\newcommand{\D}{\displaystyle}
\newtheorem{theorem}{Theorem}[section]
\newtheorem{lemma}[theorem]{Lemma}
\newtheorem{theo}[theorem]{Theorem}
\newtheorem{lem}[theorem]{Lemma}
\newtheorem{rem}[theorem]{Remark}
\newtheorem{exa}[theorem]{Example}
\newtheorem{definition}[theorem]{Definition}
\newtheorem*{Definition*}{Definition}
\numberwithin{equation}{section}
\def\qed{\hfill \ifhmode\unskip\nobreak\fi\quad\ifmmode\Box\else$\Box$\fi\\ }
\begin{document}

\title[Almost complex $S^1$-manifold with 4 fixed points]{Circle actions on almost complex manifolds with 4 fixed points}
\author{Donghoon Jang}
\address{School of Mathematics, Korea Institute for Advanced Study, 85 Hoegiro, Dongdaemun-gu, Seoul, 02455, Korea}
\email{groupaction@kias.re.kr}

\begin{abstract}
Let the circle act on a compact almost complex manifold $M$. In this paper, we classify the fixed point data of the action if there are 4 fixed points and the dimension of the manifold is at most 6. First, if $\dim M=2$, then $M$ is a disjoint union of rotations on two 2-spheres. Second, if $\dim M=4$, we prove that the action alikes a circle action on a Hirzebruch surface. Finally, if $\dim M=6$, we prove that six types occur for the fixed point data; $\mathbb{CP}^3$ type, complex quadric in $\mathbb{CP}^4$ type, Fano 3-fold type, $S^6 \cup S^6$ type, and two unknown types that might possibly be realized as blow ups of a manifold like $S^6$. When $\dim M=6$, we recover the result by Ahara \cite{A} in which the fixed point data is determined if furthermore $\mathrm{Todd}(M)=1$ and $c_1^3(M)[M] \neq 0$, and the result by Tolman \cite{T} in which the fixed point data is determined if furthermore the base manifold admits a symplectic structure and the action is Hamiltonian.
\end{abstract}

\maketitle

\tableofcontents

\section{Introduction}

$\indent$

The main purpose of this paper is to classify the fixed point data of a circle action on a compact almost complex manifold when there are four fixed points and the dimension of the manifold is at most six. An \emph{almost complex manifold} $(M,J)$ is a manifold $M$ with a smooth linear complex structure $J$ on each tangent space of $M$. From this it follows that the dimension of an almost complex manifold is necessarily even. Examples of almost complex manifolds are complex manifolds, symplectic manifolds, and K\"ahler manifolds. Therefore, our results will apply to those manifolds. An action of a group $G$ on an almost complex manifold $(M,J)$ is said to preserve the almost complex structure if the action satisfies $dg \circ J=J \circ dg$ for any $g \in G$. Let the circle $S^1$ act on a compact almost complex manifold $(M,J)$. Throughout this paper, we assume that the action preserves the almost complex structure $J$. If $p$ is an isolated fixed point, then local action of $S^1$ near $p$ is described as $g \cdot (z_1,\cdots,z_n)=(g^{w_p^1} z_1, \cdots, g^{w_p^n} z_n)$, where $g \in S^1 \subset \mathbb{C}$ and $w_p^i$ are non-zero integers, for $1 \leq i \leq n$. The $w_p^i$ are called \emph{weights} at $p$ (also called \emph{rotation numbers}). By the fixed point data, we mean a collection of multisets of weights at the fixed points. Finally, the fixed point data encodes some of the invariants of $M$; for instance it encodes the Hirzebruch $\chi_y$-genus (and hence the Euler characteristic, the signature, and the Todd genus) and Chern numbers of $M$. In addition, if the base manifold $M$ admits a symplectic structure and the action is Hamiltonian, then we can further recover (equivariant) Chern classes and (equivariant) cohomology of $M$. Therefore, to classify a manifold, one may classify the fixed point data.

Assume that the fixed point set are discrete. In this case, we can associate a multigraph to $M$, where vertices are the fixed points. Roughly speaking, edges are drawn in the following way: if a fixed point $p$ has weight $w>0$, then there exists another fixed point $p'$ that has weight $-w$; an edge $\epsilon$ is then drawn from $p$ to $p'$ with label $w$. Then the classification of the fixed point data can be done by considering possible multigraphs to $M$. The use of such multigraphs has been used in the literature; for instance, see \cite{A}(implicitly) \cite{GS}, \cite{JT}, \cite{T}. In this paper, we consider a special multigraph; see section 3 (Lemma \ref{l35}) for how we assign a multigraph to $M$.

Let the circle act on a compact almost complex manifold $M$. First, if there is one fixed point, then $M$ must be a point. Second, if there are two fixed points, then $M$ is either $S^2$, or $\dim M=6$ and the fixed point data is the same as a rotation on $S^6$ \cite{J3}, \cite{K1}, \cite{PT}. Third, if there are three fixed points, then the dimension of $M$ must be four and the fixed point data is the same as a standard $S^1$-action on $\mathbb{CP}^2$. This is announced in \cite{J3}, whose proof is essentially the same as one in \cite{J1} for symplectic case after careful treatments. However, the proof in \cite{J1} requires a lot of computations even when there are three fixed points, and this predicts that the more fixed points there are, the harder computations are. Kosniowski conjectures that the dimension of a manifold is bounded above by a linear function of the number of fixed points; it is conjectured that $\dim M \leq 4k$, where $k$ is the number of fixed points \cite{K2}. By the discussion above, the conjecture holds if there are at most three fixed points.

Now, assume that there are four fixed points. If $\dim M=2$, then $M$ is a disjoint union of rotations on two 2-spheres. This can be seen by the fact that among oriented Riemann surfaces, only the 2-sphere $S^2$ supports a circle action with fixed points, that is, a rotation. If $\dim M=4$, we shall see that the action alikes a circle action on a Hirzebruch surface; see Example \ref{e210}.

\begin{theo} \label{t11} Let the circle act on a 4-dimensional compact almost complex manifold with 4 fixed points. Then the multisets of weights are $\{a,b\}$, $\{-a,b\}$, $\{-b,c\}$, and $\{-b,-c\}$ for some positive integers $a,b,c$ such that either $a \equiv c \mod b$ or $a \equiv -c \mod b$. \end{theo}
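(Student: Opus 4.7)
The plan is to first fix the sign type of the weights at the fixed points via equivariant localization and then extract both the explicit weight values and the claimed congruence from the multigraph of isotropy spheres introduced in Lemma~\ref{l35}, combined with the Atiyah--Bott--Berline--Vergne (ABBV) identity and the integer Chern number of each invariant $2$-sphere. To begin, I would denote the four fixed points by $p_1, \ldots, p_4$ and let $N_k$ be the number with exactly $k$ negative weights, so $N_0 + N_1 + N_2 = 4$; the $S^1$-rigidity of the Hirzebruch $\chi_y$-genus gives $N_0 = N_2$. Localizing the constant class $1 \in H^*_{S^1}(M)$ via ABBV yields $\sum_{i=1}^{4} 1/(w_i^1 w_i^2) = 0$; for $(N_0, N_1, N_2) = (0,4,0)$ every summand is negative and for $(2,0,2)$ every summand is positive, so both are ruled out. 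Hence $(N_0, N_1, N_2) = (1, 2, 1)$, and I relabel so that $p_1$ has both weights positive, $p_4$ both negative, and $p_2, p_3$ are mixed.

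Next, I would appeal to Lemma~\ref{l35} to obtain a multigraph $\Gamma$ whose vertices are the fixed points, every vertex of degree $\dim_{\mathbb{C}} M = 2$, with no loops (the two poles of an invariant $S^2$ are distinct) and each edge labeled by a positive integer (the common magnitude of the paired weights). The only such graphs on four vertices are a single $4$-cycle or a disjoint union of two doubled edges. In the $4$-cycle case, the sign pattern forces both half-edges at $p_1$ (resp.\ $p_4$) to be the ``$+$''-ends (resp.\ ``$-$''-ends) of their edges; writing the labels cyclically as $(\alpha, \beta, \gamma, \delta)$ and taking the cyclic order $p_1 p_2 p_3 p_4$ (so $p_1, p_4$ are adjacent) gives
\[
p_1 = (\alpha, \delta),\quad p_2 = (-\alpha, \beta),\quad p_3 = (-\beta, \gamma),\quad p_4 = (-\gamma, -\delta),
\]
and substituting into the ABBV identity collapses it to $(\beta - \delta)(\alpha + \gamma)/(\alpha \beta \gamma \delta) = 0$, forcing $\beta = \delta =: b$. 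Setting $a := \alpha$, $c := \gamma$ then yields exactly the multisets of the theorem. The alternative cyclic order (with $p_1, p_4$ non-adjacent) and the two-$2$-cycle case are handled by the same method, the latter forcing $a = c$.

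Finally, each edge of $\Gamma$ realizes an embedded invariant $2$-sphere $S \subset M$ whose complex normal bundle has integer first Chern number; computing it by ABBV on $S$ itself shows that the normal weights at the two endpoints of $S$ differ by an integer multiple of the edge label. Applied to one of the two $b$-labeled edges in the $4$-cycle---the edge $p_1 p_4$ with normal weights $a$ and $-c$ gives $(a + c)/b \in \mathbb{Z}$, while in the non-adjacent arrangement the edge $p_1 p_3$ with normal weights $a$ and $c$ gives $(a - c)/b \in \mathbb{Z}$---this delivers the desired congruence $a \equiv -c \pmod{b}$ or $a \equiv c \pmod{b}$, respectively. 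The main obstacle is the multigraph case analysis in the second paragraph: enumerating the possibilities on four vertices and verifying that ABBV singles out the correct pair of equal labels in each; a secondary subtlety is the handling of weights $w$ at $p$ that divide the other weight (so the naive $\mathbb{Z}_w$-fixed component is larger than $2$-dimensional), which is precisely what the construction of Lemma~\ref{l35} accommodates.
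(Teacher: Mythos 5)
Your overall route coincides with the paper's: establish $N^0=N^2=1$ and $N^1=2$, reduce to a loopless multigraph via Lemma~\ref{l35}, use the ABBV identity of Lemma~\ref{l29} to force two of the four labels to coincide, and extract the congruence from the isotropy $2$-sphere in $M^{\mathbb{Z}_b}$ (the paper invokes Lemma~\ref{l25} where you compute $c_1$ of the normal bundle; these are equivalent). Your use of the sign of $\sum_p 1/(w_p^1w_p^2)$ to rule out $(N^0,N^1,N^2)=(0,4,0)$ and $(2,0,2)$ is a clean substitute for the paper's appeal to Lemma~\ref{l24}, and your two $4$-cycle computations reproduce Lemmas~\ref{l42} and~\ref{l43}.

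The genuine gap is your treatment of the two-doubled-edge configuration. There the weights are $\{\alpha,\delta\}$, $\{\beta,-\gamma\}$, $\{-\beta,\gamma\}$, $\{-\alpha,-\delta\}$, and the ABBV identity collapses to $2/(\alpha\delta)-2/(\beta\gamma)=0$, i.e.\ only $\alpha\delta=\beta\gamma$; it does not force $a=c$ or anything resembling the theorem's form. For instance the assignment $\{1,6\},\{2,-3\},\{-2,3\},\{-1,-6\}$ satisfies the ABBV identity, the $\pm w$ pairing of Lemma~\ref{l23}, and $(N^0,N^1,N^2)=(1,2,1)$, yet is not of the stated form. This configuration must be excluded outright, and the tool is exactly condition (1) of Lemma~\ref{l35} (equivalently Lemma~\ref{l31}): the smallest positive weight must label an edge $\epsilon$ with $n_{t(\epsilon)}=n_{i(\epsilon)}+1$, whereas in a doubled-edge graph every edge joins vertices whose $n_p$ differ by $0$ or $2$. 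This is precisely how the paper kills the case (``there cannot be two edges between $p_1$ and $p_4$''). A secondary, patchable point: an edge drawn by condition (1) of Lemma~\ref{l35} (small label) is purely combinatorial and need not be realized by an embedded invariant sphere joining its two endpoints, so for the congruence you should argue directly with the connected component of $M^{\mathbb{Z}_b}$ through $p_1$ (after passing to an effective action and discarding the trivial case $b=1$); it is a $2$-sphere whose other fixed point is one of the two vertices carrying a $-b$, which is exactly why the conclusion is the disjunction $a \equiv c \mod b$ or $a \equiv -c \mod b$ rather than a single congruence.
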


\begin{figure}
\centering
\begin{subfigure}[b][6cm][s]{.45\textwidth}
\centering
\vfill
\begin{tikzpicture}[state/.style ={circle, draw}]
\node[state] (A) {$p_1$};
\node[state] (B) [above left=of A] {$p_2$};
\node[state] (C) [above right=of A] {$p_3$};
\node[state] (D) [above right=of B] {$p_4$};
\path (A) [->] edge node[right] {$a$} (B);
\path (A) [->] edge node [right] {$b$} (C);
\path (B) [->] edge node [right] {$b$} (D);
\path (C) [->] edge node [right] {$c$} (D);
\end{tikzpicture}
\vfill
\caption{Case 1: $a \equiv c \mod b$}\label{fig1-1}
\vspace{\baselineskip}
\end{subfigure}\qquad
\begin{subfigure}[b][6cm][s]{.45\textwidth}
\centering
\vfill
\begin{tikzpicture}[state/.style ={circle, draw}]
\node[state] (A) {$p_1$};
\node[state] (B) [above left=of A] {$p_2$};
\node[state] (C) [above =of B] {$p_3$};
\node[state] (D) [above right=of C] {$p_4$};
\path (A) [->] edge node[right] {$a$} (B);
\path (A) [->] edge node [right] {$b$} (D);
\path (B) [->] edge node [right] {$b$} (C);
\path (C) [->] edge node [right] {$c$} (D);
\end{tikzpicture}
\vfill
\caption{Case 2: $a \equiv -c \mod b$}\label{fig1-2}
\end{subfigure}
\caption{Multigraphs for Theorem \ref{t11}.}\label{fig1}
\end{figure}
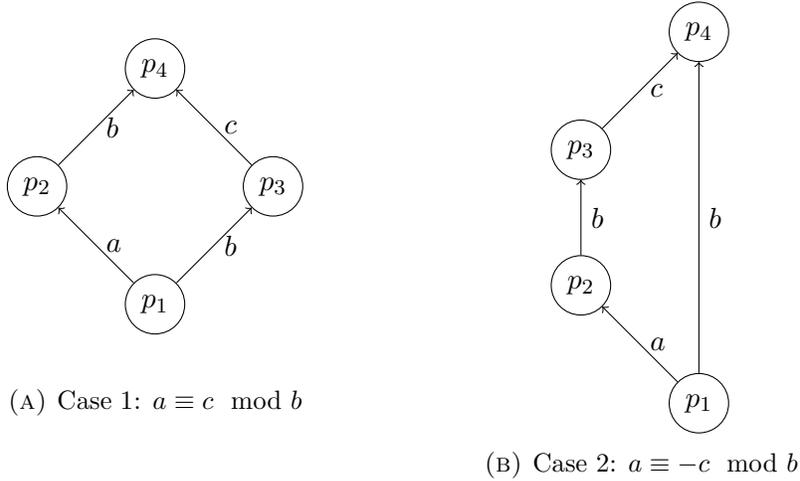

In Theorem \ref{t11}, in the case that $a \equiv c \mod b$, the multigraph associated to $M$ is provided in Figure \ref{fig1-1}. In the case that $a \equiv -c \mod b$, the corresponding multigraph is Figure \ref{fig1-2}.

Assume that the dimension of $M$ is six. In \cite{A}, Ahara considers the classification of the fixed point data if in addition, $M$ satisfies $\mathrm{Todd}(M)=1$ and $c_1^3(M)[M] \neq 0$, where $\mathrm{Todd}(M)$ is the Todd genus, $c_1(M)$ is the first Chern class, and $[M]$ is the fundamental class of $M$. Note that the Todd genus is the genus belonging to the power series $\displaystyle \frac{x}{1-e^{-x}}$. In this case, Ahara shows that three types of the fixed point data occur; $\mathbb{CP}^3$ type, complex quadric in $\mathbb{CP}^4$ type, and Fano 3-fold type. In \cite{T}, Tolman classifies the fixed point data if $M$ admits a symplectic structure and the action is Hamiltonian. In this paper, we shall discuss the problem without those additional assumptions. Therefore our result covers the two results known. When $\dim M=6$, our result is the following:

\begin{theo} \label{t12} Let the circle act on a 6-dimensional compact almost complex manifold with 4 fixed points. Then exactly one of the following holds:
\begin{enumerate}[(1)]
\item $\mathrm{Todd}(M)=1$ and the multisets of weights are $\{a,b,c\}$, $\{-a,b-a,c-a\}$, $\{-b,a-b,c-b\}$, $\{-c,a-c,b-c\}$ for some mutually distinct positive integers $a,b,c$.
\item $\mathrm{Todd}(M)=1$ and the multisets of weights are $\{a,a+b,a+2b\}$, $\{-a,b,a+2b\}$, $\{-a-2b,-b,a\}$, $\{-a-2b,-a-b,-a\}$ for some positive integers $a,b$.
\item $\mathrm{Todd}(M)=1$ and the multisets of weights are $\{1,2,3\}$, $\{-1,1,a\}$, $\{-1,-a,1\}$, $\{-1,-2,-3\}$ for some positive integer $a$. 
\item $\mathrm{Todd}(M)=0$ and the multisets of weights are $\{-a-b,a,b\}$, $\{-c-d,c,d\}$, $\{-a,-b,a+b\}$, $\{-c,-d,c+d\}$ for some positive integers $a,b,c,d$.
\item $\mathrm{Todd}(M)=0$ and the multisets of weights are $\{-3a-b,a,b\}$, $\{-2a-b,3a+b,3a+2b\}$, $\{-a,-a-b,2a+b\}$, $\{-b,-3a-2b,a+b\}$ for some positive integers $a,b$, by reversing the circle action if necessary.
\item $\mathrm{Todd}(M)=0$ and the multisets of weights are $\{-a-b,2a+b,b\}$, $\{-2a-b,a,b\}$, $\{-b,-2a-b,a+b\}$, $\{-a,-b,2a+b\}$ for some positive integers $a,b$.
\end{enumerate} \end{theo}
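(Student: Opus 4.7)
The plan is to reduce the classification to a small combinatorial enumeration, combining rigidity of the equivariant $\chi_y$-genus for almost complex $S^1$-manifolds, Chern-number identities from Atiyah--Bott--Berline--Vergne (ABBV) localization, and the multigraph structure of Lemma \ref{l35}.

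First, for each fixed point $p$ let $N^-(p)$ denote the number of negative weights at $p$ and set $n_k=|\{p:N^-(p)=k\}|$, so that $n_0+n_1+n_2+n_3=4$. Rigidity of $\chi_y$ gives $\chi_y(M)=\sum_k n_k(-y)^k$; applying the same identity to the reversed circle action (under which $N^-(p)\mapsto 3-N^-(p)$) forces the symmetry $n_0=n_3$ and $n_1=n_2$. Hence only three sign profiles are a priori possible, corresponding to $\mathrm{Todd}(M)=n_0\in\{0,1,2\}$.

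Second, I would rule out $\mathrm{Todd}(M)=2$. In that profile, two ``source'' vertices have all positive weights and two ``sink'' vertices have all negative weights, and by Lemma \ref{l35} every weight at a source must be matched with its negative at a sink, producing a bipartite $3$-regular multigraph on $4$ vertices. I would then invoke the ABBV Chern-number formulas
\[
c_1c_2[M]=24\,\mathrm{Todd}(M)=48,\qquad c_1^3[M]=\sum_p\frac{(w_p^1+w_p^2+w_p^3)^3}{w_p^1w_p^2w_p^3}.
\]
An AM--GM estimate on the second expression already yields $c_1^3[M]\geq 108$, and a short combinatorial check on the bipartite multigraph---using the weight congruences imposed by each invariant $2$-sphere and the constraint $c_1c_2[M]=48$---shows that no assignment of positive integer weights is consistent, ruling out this profile.

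Third, in the remaining cases $\mathrm{Todd}(M)\in\{0,1\}$ I would enumerate the multigraphs explicitly. For $\mathrm{Todd}(M)=1$ the profile is $(1,1,1,1)$, so there is a unique source $p_1$ and a unique sink $p_4$. Labelling the weights at $p_1$ as $\{a,b,c\}$, I would trace where each of $a,b,c$ reappears (as the negative of a weight) among $p_2,p_3,p_4$; combined with $c_1c_2[M]=24$ and the congruence relations coming from $S^1$-invariant $4$-dimensional subvarieties, to which Theorem \ref{t11} applies, this singles out the three types (1)--(3). For $\mathrm{Todd}(M)=0$ the profile is $(0,2,2,0)$ and there is no source or sink; analogous matching of the positive weights of the two $N^-=1$ vertices to the negative weights of the two $N^-=2$ vertices, together with the constraint $c_1c_2[M]=0$, yields the three types (4)--(6).

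The main obstacle will be the combinatorial case analysis in the third step: even after fixing the sign profile, many candidate label assignments survive, and one must carefully pair positive weights with their negatives while enforcing the congruence conditions coming from Theorem \ref{t11}. I expect this bookkeeping, rather than any single clever identity, to occupy the bulk of the proof; the ABBV Chern-number identities serve as filters cutting the enumeration down to the six types listed.
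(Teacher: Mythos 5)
Your outline reproduces the paper's overall architecture (reduce to sign profiles via rigidity of $\chi_y$, associate a multigraph as in Lemma \ref{l35}, then enumerate), but as it stands it has a genuine gap: essentially all of the content of the theorem is deferred to a ``combinatorial check'' that you do not carry out and whose feasibility with your chosen tools is doubtful. Concretely, the two identities you propose as filters, $c_1c_2[M]=24\,\mathrm{Todd}(M)$ and the localization formula for $c_1^3[M]$, are strictly weaker than what the classification actually requires. The paper's case analysis (Lemmas \ref{l53}--\ref{l56} and \ref{l64}--\ref{l69}) repeatedly uses the \emph{full} rational-function identity $\chi^0(M)=N^0$ from Theorem \ref{t22} --- comparing lowest-order and highest-order exponents (this is isolated as Lemma \ref{l63}), and in places matching individual coefficients such as $t^b$ in an infinite series expansion --- together with the component-counting statement Lemma \ref{l26} applied to isotropy subgroups $\mathbb{Z}_w$ for weights $w$ that are \emph{not} the largest, and the structural fact (Lemmas \ref{l52}, \ref{l62}) that the component of $M^{\mathbb{Z}_l}$ through a fixed point with the largest weight $l$ is $2$-dimensional. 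None of these appears in your plan, and without them the enumeration does not close up: two scalar Chern numbers plus the congruences of Lemma \ref{l25} leave infinitely many candidate weight assignments in several of the multigraph cases.

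A secondary point: your route for excluding the profile $\mathrm{Todd}(M)=2$ (i.e.\ $N^0=N^3=2$, $N^1=N^2=0$) via an AM--GM bound $c_1^3[M]\ge 108$ plus $c_1c_2[M]=48$ is both incomplete --- the inequality alone contradicts nothing, and the asserted ``short check'' is not exhibited --- and unnecessary. The paper disposes of this profile in one line using Lemma \ref{l24}: there must exist $i$ with $N^i\neq 0$ and $N^{i+1}\neq 0$, which fails for $(2,0,0,2)$. On the positive side, your derivation of $N^0=N^3$ and $N^1=N^2$ by applying rigidity to the reversed action is a legitimate alternative to reading the symmetry $(-1)^iN^i=(-1)^iN^{n-i}$ directly off Theorem \ref{t22}, and your observation that the invariant $4$-dimensional isotropy submanifolds are governed by Theorem \ref{t11} is in the spirit of how Lemma \ref{l26} is used; but these remarks do not substitute for the case-by-case work that constitutes the proof.
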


\begin{figure}
\centering
\begin{subfigure}[b][7cm][s]{.3\textwidth}
\centering
\vfill
\begin{tikzpicture}[state/.style ={circle, draw}]
\node[state] (a) {$p_1$};
\node[state] (b) [above=of a] {$p_2$};
\node[state] (c) [above=of b] {$p_3$};
\node[state] (d) [above=of c] {$p_4$};
\path (a) [->] edge node[right] {$a$} (b);
\path (a) [->] [bend left =40] edge node [right] {$b$} (c);
\path (a) [->] [bend left =40]edge node [right] {$c$} (d);
\path (b) [->] edge node [right] {$b-a$} (c);
\path (b) [->] [bend right =40]  edge node [right] {$c-a$} (d);
\path (c) [->] edge node [right] {$c-b$} (d);
\end{tikzpicture}
\vfill
\caption{$\mathbb{CP}^3$ type}\label{fig2-1}
\end{subfigure}
\begin{subfigure}[b][7cm][s]{.3\textwidth}
\centering
\vfill
\begin{tikzpicture}[state/.style ={circle, draw}]
\node[state] (a) {$p_1$};
\node[state] (b) [above=of a] {$p_2$};
\node[state] (c) [above=of b] {$p_3$};
\node[state] (d) [above=of c] {$p_4$};
\path (a) [->] edge node[right] {$a$} (b);
\path (a) [->] [bend left =40] edge node [left] {$a+2b$} (c);
\path (a) [->] [bend left =40]edge node [left] {$a+b$} (d);
\path (b) [->] edge node [right] {$b$} (c);
\path (b) [->] [bend right =40]  edge node [right] {$a+2b$} (d);
\path (c) [->] edge node [right] {$a$} (d);
\end{tikzpicture}
\vfill
\caption{Complex quadric}\label{fig2-2}
\vspace{\baselineskip}
\end{subfigure}\qquad
\begin{subfigure}[b][7cm][s]{.3\textwidth}
\centering
\vfill
\begin{tikzpicture}[state/.style ={circle, draw}]
\node[state] (a) {$p_1$};
\node[state] (b) [above=of a] {$p_2$};
\node[state] (c) [above=of b] {$p_3$};
\node[state] (d) [above=of c] {$p_4$};
\path (a) [->] [bend left =40] edge node[right] {$2$} (d);
\path (a) [->] [bend left =20] edge node [right] {$3$} (d);
\path (a) [->] edge node [right] {$1$} (b);
\path (b) [->] [bend left=20] edge node [right] {$1$} (c);
\path (b) [->] [bend right =20]  edge node [right] {$a$} (c);
\path (c) [->] edge node [right] {$1$} (d);
\end{tikzpicture}
\vfill
\caption{Fano 3-fold type}\label{fig2-3}
\vspace{\baselineskip}
\end{subfigure}\qquad
\begin{subfigure}[b][3.5cm][s]{.27\textwidth}
\centering
\vfill
\begin{tikzpicture}[state/.style ={circle, draw}]
\node[state] (a) {$p_1$};
\node[state] (b) [right=of a] {$p_2$};
\node[state] (c) [above=of a] {$p_3$};
\node[state] (d) [right=of c] {$p_4$};
\path (a) [->] [bend left =10]edge node[left] {$a$} (c);
\path (a) [->] [bend left =40]edge node[left] {$b$} (c);
\path (c) [->] [bend left =10]edge node[right] {$a+b$} (a);
\path (b) [->] edge node[left] {$d$} (d);
\path (b) [->] [bend left =30]edge node[left] {$c$} (d);
\path (d) [->] [bend left =20]edge node[right] {$c+d$} (b);
\end{tikzpicture}
\vfill
\caption{$S^6 \cup S^6$ type}\label{fig2-4}
\vspace{\baselineskip}
\end{subfigure}\qquad
\begin{subfigure}[b][3.5cm][s]{.27\textwidth}
\centering
\vfill
\begin{tikzpicture}[state/.style ={circle, draw}]
\node[state] (a) {$p_1$};
\node[state] (b) [right=of a] {$p_2$};
\node[state] (c) [above=of a] {$p_3$};
\node[state] (d) [right=of c] {$p_4$};
\path (b) [->] edge node[below] {$3a+b$} (a);
\path (a) [->] edge node[pos=.2, right, sloped, rotate=270] {$b$} (d);
\path (a) [->]edge node [left] {$a$} (c);
\path (b) [->] edge node [right] {$3a+2b$} (d);
\path (c) [->]edge node[pos=.3, right, sloped, rotate=270] {$2a+b$} (b);
\path (d) [->]edge node [above] {$a+b$} (c);
\end{tikzpicture}
\vfill
\caption{Unknown-I}\label{fig2-5}
\vspace{\baselineskip}
\end{subfigure}\qquad
\begin{subfigure}[b][3.5cm][s]{.27\textwidth}
\centering
\vfill
\begin{tikzpicture}[state/.style ={circle, draw}]
\node[state] (a) {$p_1$};
\node[state] (b) [right=of a] {$p_2$};
\node[state] (c) [above=of a] {$p_3$};
\node[state] (d) [right=of c] {$p_4$};
\path (a) [->] [bend right =20]edge node[left] {$b$} (c);
\path (a) [->] edge node[below] {$2a+b$} (b);
\path (c) [->] [bend right =20]edge node[left] {$a+b$} (a);
\path (b) [->] [bend left =20]edge node[right] {$a$} (d);
\path (b) [->] [bend right=20]edge node[right] {$b$} (d);
\path (d) [->] edge node[above] {$2a+b$} (c);
\end{tikzpicture}
\vfill
\caption{Unknown-II}\label{fig2-6}
\vspace{\baselineskip}
\end{subfigure}\qquad
\caption{Multigraphs for Theorem \ref{t12}.}\label{fig2}
\end{figure}
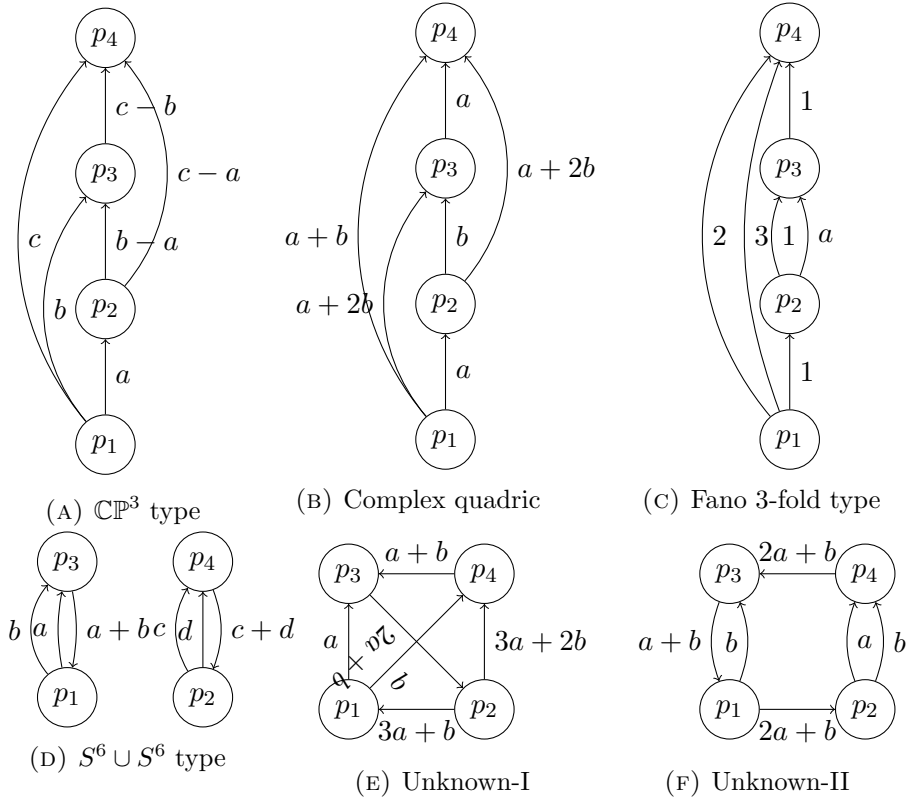

In each case, the multigraph associated to $M$ is given in Figure \ref{fig2}. In each case, let us call the manifold $\mathbb{CP}^3$ type, complex quadric in $\mathbb{CP}^4$ type, Fano 3-fold type, $S^6 \cup S^6$ type, unknown type I, and unknown type II, respectively. The reason is because in each case the fixed point data is the same as that of the manifold named with an appropriate action; see Examples \ref{e211}, \ref{e212}, \ref{e213}, and \ref{e214} for examples of manifolds of the first, second, third, and fourth type respectively. Note that an example of a manifold of the third type is known only for $a=4$ and 5. On the other hand, in the third case, if $M$ inherits a symplectic structure and the circle action preserves the symplectic structure, then the action is Hamiltonian and furthermore, the only possible values for $a$ are 4 and 5; see \cite{T}. Therefore, we have all examples of an action if $\mathrm{Todd}(M)=1$, $M$ admits a symplectic structure, and the action preserves the symplectic form. On the other hand, to the author's knowledge, a manifold with a fixed point data as the fifth type or the sixth type is not known. However, the fixed point data of the fifth type and the sixth type might be realized as a blowing up of a manifold $M$ at a fixed point or at an isotropy 2-sphere, where $M$ admits a circle action with 2 fixed points (like $S^6$). For the discussion, see Remark \ref{r215}.

The paper is organized as follows. For the description, consider a circle action on a compact almost complex manifold $M$ with a discrete fixed point set. In Section 2, we recall background needed and set up notations. In Section 3, we associate a labelled, directed multigraph to $M$. Our way of drawing edges is slightly different from others (see Lemma \ref{l35}), and this simplifies the proofs of Theorem \ref{t11} and Theorem \ref{t12}. In Section 4, we classify the fixed point data of $M$ when $\dim M=4$ and there are 4 fixed points. Section 5-6 are for the classification of the fixed point data of $M$ where $\dim M=6$ and there are 4 fixed points. When $\dim M=6$, either $\mathrm{Todd}(M)=1$ or $\mathrm{Todd}(M)=0$. These cases are dealt seperately in Section 5 and 6. Finally, the proof of Theorem \ref{t12} is given in Section 7.

\section{Background and Notation}

Let the circle act on a $2n$-dimensional almost complex manifold $M$. Let $p$ be an isolated fixed point. There are $n$ non-zero integers $w_p^i$, called \emph{weights}, associated to $p$, for $1 \leq i \leq n$. Denote by $\Sigma_p=\{w_p^1,\cdots,w_p^n\}$ the multiset of weights at $p$ and $n_p$ the number of negative weights at $p$. To prove Theorem \ref{t12}, we shall use the classification when the number of fixed points is at most three.

\begin{theorem} \label{t21} \cite{J3} Let the circle act on a compact almost complex manifold $M$.
\begin{enumerate}[(1)]
\item If there is one fixed point, then $M$ is a point.
\item If there are two fixed points, then either
\begin{enumerate}[(a)]
\item $M$ is the 2-sphere and weights at the fixed points are $a$ and $-a$ for some positive integer $a$, or
\item $\dim M=6$ and weights at the fixed points are $\{-a-b,a,b\}$ and $\{-a,-b,a+b\}$ for some positive integers $a$ and $b$.
\end{enumerate}
\item If there are three fixed points, then $\dim M=4$ and weights at the fixed points are $\{a+b,a\}$, $\{-a,b\}$, and $\{-b,-a-b\}$ for some positive integers $a$ and $b$.
\end{enumerate}
\end{theorem}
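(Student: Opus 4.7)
The plan is to apply the Atiyah--Bott--Berline--Vergne (ABBV) localization formula together with the rigidity of the Hirzebruch $\chi_y$-genus. For a compact almost complex $S^1$-manifold with discrete fixed set, Kosniowski's formula reads
\[\chi_y(M)\;=\;\sum_p(-y)^{n_p},\]
and Serre duality $\chi_y(M)=(-y)^n\chi_{1/y}(M)$ forces the distribution $\{n_p\}$ to be invariant under $k\mapsto n-k$, where $n=\dim_{\mathbb{C}}M$. In parallel, for any equivariant class $\alpha\in H^{2k}_{S^1}(M)$ with $k<n$ the pushforward $\int_M\alpha$ lies in $H^{2k-2n}(BS^1)=0$; taking $\alpha$ to be (appropriate powers of) the equivariant Chern class $c_j^{S^1}(TM)$ then yields, via ABBV,
\[\sum_p\frac{e_j(w_p^1,\ldots,w_p^n)}{\prod_i w_p^i}\;=\;0\qquad\text{for every } 0\leq j<n,\]
where $e_j$ is the $j$-th elementary symmetric polynomial.

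Part (1) is immediate from the $j=0$ identity: a single fixed point would make $1/\prod_i w_p^i=0$, impossible, so $M$ must be $0$-dimensional. For parts (2) and (3), I would add the matching-edge lemma: each positive weight $+w$ at a fixed point is paired, via the $\mathbb{Z}_w$-fixed almost complex $2$-sphere through it, with a $-w$ at some other fixed point. With this, the two-fixed-point case proceeds as follows. The $\chi_y$-symmetry leaves only the distributions $\{0,n\}$ and, when $n$ is even, $\{n/2,n/2\}$. For $n=1$ one recovers $M=S^2$; for $n=2$ the $j=0$ identity in either subcase becomes a sum of two same-sign terms, contradiction. For $n=3$ the matching-edge lemma reduces to the two cases $\{0,3\}$ and $\{1,2\}$: in the former, the $j=1$ identity collapses to $2(a+b+c)/(abc)=0$, impossible; in the latter it forces $c=a+b$, giving exactly $\{-a-b,a,b\}$ and $\{-a,-b,a+b\}$. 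Higher $n\geq 4$ is ruled out by the same bookkeeping: for each surviving symmetric distribution the $j$-identities either acquire definite sign or pin down weight coincidences inconsistent with the matching-edge lemma.

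Part (3) follows analogously. Symmetry of $\{n_{p_1},n_{p_2},n_{p_3}\}$ under $k\mapsto n-k$ forces one $n_p=n/2$ (so $n$ is even) with the other two swapped; matching-edges and the $j$-identities then rule out $n\geq 4$ and leave only $n=2$ with $\{n_p\}=\{0,1,2\}$, from which the weights $\{a+b,a\}$, $\{-a,b\}$, $\{-b,-a-b\}$ are forced. The main obstacle throughout is the case analysis in part (2) for larger $n$: several admissible distributions $\{n_p\}$ survive the $\chi_y$-symmetry, each producing its own system of ABBV identities, and verifying one by one that none admits a positive-integer solution---while keeping careful track of multiplicities in the matching-edge lemma---is where the bulk of the work concentrates. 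The matching-edge lemma itself requires checking that the $\mathbb{Z}_w$-fixed components inherit an almost complex structure with a residual circle action, which is standard but must be invoked explicitly.
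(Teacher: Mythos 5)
First, note that the paper does not prove Theorem \ref{t21} at all: it is imported verbatim from \cite{J3} (and, for two fixed points, ultimately rests on \cite{K1}, \cite{PT}). So there is no in-paper proof to match; your proposal has to stand on its own, and it does not. The tools you list are the right ones (they are Theorem \ref{t22}, Lemma \ref{l23}, Lemma \ref{l29} of this paper), and your treatment of part (1), of $n=1,2,3$ in part (2), and of $n=2$ in part (3) is essentially correct. But the entire content of the theorem is the dimension restriction --- that two fixed points force $\dim M\in\{2,6\}$ and three fixed points force $\dim M=4$ --- and for this you offer only the sentence that higher $n$ ``is ruled out by the same bookkeeping.'' That is not a proof, and it is precisely where the difficulty lies (the paper itself remarks that the three-fixed-point case ``requires a lot of computations''). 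Concretely: for two fixed points in dimension $2n$ with $n$ odd, $n\geq 5$, the symmetric distribution $\{n_{p_1},n_{p_2}\}=\{k,n-k\}$ makes the $j=0$ identity hold automatically, the even-$j$ identities hold automatically, and the odd-$j$ identities only give $e_j(\Sigma_{p_1})=0$ for odd $j<n$. Already for $n=5$, $\{n_p\}=\{2,3\}$, this reduces to the Diophantine system $a_1+a_2=b_1+b_2+b_3$ and $a_1a_2\,e_1(b)-(a_1+a_2)e_2(b)+e_3(b)=0$, which has no evident sign obstruction and is not dispatched by ``definite sign or weight coincidences.'' The actual arguments in \cite{PT} and \cite{J3} do not proceed this way: they analyze the isotropy submanifolds $M^{\mathbb{Z}_w}$ for the largest weight and induct, using the full $t$-dependent rigidity of Theorem \ref{t22} rather than only the residue identities. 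Likewise, for three fixed points in dimension $8$ the distribution $\{1,2,3\}$ survives both the symmetry $N^i=N^{n-i}$ and the consecutiveness constraint of Lemma \ref{l24}, and you give no argument against it.

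Two smaller points. Your ``matching-edge lemma'' is stated too strongly: the component of $M^{\mathbb{Z}_w}$ through $p$ need not be a $2$-sphere (it is $2m$-dimensional when $p$ has $m$ weights divisible by $w$), and the correct global statement (Lemma \ref{l23}) matches total multiplicities of $+w$ and $-w$ over all fixed points, with no canonical pairing and no guarantee that the $-w$ sits at a \emph{different} fixed point; your case analyses implicitly assume both. Finally, you never invoke the fact that some $N^i$ and $N^{i+1}$ are simultaneously nonzero (Lemma \ref{l24}), which would immediately kill all even $n$ in part (2) and the distribution $\{0,n/2,n\}$ in part (3); using it would shorten your low-dimensional checks, but it still leaves the odd-dimensional two-fixed-point cases and the $\{1,2,3\}$ three-fixed-point case open, so the gap remains.
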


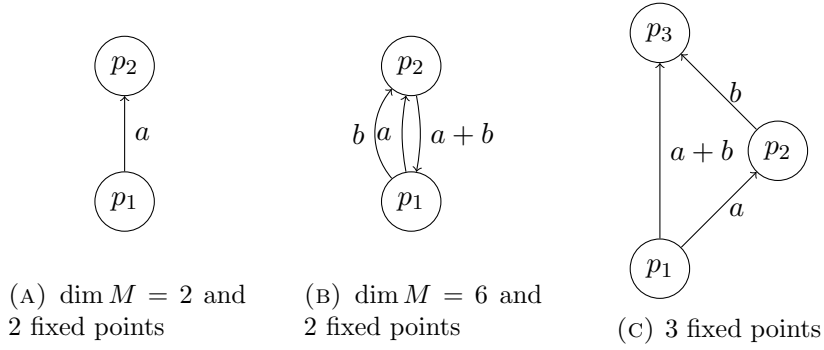
\begin{figure}
\begin{subfigure}[b][5cm][s]{.25\textwidth}
\centering
\vfill
\begin{tikzpicture}[state/.style ={circle, draw}]
\node[state] (a) {$p_1$};
\node[state] (b) [above=of a] {$p_2$};
\path (a) [->]edge node [right] {$a$} (b);
\end{tikzpicture}
\vfill
\caption{$\dim M=2$ and 2 fixed points}\label{fig3-1}
\vspace{\baselineskip}
\end{subfigure}\qquad
\begin{subfigure}[b][5cm][s]{.25\textwidth}
\centering
\vfill
\begin{tikzpicture}[state/.style ={circle, draw}]
\node[state] (a) {$p_1$};
\node[state] (b) [above=of a] {$p_2$};
\path (a) [->] [bend left =10]edge node[left] {$a$} (b);
\path (a) [->] [bend left =40]edge node[left] {$b$} (b);
\path (b) [->] [bend left =10]edge node[right] {$a+b$} (a);
\end{tikzpicture}
\vfill
\caption{$\dim M=6$ and 2 fixed points}\label{fig3-2}
\vspace{\baselineskip}
\end{subfigure}\qquad
\begin{subfigure}[b][5cm][s]{.25\textwidth}
\centering
\vfill
\begin{tikzpicture}[state/.style ={circle, draw}]
\node[state] (a) {$p_1$};
\node[state] (b) [above right=of a] {$p_2$};
\node[state] (c) [above left=of b] {$p_3$};
\path (a) [->] edge node[right] {$a$} (b);
\path (b) [->] edge node[right] {$b$} (c);
\path (a) [->]edge node [right] {$a+b$} (c);
\end{tikzpicture}
\vfill
\caption{3 fixed points}\label{fig3-3}
\vspace{\baselineskip}
\end{subfigure}\qquad
\caption{Multigraphs for Theorem \ref{t21}.}\label{fig3}
\end{figure}

In each case, the corresponding multigraph is given in Figure \ref{fig3}. Let $N^i$ be the number of fixed points with $n_p=i$. Denote by $\chi_y(M)$ the Hirzebruch $\chi_y$-genus of $M$, where the Hirzebruch $\chi_y$-genus is the genus belonging to the power series $\frac{x(1+ye^{-x(1+y)})}{1+e^{-x(1+y)}}$. In \cite{L}, Li proves that the Hirzebruch $\chi_y$-genus is rigid under a circle action having a discrete fixed point set.

\begin{theorem} \label{t22} \cite{L} Let the circle act on a $2n$-dimensional compact almost complex manifold $M$ with isolated fixed points. For each integer $i$ such that $0 \leq i \leq n$,
\begin{center}
$\displaystyle \chi^i(M)=\sum_{p \in M^{S^1}} \frac{\sigma_i (t^{w_p^1}, \cdots, t^{w_p^n})}{\prod_{j=1}^n (1-t^{w_p^j})} = (-1)^i N^i = (-1)^i N^{n-i}$,
\end{center}
where $t$ is an indeterminate, $\sigma_i$ is the $i$-th elementary symmetric polynomial in $n$ variables, and $\chi_y(M)=\sum_{i=0}^n \chi^i(M) \cdot y^i$. \end{theorem}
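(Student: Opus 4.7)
The plan is to prove this via the Atiyah--Bott--Segal--Singer equivariant index theorem combined with a rigidity argument based on asymptotic analysis at $t=0$ and $t=\infty$. First, one realizes the equivariant $\chi^i$-genus $\chi^i(M)(t)$ as the equivariant index of the Spin$^{\mathbb{C}}$--Dirac operator twisted by $\Lambda^i T^*M$ (reproducing the Dolbeault index when $J$ is integrable). Since the index of an elliptic operator on a compact manifold is a finite-dimensional virtual $S^1$-representation, its character satisfies $\chi^i(M)(t)\in\mathbb{Z}[t,t^{-1}]$; that is, it is a Laurent polynomial in $t$. Applying the Atiyah--Bott--Segal--Singer fixed point formula then produces the first equality: at an isolated fixed point $p$, the induced $S^1$-action on $\Lambda^i T_p^*M$ has trace $\sigma_i(t^{w_p^1},\ldots,t^{w_p^n})$ (after standard sign and duality conventions), while the equivariant Euler class of the normal bundle contributes $\prod_{j=1}^n(1-t^{w_p^j})$ in the denominator, so summing over $p\in M^{S^1}$ gives the middle expression.

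The heart of the argument is to show this Laurent polynomial is constant and to identify the constant. I would analyze the summand
\[
F_p(t)=\frac{\sigma_i(t^{w_p^1},\ldots,t^{w_p^n})}{\prod_{j=1}^n(1-t^{w_p^j})}
\]
at each fixed point $p$ as $t\to 0$. Splitting the denominator factors by the sign of $w_p^j$, one sees that the leading behavior of $\prod_j(1-t^{w_p^j})$ as $t\to 0$ is $(-1)^{n_p}t^{\sum_{w_p^j<0}w_p^j}$. Comparing this with the minimum-exponent term of the numerator $\sigma_i$ (achieved by choosing the subset $S$ of size $i$ whose weights are the $i$ smallest at $p$), an elementary case analysis on whether $n_p<i$, $n_p=i$, or $n_p>i$ shows the exponent in the ratio is always nonnegative and vanishes if and only if $n_p=i$; in that case the minimum is realized uniquely by $S=\{j:w_p^j<0\}$, so $\lim_{t\to 0}F_p(t)=(-1)^i$, and otherwise the limit is $0$. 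Summing yields $\lim_{t\to 0}\chi^i(M)(t)=(-1)^i N^i$, and a symmetric analysis as $t\to\infty$ (with positive weights now dominating the denominator) gives $\lim_{t\to\infty}\chi^i(M)(t)=(-1)^i N^{n-i}$.

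Finally, a Laurent polynomial in $\mathbb{Z}[t,t^{-1}]$ with finite limits at both $t=0$ and $t=\infty$ has no negative-power terms (from the first limit) and no positive-power terms (from the second), hence must be constant. Equating the two limits then gives $\chi^i(M)=(-1)^i N^i=(-1)^i N^{n-i}$, completing the proof. The main obstacle is the careful setup of equivariant index theory in the purely almost complex (not necessarily integrable) setting: the Dolbeault complex need not be elliptic, so one must work instead with the Spin$^{\mathbb{C}}$--Dirac operator, and one has to verify the precise sign and duality conventions in the Atiyah--Bott formula to match the stated expression. The asymptotic bookkeeping itself is elementary but demands treating the three cases $n_p<i$, $n_p=i$, and $n_p>i$ separately to see that the minimum-exponent contribution cancels exactly against the denominator only in the middle case.
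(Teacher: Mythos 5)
The paper does not prove this statement at all: it is quoted verbatim from Li's paper \cite{L}, so there is no internal proof to compare against. Your proposal correctly reproduces the standard rigidity argument that Li gives there --- realize $\chi^i(M)(t)$ as the equivariant index of a Dolbeault-type (Spin$^{\mathbb{C}}$--Dirac) operator so that it is a Laurent polynomial in $t$, apply the Atiyah--Bott--Segal--Singer fixed point formula to obtain the middle expression, and then show by the $t\to 0$ and $t\to\infty$ asymptotics that the Laurent polynomial is constant, with the two limits identifying the constant as $(-1)^iN^i$ and $(-1)^iN^{n-i}$ respectively. Your case analysis on $n_p<i$, $n_p=i$, $n_p>i$ is exactly the right bookkeeping, and your caveat about needing the Spin$^{\mathbb{C}}$--Dirac operator rather than the (possibly non-elliptic) Dolbeault complex in the non-integrable case is precisely the point of Li's ``Dolbeault-type operators,'' so the proposal is essentially the same proof as the cited source.
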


The Hirzebruch $\chi_y$-genus of a manifold $M$ encodes three invariants of $M$; $\chi_0(M)=\mathrm{Todd}(M)$ is the Todd genus, $\chi_{-1}(M)=\mathrm{sign}(M)$ is the signature, and $\chi_1(M)=\chi(M)$ is the Euler characteristic of $M$. In other words, the Todd genus of $M$ is equal to the number of fixed points whose weights are all positive.

One of important properties of weights is that, given an integer $w$, the number of times the weight $w$ occurs over all fixed points, counted with multiplicity, is the same as the number of times the weight $-w$ occurs over all fixed points, counted with multiplicity.

\begin{lem} \label{l23} \cite{H}, \cite{L} Let the circle act on a compact almost complex manifold $M$ with a discrete fixed point set. Then for any integer $w$,
\begin{center}
$\D{\sum_{p \in M^{S^{1}}} N_{p}(w)=\sum_{p \in M^{S^{1}}} N_{p}(-w)}$,
\end{center}
where $N_{p}(w)$ is the multiplicity of $w$ in the isotropy representation $T_{p} M$. \end{lem}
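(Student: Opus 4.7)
The claim is equivalent to the palindrome identity $P(t)=P(t^{-1})$ for the Laurent polynomial
$$P(t):=\sum_{p\in M^{S^1}}\sum_{j=1}^n t^{w_p^j},$$
since the coefficient of $t^w$ in $P(t)$ is exactly $\sum_p N_p(w)$. My plan follows the approach of Hattori \cite{H} and Li \cite{L}: use the Atiyah--Bott--Berline--Vergne (ABBV) localization theorem in equivariant K-theory together with the rigidity of the equivariant Dolbeault index, of which Theorem \ref{t22} is a special case. The governing principle is that for every equivariant complex vector bundle $V\to M$, the equivariant index
$$\mathrm{ind}^{S^1}(\bar{\partial}_V)(t)=\sum_{p\in M^{S^1}}\frac{\mathrm{ch}_t(V|_p)}{\prod_{j=1}^n(1-t^{-w_p^j})}$$
is a \emph{genuine} Laurent polynomial in $t$, so the apparent poles of the right-hand side at roots of unity must cancel.

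The main argument I would carry out is a reduction via the fixed-point loci of finite cyclic subgroups, inducting on $\dim M$. For a positive integer $w$, consider the $\mathbb{Z}_w$-fixed submanifold $M^{\mathbb{Z}_w}\subset M$: its connected components inherit compatible almost complex structures and an induced $S^1/\mathbb{Z}_w$-action, its $S^1$-fixed set coincides with $M^{S^1}$, and at $p\in M^{S^1}$ the weights of the quotient circle on $T_p(M^{\mathbb{Z}_w})$ are $\{w_p^j/w : w\mid w_p^j\}$. Extracting the residue of the ABBV formula at a primitive $w$-th root of unity $\zeta$ isolates exactly the contributions of the tangent directions at $p$ whose weights are divisible by $w$, and identifies this residue with an analogous ABBV contribution on $M^{\mathbb{Z}_w}$. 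This reduces the multiplicity identity for weights $\pm w$ on $M$ to the multiplicity identity for weights $\pm 1$ on the components of $M^{\mathbb{Z}_w}$, which have strictly smaller dimension once the action is assumed effective (without loss of generality, by quotienting the kernel).

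The main obstacle is making the residue-and-reduction step rigorous: one has to verify that $M^{\mathbb{Z}_w}$ genuinely inherits the required almost complex and equivariant structure, identify the ABBV residue at $\zeta$ with a localization contribution computed directly on $M^{\mathbb{Z}_w}$, and handle the base case $w=1$ separately (for instance via direct Laurent expansion of $\mathrm{ind}^{S^1}(\bar{\partial}_\mathbb{C})(t)=\mathrm{Todd}(M)$ around $t=0$ and $t=\infty$, together with similar expansions for $\mathrm{ind}^{S^1}(\bar{\partial}_V)$ for a family of test bundles $V$). These are standard facts in equivariant topology, but tracking them through the residue calculation takes some care; once in place, the induction closes and yields the palindrome $P(t)=P(t^{-1})$, which is the assertion of the lemma.
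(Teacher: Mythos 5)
The first thing to say is that the paper does not prove this lemma at all: it is quoted from Hattori \cite{H} and Li \cite{L}, so there is no internal proof to compare your argument against, and the assessment has to be of your plan on its own merits. Your general strategy --- rigidity of equivariant Dolbeault-type indices plus localization --- is the right family of ideas, but as written the argument does not close, for two reasons. (i) The induction on $\dim M$ is illusory. Passing to $M^{\mathbb{Z}_w}$ does convert the weights $\pm w$ on $M$ into the weights $\pm 1$ of the quotient circle on $M^{\mathbb{Z}_w}$, so the general case reduces to the case $w=1$; but for $w=1$ the isotropy submanifold is $M$ itself, the reduction is vacuous, and the entire content of the lemma lands in the base case. (ii) That base case, $\sum_p N_p(1)=\sum_p N_p(-1)$ for an arbitrary compact almost complex $S^1$-manifold with discrete fixed point set, is precisely the part you leave unspecified: expanding $\mathrm{Todd}(M)=\sum_p\prod_j(1-t^{w_p^j})^{-1}$ at $t=0$ and $t=\infty$ yields information about the numbers $N^i$ and about sums of weights, not about the multiplicity of the particular weight $1$, and ``similar expansions for a family of test bundles $V$'' does not say which bundles or why their indices would isolate that multiplicity. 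So the proposal is a plausible outline whose decisive step is missing.

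A complete proof needs only the rigidity statement already recorded as Theorem \ref{t22}, and it handles all $w$ simultaneously with no residues at roots of unity and no passage to isotropy submanifolds. Summing the identities of Theorem \ref{t22} against $y^i$ gives $\sum_{p}\prod_{j}\frac{1+y\,t^{w_p^j}}{1-t^{w_p^j}}=\chi_y(M)$, a polynomial in $y$ whose coefficients do not depend on $t$. Differentiating in $y$ and setting $y=-1$ (so that all factors $\frac{1+y\,t^{w_p^j}}{1-t^{w_p^j}}$ with $j$ different from the differentiated index become $1$) yields
\[
\sum_{p\in M^{S^1}}\sum_{j=1}^{n}\frac{t^{w_p^j}}{1-t^{w_p^j}}=C
\]
for some constant $C$. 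Writing $A_w=\sum_pN_p(w)$ and $B_w=\sum_pN_p(-w)$ for $w>0$ and using $\frac{t^{-w}}{1-t^{-w}}=-1-\frac{t^{w}}{1-t^{w}}$, the left-hand side equals $\sum_{w>0}(A_w-B_w)\frac{t^{w}}{1-t^{w}}$ plus a constant; letting $t\to0$ shows that $\sum_{w>0}(A_w-B_w)\frac{t^{w}}{1-t^{w}}=0$ identically. Expanding $\frac{t^{w}}{1-t^{w}}=\sum_{k\geq1}t^{kw}$, the coefficient of $t^{m}$ is $\sum_{w\mid m}(A_w-B_w)$, and its vanishing for every $m\geq1$ forces $A_w=B_w$ for all $w$ by induction on $w$ (or M\"obius inversion). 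This is, in substance, the argument of \cite{L}; I would recommend replacing your reduction scheme by it.
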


Moreover, there exist fixed points whose numbers of negative weights differ by 1.

\begin{lemma} \label{l24} \cite{J2} Let the circle act on a compact almost complex manifold $M$ with a non-empty discrete fixed point set such that $\dim M>0$. Then there exists $i$ such that $N^i \neq 0$ and $N^{i+1} \neq 0$. \end{lemma}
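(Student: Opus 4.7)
The plan is a proof by contradiction. Assume that for every $i$, at most one of $N^i,N^{i+1}$ is non-zero, so the non-zero entries of $(N^0,\dots,N^n)$ are separated by gaps of length at least two. By Theorem \ref{t22} the sequence is palindromic, $N^i=N^{n-i}$; reversing the $S^1$-action sends $n_p\mapsto n-n_p$ at every fixed point and hence preserves the sequence and the gap hypothesis. I may therefore focus on the smallest index $i_0$ with $N^{i_0}\neq 0$ and assume $i_0\le n/2$. Fix a point $p$ with $n_p=i_0$ and let $w>0$ be the smallest positive weight at $p$. By Lemma \ref{l23}, $-w$ occurs at some other fixed point $q$; the aim is to show $n_q=i_0+1$, which contradicts $N^{i_0+1}=0$.

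To compare the weights at $p$ and $q$, I would consider the fixed submanifold $F=M^{\mathbb{Z}_w}$ of the cyclic isotropy subgroup $\mathbb{Z}_w\subset S^1$. It is an $S^1$-invariant almost complex submanifold of $M$ containing both $p$ and $q$. The summands of $T_pM$ on which $\mathbb{Z}_w$ acts trivially are precisely those whose weights are divisible by $w$; by the minimality of $w$ among positive weights at $p$, only the weight-$w$ summand is simultaneously positive and divisible by $w$. I would then argue that the connected component of $F$ through $p$ is two-dimensional, and hence is an $S^1$-invariant $2$-sphere joining $p$ to another fixed point of the residual $S^1/\mathbb{Z}_w$-action, which must be $q$.

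With this $2$-sphere in hand, the normal bundle $\nu$ of the sphere inside $M$ is an $S^1$-equivariant complex bundle of rank $n-1$ on $S^2$ splitting as a sum of line bundles of Chern degrees $k_2,\dots,k_n$; the weights at $q$ other than $-w$ are $w_p^j+k_jw$ for the weights $w_p^j$ at $p$ other than $w$. The minimality of $w$ together with the gap hypothesis $N^{i_0+1}=0$ should force each shift $k_j$ to vanish, so that the multiset of weights at $q$ is $\{-w\}\cup\{w_p^2,\dots,w_p^n\}$, giving $n_q=i_0+1$ and the desired contradiction.

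The principal obstacle is this third step: ruling out non-zero shifts $k_j$ and, before that, verifying that the connected component of $F$ through $p$ is genuinely an invariant $2$-sphere in the almost complex (rather than complex algebraic or symplectic) setting. If these geometric facts are not available directly in full generality, the fallback is to induct on $\dim M$ by applying the statement to the smaller almost complex $S^1$-manifold $M^{\mathbb{Z}_w}$, after checking that its fixed set remains discrete and that the weights of the $S^1/\mathbb{Z}_w$-action on it inherit the gap hypothesis from $M$.
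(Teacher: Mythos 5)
Your argument has two genuine gaps, and they sit exactly at the load-bearing points. First, taking $w$ to be the smallest positive weight \emph{at $p$} does not make the component of $M^{\mathbb{Z}_w}$ through $p$ two-dimensional: minimality of $w$ among the positive weights at $p$ does not exclude other weights at $p$ divisible by $w$, e.g.\ $2w$, $-w$, or $-2w$, so the subspace of $T_pM$ on which $\mathbb{Z}_w$ acts trivially can have complex dimension bigger than one. (Only the \emph{globally} smallest positive weight, and an extra argument, controls this, and even then one does not get a $2$-sphere for free.) Second, and more seriously, the step ``the minimality of $w$ together with the gap hypothesis forces each shift $k_j$ to vanish'' is precisely the content of the lemma and is nowhere argued. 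Lemma \ref{l25} only gives congruence of the weights at the two endpoints modulo $w$; a positive weight $w_p^j$ at $p$ may correspond to a negative weight $w_p^j-mw$ (with $m\geq 2$) at $q$, and nothing in your minimality assumption or in the hypothesis that $N^{i_0+1}=0$ rules this out for the particular point $q$ you produced. So the contradiction is never reached. The fallback induction also does not close the gap: applying the statement to a component $Z$ of $M^{\mathbb{Z}_w}$ gives adjacent nonzero $N^i_Z$, where $n_p^Z$ counts only the negative weights tangent to $Z$; since the number of negative \emph{normal} weights can differ from one fixed point of $Z$ to another (again, Lemma \ref{l25} controls them only mod $w$), adjacent nonzero $N^i_Z$ do not yield adjacent nonzero $N^i$ for $M$.

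For comparison, the paper does not prove this lemma; it is quoted from \cite{J2}, and the actual argument there is global rather than sphere-by-sphere. One takes $w$ to be the smallest positive weight over \emph{all} fixed points and proves the counting statement recorded here as Lemma \ref{l31}: the number of occurrences of $+w$ at fixed points with $n_p=i$ equals the number of occurrences of $-w$ at fixed points with $n_p=i+1$, for every $i$. This is obtained from the rigidity identity of Theorem \ref{t22} (equivalently, from localization for the $\chi_y$-genus) by comparing low-order coefficients in $t$, using that no positive weight is smaller than $w$; it is a statement about sums over all fixed points, which is why the possible sign flips along an individual isotropy sphere never have to be excluded. Once Lemma \ref{l31} is in hand, the lemma is immediate: $w$ occurs at some fixed point with $n_p=i$, hence $-w$ occurs at some fixed point with $n_p=i+1$, so $N^i\neq 0$ and $N^{i+1}\neq 0$. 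If you want to salvage your approach, you should replace the single-sphere analysis by this counting argument (or prove the counting statement yourself), since that is the missing idea.
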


Let the circle act effectively on a compact almost complex manifold $M$ with a non-empty discrete fixed point set $M^{S^1}$. Let $w>1$ be a positive integer. As a subgroup of $S^1$, $\mathbb{Z}_w$ acts on $M$. The set $M^{\mathbb{Z}_w}$ of points fixed by the $\mathbb{Z}_w$-action is a union of smaller dimensional almost complex submanifolds. Assume that an $S^1$-fixed point $p \in M^{S^1}$ is contained in a connected component $Z$ of $M^{\mathbb{Z}_w}$. Then $p$ has exactly $m$ weights that are divisible by $w$ if and only if $\dim Z=2m$. We shall denote by $\Sigma_p^Z$ the multiset of weights in the isotropy representation $T_pZ$, $n_p^Z$ the number of negative weights in $T_pZ$, and $N^i_Z$ the number of fixed points in $Z$ with $n_p^Z=i$. If two fixed points $p$ and $p'$ are in the same connected component $Z$, then their weights are equal modulo $w$.

\begin{lem} \label{l25} \cite{T}, \cite{GS} Let the circle act on a compact almost complex manifold $M$. Let $p$ and $p'$ be fixed points which lie in the same connected component of $M^{\mathbb{Z}_{w}}$, for some positive integer $w$. Then the $S^{1}$-weights at $p$ and at $p'$ are equal modulo $w$. \end{lem}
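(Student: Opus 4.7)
The plan is to exploit the fact that $Z$, being a connected component of the fixed set of a compact Lie group acting by automorphisms of $J$, is an almost complex submanifold of $M$, together with the decomposition of the normal bundle of $Z$ into $\mathbb{Z}_w$-isotypic subbundles. Since $S^1$ is abelian, the subgroup $\mathbb{Z}_w$ commutes with the $S^1$-action, hence $S^1$ preserves $M^{\mathbb{Z}_w}$ and, by connectedness, preserves $Z$. Therefore at each $S^1$-fixed point $q \in Z$ (in particular $q=p$ or $q=p'$) the tangent space splits equivariantly as $T_qM = T_qZ \oplus N_qZ$.

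The first step is to identify which $S^1$-weights appear on each summand. The subspace $T_qZ$ is the $\mathbb{Z}_w$-fixed subspace of $T_qM$, so a weight $w_q^i$ contributes to $T_qZ$ precisely when $w \mid w_q^i$, and otherwise contributes to $N_qZ$. Hence the tangential weights are all $\equiv 0 \pmod w$ at both $p$ and $p'$, and it remains to compare the normal weights modulo $w$.

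The second step is to split the normal bundle by $\mathbb{Z}_w$-characters. Since $\mathbb{Z}_w$ acts fiberwise on the complex $S^1$-equivariant vector bundle $NZ \to Z$, one has a decomposition
\begin{equation*}
NZ = \bigoplus_{k=1}^{w-1} N^{(k)}Z,
\end{equation*}
where $\mathbb{Z}_w$ acts on the complex subbundle $N^{(k)}Z$ by the character $\zeta \mapsto \zeta^k$. Each $N^{(k)}Z$ has locally constant rank, so by connectedness of $Z$ its rank is the same at $p$ and at $p'$. On the other hand, at a fixed point $q \in Z$, an $S^1$-weight $w_q^i$ on $N_qZ$ contributes to $(N^{(k)}Z)_q$ precisely when $w_q^i \equiv k \pmod w$. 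Consequently, for every $k$ the multiplicity of residue $k$ among the normal weights at $q$ equals $\mathrm{rk}_{\mathbb{C}} N^{(k)}Z$, which is independent of $q$. Combining the tangential and normal contributions, the multisets $\{w_p^i \bmod w\}$ and $\{w_{p'}^i \bmod w\}$ coincide, which is the claim.

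The only delicate point in this outline is verifying that $Z$ is an honest almost complex submanifold, so that the splitting $T_qM = T_qZ \oplus N_qZ$ is $J$-equivariant and the normal bundle inherits a genuine complex structure; this is where the assumption that the $S^1$-action preserves $J$ (and hence so does $\mathbb{Z}_w$) is used to invoke the standard fact that fixed loci of compact groups acting almost complex-linearly are almost complex submanifolds. Everything else reduces to linear representation theory of the finite cyclic group $\mathbb{Z}_w$ on the fibers of an equivariant bundle over a connected base.
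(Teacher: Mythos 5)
Your argument is correct: the paper does not prove Lemma \ref{l25} itself but cites \cite{T} and \cite{GS}, and your proof via the $J$-invariance of $T_qZ=(T_qM)^{\mathbb{Z}_w}$, the $S^1$-equivariant splitting $T_qM=T_qZ\oplus N_qZ$, and the locally constant ranks of the $\mathbb{Z}_w$-isotypic subbundles $N^{(k)}Z$ over the connected component $Z$ is essentially the standard argument given in those references.
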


The Lemma \ref{l25} means that, there exists a bijection $\pi:\{1,\cdots,n\} \longrightarrow \{1,\cdots,n\}$ such that $w_p^{i} \equiv w_{p'}^{\pi(i)} \mod w$ for each $i$. 

Let $w>1$ be an integer. Consider an effective circle action on a compact almost complex manifold with a discrete fixed point set.  For each fixed point $p$, denote by $m_p^w$ ($m_p^w(+)$, $m_p^w(-)$, respectively) the number of weights (positive weights, negative weights, respectively) at $p$ that are divisible by $w$. As an immediate consequence of Theorem \ref{t21}, we can prove the following lemma, which will be used frequently, to prove Theorem \ref{t12}.

\begin{lemma} \label{l26} 
Let the circle act effectively on a compact almost complex manifold $M$ with a discrete fixed point set. Let $w>1$ be an integer.
\begin{enumerate}[(1)]
\item If there exists a fixed point $p$ with $m_p^w=1$, then there exists at least one more fixed point $p'$ with $m_{p'}^w=1$.
\item If there exists a fixed point $p$ with $m_p^w=3$, then there exists at least one more fixed point $p'$ with $m_{p'}^w=3$.
\item If there exists a fixed point $p$ with $m_p^w=2$, then there exist at least two more fixed points $p'$ and $p''$ with $m_{p'}^2=2$, $m_{p''}^w=2$. Moreover, one of them satisfies $m_q^w(-)=0$, one of them satisfies $m_q^w(-)=1$, and one of them satisfies $m_q^w(-)=2$.
\item If there exists a fixed point $p$ with $m_p^w \geq 4$, then there exist at least three more fixed points $q$ with $m_q^w=4$.
\end{enumerate}
\end{lemma}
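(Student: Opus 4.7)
My plan is to reduce each of (1)--(4) to the classification in Theorem \ref{t21}, applied to the connected component $Z$ of $M^{\mathbb{Z}_w}$ containing the given fixed point $p$. As recalled in Section 2, $Z$ is a compact, connected almost complex submanifold of real dimension $2 m_p^w$; the $S^1$-action restricts to an action on $Z$, which is nontrivial because $p$ is isolated in $Z^{S^1}$. By Lemma \ref{l25}, the weight multisets at $p$ and at any other $q \in M^{S^1} \cap Z$ agree modulo $w$, so in particular $m_q^w = m_p^w$ for every such $q$; counting fixed points of $Z$ therefore counts fixed points of $M$ lying in $Z$, all with the same value of $m^w$ as $p$.

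For parts (1), (2), and (4), Theorem \ref{t21} alone is enough. In (1), $\dim Z = 2$: a single fixed point would force $Z$ to be a point, so $|Z^{S^1}| \geq 2$. In (2), $\dim Z = 6$: Theorem \ref{t21}(1) again forces $|Z^{S^1}| \geq 2$. In (4), $\dim Z \geq 8$: Theorem \ref{t21} excludes $|Z^{S^1}| \in \{1, 2, 3\}$ (each would force $\dim Z \in \{0, 2, 4, 6\}$), so $|Z^{S^1}| \geq 4$. Each additional fixed point $q$ satisfies $m_q^w = m_p^w$ by the preceding paragraph, which is exactly what is needed.

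The main case is (3). Here $\dim Z = 4$, and Theorem \ref{t21} excludes $|Z^{S^1}| \in \{1, 2\}$, giving $|Z^{S^1}| \geq 3$ and hence at least two additional fixed points $p', p''$ with $m_{p'}^w = m_{p''}^w = 2$. For the ``moreover'' clause, let $N_Z^i$ be the number of fixed points $q \in Z$ with $n_q^Z = i$, and observe that $m_q^w(-) = n_q^Z$. Applying the rigidity formula in Theorem \ref{t22} to the $S^1$-action on $Z$ gives $N_Z^0 = N_Z^2$. If both were $0$, then every fixed point in $Z$ would have $n_q^Z = 1$, so $N_Z^1$ would be the only nonzero entry, contradicting Lemma \ref{l24} (which requires two consecutive nonzero $N_Z^i$); hence $N_Z^0 = N_Z^2 \geq 1$. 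Likewise, if $N_Z^1 = 0$, then neither $(N_Z^0, N_Z^1)$ nor $(N_Z^1, N_Z^2)$ is a pair of positive integers, again contradicting Lemma \ref{l24}. Thus $N_Z^0, N_Z^1, N_Z^2$ are all positive, exhibiting three fixed points in $Z$ with $m_q^w(-)$ equal to $0$, $1$, and $2$ respectively.

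The only real obstacle is the bookkeeping in (3): the clean reduction to $Z$ together with Theorem \ref{t21} handles (1), (2), (4) immediately, while the ``one of each negativity level'' conclusion in (3) requires combining the symmetry $N_Z^0 = N_Z^2$ from Theorem \ref{t22} with the no-isolated-nonvanishing constraint of Lemma \ref{l24}.
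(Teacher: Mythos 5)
Your proposal is correct and follows essentially the same route as the paper: pass to the connected component $Z$ of $M^{\mathbb{Z}_w}$ containing $p$ (of dimension $2m_p^w$), invoke Theorem \ref{t21} to bound the number of fixed points of the induced action on $Z$ in each case, and for the ``moreover'' clause in (3) combine $N_Z^0=N_Z^2$ from Theorem \ref{t22} with the consecutive-nonvanishing property of Lemma \ref{l24} to get $N_Z^i\neq 0$ for $0\le i\le 2$. (Minor note: the paper's proof cites Lemma \ref{l25} at this last step, evidently a typo for Lemma \ref{l24}, which is what you correctly use.)
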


\begin{proof}
Consider the set $M^{\mathbb{Z}_w}$ of points in $M$ that are fixed by the $\mathbb{Z}_w$-action, where $\mathbb{Z}_w$ acts on $M$ as a subgroup of $S^1$. Since the action is effective, $M^{\mathbb{Z}_w}$ is a union of smaller dimensional almost complex manifolds. Let $Z$ be a connected component of $M^{\mathbb{Z}_w}$ that contains an $S^1$-fixed point $p$. Then $p$ has precisely $m$ weights that are divisible by $w$ if and only if $\dim Z=2m$. There is an induced action $S^1=S^1/\mathbb{Z}_w$ on $Z$. The induced action has $p$ as a fixed point. Therefore, by Theorem \ref{t21},
\begin{enumerate}[(1)]
\item if $m=1$, then the induced action on $Z$ must have one more fixed point $p'$.
\item if $m=3$, then the induced action on $Z$ must have at least one more fixed point $p'$.
\item if $m=2$, then the induced action on $Z$ must have at least two more fixed points $p', p''$.
\item if $m \geq 4$, then the induced action on $Z$ must have at least three more fixed points $p'$.
\end{enumerate}
Since $p' \in Z$ (and $p'' \in Z$), $p'$ (and $p''$) has exactly $m$ weights that are divisible by $w$. Suppose that $m=2$. Applying Theorem \ref{t22} for the induced $S^1$-action on $Z$, it follows that $N^0_Z=N^2_Z$. Applying Lemma \ref{l25} for the induced action, there exists $i$ such that $N^i_Z \neq 0$ and $N^{i+1}_Z \neq 0$. It follows that $N^i_Z \neq 0$ for $0 \leq i \leq 2$. This proves the second statement of (3). \end{proof}

The following lemma is a generalization of the result on a semi-free symplectic circle action on a compact symplectic manifold with a discrete fixed point set \cite{TW} \cite{L}. Our proof is adapted from \cite{L}.

\begin{lem} \label{l27} Let the circle act on a $2n$-dimensional compact almost complex manifold with a non-empty discrete fixed point set. Assume that all the weights are $\pm w$ for some positive integer $w$. Then the number of fixed points is $k \cdot 2^n$ for some positive integer $k$. Moreover, $N^i=k \cdot {n \choose i}$, where $N^i$ is the number of fixed points that have exactly $i$ negative weights. \end{lem}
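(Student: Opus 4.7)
The plan is to apply the rigidity of the Hirzebruch $\chi_y$-genus from Theorem \ref{t22} and to exploit a symmetry that arises because every weight equals $\pm w$. The key idea is to sum the rigidity identity over $i$, packaging the localization data as a single generating function in $y$, so that a swap of formal variables becomes manifest.

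Concretely, using $\sum_i y^i \sigma_i(x_1,\ldots,x_n) = \prod_\ell(1+yx_\ell)$, Theorem \ref{t22} gives
\[
\chi_y(M) \;=\; \sum_{p \in M^{S^1}} \frac{\prod_\ell (1+y t^{w_p^\ell})}{\prod_\ell (1-t^{w_p^\ell})}.
\]
At a fixed point $p$ with $n_p = m$, the weights consist of $m$ copies of $-w$ and $n-m$ copies of $w$. Setting $s = t^w$, factoring $(1-s^{-1})^m = (-1)^m s^{-m}(1-s)^m$, and clearing the common denominator $(1-s)^n$ yields the polynomial identity
\[
\chi_y(M)\,(1-s)^n \;=\; \sum_{m=0}^n N^m \bigl(-(s+y)\bigr)^m (1+ys)^{n-m}.
\]
The crucial observation is that both $-(s+y)$ and $1+ys$ are symmetric in $s$ and $y$, so the right hand side is invariant under the swap $s \leftrightarrow y$. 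Hence $\chi_y(M)(1-s)^n = \chi_s(M)(1-y)^n$, which forces $\chi_y(M)/(1-y)^n$ to be a constant $c$ (independent of $y$).

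Evaluating at $y=0$ gives $c = \chi^0(M) = N^0$, so writing $k := N^0$ we obtain $\chi_y(M) = k(1-y)^n$. Matching this against $\chi_y(M) = \sum_i (-1)^i N^i y^i$ yields $N^i = k\binom{n}{i}$ for each $i$, and summing gives $|M^{S^1}| = k \cdot 2^n$. Nonemptiness of the fixed set forces $k \ge 1$, completing the proof. The main obstacle is spotting the $s \leftrightarrow y$ symmetry of the summand; once the rigidity identities are packaged into a single generating function, the conclusion drops out cleanly, whereas a term by term analysis of the individual $\chi^i$ would be combinatorially messy.
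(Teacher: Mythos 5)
Your proof is correct. It rests on the same engine as the paper's: Li's rigidity (Theorem \ref{t22}) applied at a fixed point where all weights are $\pm w$, together with the rewriting $(1-s^{-1})^m=(-1)^m s^{-m}(1-s)^m$ that produces the binomial structure. The difference is only in how you extract the conclusion: you package all the $\chi^i$ into the generating function $\chi_y(M)$ and invoke the $s\leftrightarrow y$ symmetry of $\sum_m N^m(-(s+y))^m(1+ys)^{n-m}$ to conclude $\chi_y(M)=N^0(1-y)^n$, whereas the paper simply evaluates the single identity $\chi^0(M)=N^0=\sum_m N^m(-t^w)^m/(1-t^w)^n$ and reads off $\sum_m N^m(-t^w)^m=N^0(1-t^w)^n$ directly. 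Note that setting $y=0$ in your displayed identity recovers the paper's one-line argument, so the symmetry step, while elegant (it exhibits the full rigidity $\chi_y(M)=N^0(1-y)^n$ rather than just the coefficient identities), is strictly more than is needed. One small stylistic point: you avoid the paper's preliminary ``quotient by the trivially acting $\mathbb{Z}_w$'' reduction by working with $s=t^w$ throughout, which is perfectly fine and arguably cleaner.
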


\begin{proof}
Quotient out by the subgroup $\mathbb{Z}_w$ that acts trivially on $M$. Now the action is semi-free, that is, free outside the fixed point set. Therefore, all the weights are now $\pm 1$. By Theorem \ref{t22}, we have
\begin{center}
$\displaystyle \chi^0(M)=N^0=\sum_{p \in M^{S^1}} \frac{1}{\prod_{i=1}^n (1-t^{\pm 1})}=\frac{\sum_{i=0}^n N^i (-t)^i}{(1-t)^n}$.
\end{center}
Since the fixed point set are non-empty, one of $N^i$ is non-zero. This implies that $\chi^0(M)>0$. Assume that $\chi^0(M)=N^0=k$. Then it follows that $N^i=k \cdot {n \choose i}$. \end{proof}

The example of such a manifold is a $k$-copies of $S^2 \times \cdots \times S^2$, where the circle acts on each $S^2$ by rotation at speed $w$. One of the key steps to prove Theorem \ref{t12} is to consider the largest weight among all the weights. The following lemma states how the largest weight behaves. 

\begin{lem} \label{l28}
Let the circle act on a compact almost complex manifold with a discrete fixed point set. Let $l$ be the largest weight. Let $Z$ be a connected component of the set $M^{\mathbb{Z}_l}$ of points fixed by the $\mathbb{Z}_l$-action. If $Z$ contains an $S^1$-fixed point $p$, then the induced $S^1$-action on $Z$ has $k \cdot 2^m$ fixed points for some integer $k$, where $2m=\dim Z$. Moreover, $N_Z^i=k \cdot {m \choose i}$, where $N_{Z}^i$ is the number of fixed points that have the weight $(-l)$ $i$-times. \end{lem}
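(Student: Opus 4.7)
The plan is to reduce Lemma 2.8 directly to Lemma 2.7 by showing that the induced $S^1$-action on $Z$ satisfies the hypotheses of the latter with $w=l$.

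First I would check the basic geometric setup. Since the $S^1$-action on $M$ preserves $J$, so does the restricted $\mathbb{Z}_l$-action, and hence $M^{\mathbb{Z}_l}$ is a union of almost complex submanifolds of $M$. In particular, the component $Z$ containing $p$ is a compact almost complex submanifold, and the quotient action of $S^1/\mathbb{Z}_l \cong S^1$ preserves the restricted almost complex structure on $Z$. The $S^1$-fixed set of $Z$ is $Z\cap M^{S^1}$, which is a subset of the discrete set $M^{S^1}$ and contains $p$, hence is non-empty and discrete.

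Next I would analyse the weights of the induced action. For any fixed point $q \in Z \cap M^{S^1}$, the weights of the induced $S^1$-action on $T_q Z$ are precisely those weights of $T_q M$ that are divisible by $l$ (this is the standard characterization of the normal/tangential splitting along a component of $M^{\mathbb{Z}_l}$; it is already invoked implicitly in the paper, for instance in the discussion preceding Lemma 2.5). Every weight of the $S^1$-action on $M$ has absolute value at most $l$ by the choice of $l$, so any weight of $T_q Z$ is a nonzero multiple of $l$ of absolute value at most $l$, and therefore equals $\pm l$. Thus every weight of the induced $S^1$-action on $Z$ at every fixed point is $\pm l$.

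Now I would apply Lemma 2.7 to the induced $S^1$-action on $Z$, with $w=l$ and $2m = \dim Z$: the number of fixed points of this action is $k\cdot 2^m$ for some positive integer $k$, and the number of fixed points with exactly $i$ negative weights is $k\binom{m}{i}$. Since all weights on $T_q Z$ are $\pm l$, having exactly $i$ negative weights is the same as the weight $-l$ occurring $i$ times, which is the definition of $N_Z^i$ in the statement. This gives $N_Z^i = k\binom{m}{i}$, completing the proof.

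There is essentially no obstacle here beyond being careful that the induced action on $Z$ satisfies the hypotheses of Lemma 2.7; the only nontrivial point is the observation that the maximality of $l$ among all $S^1$-weights of $M$ forces every weight tangent to $Z$ to be exactly $\pm l$, after which Lemma 2.7 does all the work.
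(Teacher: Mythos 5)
Your proof is correct and follows essentially the same route as the paper: observe that the tangential weights along $Z$ are exactly the $S^1$-weights divisible by $l$, note that maximality of $l$ (together with the $\pm w$ symmetry of Lemma \ref{l23}) forces these to be $\pm l$, and then apply Lemma \ref{l27} to the induced $S^1/\mathbb{Z}_l$-action on $Z$. The paper's own proof is just a terser version of the same argument.
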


\begin{proof} Let $Z$ be a connected component of $M^{\mathbb{Z}_l}$. On $Z$, there is an induced action of $S^1=S^1/\mathbb{Z}_l$. If $p$ is a fixed point of the induced action, then the weights in the isotropy representation of $T_pZ$ are $\pm l$. Applying Lemma \ref{l27} for the induced action on $Z$, the lemma follows. \end{proof}

If we push-forward the class $\alpha=1$ in the Atiyah-Bott-Berline-Vergne localization formula, we obtain the following lemma; see, for instance, section 2 of \cite{PT} (take $\alpha=1$ in Theorem 7 of \cite{PT}).

\begin{lem} \label{l29} Let the circle act on a compact almost complex manifold $M$ with a discrete fixed point set such that $\dim M>0$. Then
\begin{center}
$\displaystyle \sum_{p \in M^{S^1}} \frac{1}{\prod_{i=1}^n w_p^i}=0$.
\end{center}
\end{lem}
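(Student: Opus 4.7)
The plan is to apply the Atiyah--Bott--Berline--Vergne (ABBV) equivariant localization theorem to the constant equivariant cohomology class $\alpha = 1 \in H^0_{S^1}(M)$. Since the fixed point set is discrete, the localization formula specializes to
\[
\int_M 1 \;=\; \sum_{p \in M^{S^1}} \frac{1}{e^{S^1}(T_pM)},
\]
where $e^{S^1}(T_pM) = t^n \prod_{i=1}^n w_p^i$ is the equivariant Euler class of the tangent space at $p$, with $t$ the degree-$2$ generator of $H^*_{S^1}(\mathrm{pt}) \cong \mathbb{Z}[t]$, and the left-hand side denotes the equivariant pushforward to a point.

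First I would note that the almost complex structure $J$ canonically orients each $T_pM$ and decomposes it as a sum of complex one-dimensional $S^1$-representations with weights $w_p^1,\dots,w_p^n$, so the Euler class written above is unambiguous. Second, I would observe that the equivariant pushforward of $1$ from $M$ to a point lands in degree $-2n$ of $H^*_{S^1}(\mathrm{pt})$; since $n = \tfrac{1}{2}\dim M > 0$ and the equivariant cohomology of a point vanishes in negative degrees, this pushforward is $0$. Third, each summand on the right equals $\frac{1}{t^n \prod_{i=1}^n w_p^i}$, so after clearing the common factor $t^{-n}$ the identity
\[
\sum_{p \in M^{S^1}} \frac{1}{\prod_{i=1}^n w_p^i} = 0
\]
follows.

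No serious obstacle arises: this is a textbook consequence of ABBV applied to $\alpha = 1$, and the excerpt itself points the reader to Theorem 7 of \cite{PT}. The only genuine point to verify is that the ABBV formula, classically stated for symplectic or complex manifolds, applies in the almost complex setting where the circle action preserves $J$ and the fixed set is discrete; this is exactly the framework used in \cite{PT}, so invoking their formulation with $\alpha=1$ completes the argument.
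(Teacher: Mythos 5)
Your proposal is correct and is exactly the argument the paper intends: the paper states that Lemma \ref{l29} follows by pushing forward the class $\alpha=1$ in the Atiyah--Bott--Berline--Vergne localization formula, citing Theorem 7 of \cite{PT}, which is precisely what you carry out (including the degree count showing the pushforward of $1$ vanishes). No discrepancies.
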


The following example illustrates a circle action on a 4-dimensional compact almost complex manifold with 4 fixed points, whose weights are as in Theorem \ref{t11}.

\begin{exa} \label{e210}
Let $n$ be an integer. Consider a Hirzebruch surface $\{([z_0:z_1:z_2],[w_1:w_2])\in\mathbb{CP}^2\times\mathbb{CP}^1|z_1 w_2^n=z_2 w_1^n\}$. For $g \in S^1 \subset \mathbb{C}$, let $g$ act by $g \cdot ([z_0:z_1:z_2],[w_1:w_2]) = ([g^a z_0:z_1:g^{nb} z_2],[w_1:g^b w_2])$. The action has 4 fixed points, $p_1=([1:0:0],[1:0]), p_2=([1:0:0],[0:1]), p_3=([0:1:0],[1:0]), p_4=([0:0:1],[0:1])$.
\begin{enumerate}[(1)]
\item At $p_1$, $\displaystyle (\frac{z_1}{z_0},\frac{w_2}{w_1})$ become local coordinates. Locally, the $S^1$-action is given by $\displaystyle g \cdot (\frac{z_1}{z_0},\frac{w_2}{w_1}) = (\frac{z_1}{g^a z_0},\frac{g^b w_2}{w_1})=(g^{-a}\frac{z_1}{z_0},g^b \frac{w_2}{w_1})$. Therefore, the weights at $p_1$ are $\Sigma_{p_1}=\{-a,b\}$.
\item At $p_2$, $\displaystyle(\frac{z_2}{z_0},\frac{w_1}{w_2})$ become local coordinates. Local action of $S^1$-action is given by $\displaystyle g \cdot (\frac{z_2}{z_0},\frac{w_1}{w_2}) = (g^{nb-a} \frac{z_2}{z_0},g^{-b} \frac{w_1}{w_2})$. The weights at $p_2$ are $\Sigma_{p_2}=\{nb-a,-b\}$.
\item At $p_3$, $\displaystyle(\frac{z_0}{z_1},\frac{w_2}{w_1})$ become local coordinates. Local action of $S^1$-action is given by $\displaystyle g \cdot (\frac{z_0}{z_1},\frac{w_2}{w_1}) = (g^{a} \frac{z_0}{z_1},g^{b} \frac{w_2}{w_1})$. The weights at $p_3$ are $\Sigma_{p_3}=\{a,b\}$.
\item At $p_4$, $\displaystyle(\frac{z_0}{z_2},\frac{w_1}{w_2})$ become local coordinates. Local action of $S^1$-action is given by $\displaystyle g \cdot (\frac{z_0}{z_2},\frac{w_1}{w_2}) = (g^{a-nb} \frac{z_0}{z_2},g^{-b} \frac{w_1}{w_2})$. The weights at $p_4$ are $\Sigma_{p_4}=\{a-nb,-b\}$.
\end{enumerate} \end{exa}

The following examples describe a circle action on a 6-dimensional compact almost complex manifold with 4 fixed points, whose weights are the same as the first, second, third (for $a=4,5$ only), and fourth case of Theorem \ref{t12}.

\begin{exa} \label{e211} Let $g \in S^1 \subset \mathbb{C}$ act on $\mathbb{CP}^3$ by $g \cdot [z_0:z_1:z_2:z_3]=[z_0:g^a z_1:g^b z_2:g^c z_3]$ for mutually distinct positive integers $a,b,c$. The fixed points are $[1:0:0:0], [0:1:0:0], [0:0:1:0], [0:0:0:1]$. The weights at the fixed points are $\{a,b,c\},\{-a,b-a,c-a\},\{-b,a-b,c-b\},\{-c,a-c,b-c\}$, respectively. \end{exa}

\begin{exa} \label{e212} \cite{A} Let $g \in S^1 \subset \mathbb{C}$ act on the complex quadric $Q^3=\{[z_0:z_1:z_2:z_3:z_4]\in\mathbb{CP}^4|z_0z_2+z_2z_3+z_4^3=0\}$ by $g \cdot [z_0:z_1:z_2:z_3:z_4]=[g^a z_0:g^{-a} z_1:g^b z_2:g^{-b} z_3:z_4]$ for mutually distinct positive integers $a,b$. The fixed points are $[1:0:0:0:0],[0:1:0:0:0],[0:0:1:0:0],[0:0:0:1:0]$ and the weights at the fixed points are $\{-a,b-a,-b-a\},\{a,b+a,-b+a\},\{-b,a+b,-a-b\},\{b,a+b,-a+b\}$, respectively. \end{exa}

\begin{exa} \label{e213} For the examples of a manifold of the third type in Theorem \ref{t12} with $a=4$ or 5, refer to \cite{A} for complex description and \cite{M} for symplectic description. \end{exa}

\begin{exa} \label{e214} \cite{K2} The 6-sphere $S^6$ can be though of as the quotient of the Lie group $G_2$ by $SU(3)$. It admits an $S^1$-action with two fixed points. By Theorem \ref{t21}, it follows that the weights at the two fixed points are $\Sigma_{p_1}=\{-a-b,a,b\}$ and $\Sigma_{p_2}=\{-a,-b,a+b\}$ for some positive integers $a$ and $b$. By taking a disjoint union of circle actions on two 6-spheres each of which has two fixed points, the fourth type of Theorem \ref{t12} is provided. \end{exa}

\begin{rem} \label{r215} In this remark, we discuss a possibility on the existence of a manifold of the fifth type or the sixth type in Theorem \ref{t12}. Let the circle act on a 6-dimensional compact almost complex manifold with two fixed points (as a rotation on $S^6$) as in Theorem \ref{t21}.

First, suppose that $a<b$ and we are able to blow up (either complex or symplectic) a neighborhood of $p_1$. Then the blow up would replace $p_1$ by three fixed points whose weights are $\Sigma_{q_1}=\{-2a-b,a,b-a\},\Sigma_{q_2}=\{-a-b,2a+b,a+2b\},\Sigma_{q_3}=\{a-b,-a-2b,b\}$. Let $a=A$ and $B=b-a$ and write weights in terms of $A$ and $B$. The blown up manifold $M'$ then would have the fixed point data same as the fifth case of Theorem \ref{t12}.

Second, consider the isotropy sphere $Z=S^2$ whose isotropy group is $\mathbb{Z}_{b}$. Note that the isotropy sphere $Z$ can be realized as the $b$-edge in Figure \ref{fig3-2}. Suppose that we are able to blow up a neighborhood of $Z$. This would result in replacing the two fixed points $p_1$ and $p_2$ by 4 fixed points whose weights are $\{-a-b,a+2b,b\}$, $\{-a-2b,a,b\}$, $\{-a,-b,a+2b\}$, $\{-b,-a-2b,a+b\}$. By changing $a$ and $b$, the sixth case of Theorem \ref{t12} would be achieved. 

Therefore, a problem of the existence of a manifold of the fifrh type and the sixth type may be reduced to the existence problem of a manifold with two fixed points that is (locally) complex or symplectic. While a rotation of $S^6$ provides a manifold with 2 fixed points, it is not known if there is a complex structure preversing circle action on $S^6$ with 2 fixed points. On the other hand, it is not known if there exists a 6-dimensional symplectic manifold with 2 fixed points ($S^6$ cannot be symplectic) and this is an open question in equivariant symplectic geometry. If there exists such a manifold (either complex or symplectic), then the fifth type and the sixth type of Theorem \ref{t12} would be obtained. \end{rem}

\section{Special Multigraphs}

Let the circle act on a compact almost complex manifold $M$ with a discrete fixed point set. It is known that there exists a multigraph that describes $M$ \cite{GS}, \cite{JT}. In this section, we associate a multigraph where we draw edges differently depending on the size of weights; see Lemma \ref{l35}. While our choice looks a bit more complicated, it reduces the computation for the classification of our main results.

First, we discuss the properties of small weights. In \cite{J2}, the author introduces the notion of primitive weights. A positive weight $w$ is called primitive if $w$ cannot be written as the sum of positive weights, other then $w$ itself. Primitive weights are well-behaved in a sense that, for each primitive weight $w$, the number of times the weight $+w$ occurs at fixed points $p$ with $n_p=i$ is equal to the number of times the weight $-w$ occurs at fixed points $p$ with $n_p=i+1$, for all $i$. In this paper, we shall only use the fact that the smallest positive weight and the second smallest positive weight are primitive.

\begin{lem} \label{l31} Let the circle act on a compact almost complex manifold with a discrete fixed point set. Let $w$ be either the smallest positive weight or the second smallest positive weight among weights over all the fixed points, counted with multiplicity. Then for each $i$, the number of times $w$ occurs at fixed points with $n_p=i$ is equal to the number of times $-w$ occurs at fixed points with $n_p=i+1$.  \end{lem}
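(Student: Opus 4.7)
The plan is to apply the Hirzebruch $\chi_y$-genus rigidity (Theorem \ref{t22}) in the form of an equivariant localization formula, then extract the claimed identity by reading off the coefficient of $t^w$ in the resulting rational function.

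Combining Theorem \ref{t22} over all $i$ via the generating-function identity $\sum_i y^i \sigma_i(x_1,\ldots,x_n) = \prod_j(1+y x_j)$ yields
\[
\chi_y(M) \;=\; \sum_{p \in M^{S^1}} \prod_{j=1}^n \frac{1 + y\, t^{w_p^j}}{1 - t^{w_p^j}} \;=\; \sum_{i=0}^n (-y)^i N^i,
\]
a constant in $t$. Taylor-expanding each local factor at $t=0$ gives, for $w_p^j > 0$, the series $1 + (1+y)\sum_{k \geq 1} t^{k w_p^j}$, and for $w_p^j < 0$ with $m := |w_p^j|$, the series $-y - (1+y)\sum_{k \geq 1} t^{k m}$. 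In particular, each factor has nonzero constant term ($1$ if the weight is positive, $-y$ if negative), and the product of these constants at $p$ is $(-y)^{n_p}$.

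Let $w$ be the smallest positive weight. Since every other weight has absolute value $\geq w$, the only way to produce a monomial of degree exactly $w$ from the local product at $p$ is to pick the $t^w$ term from a single factor with $|w_p^j| = w$ and the constant term from every other factor. Collecting, the coefficient of $t^w$ at $p$ equals $(1+y)\bigl[(-y)^{n_p} N_p(w) - (-y)^{n_p - 1} N_p(-w)\bigr]$. Summing over $p$, setting equal to zero (by rigidity), and dividing by $(1+y)$ yields
\[
\sum_p (-y)^{n_p} N_p(w) \;=\; \sum_p (-y)^{n_p - 1} N_p(-w).
\]
Equating the coefficient of $(-y)^i$ on both sides gives $A(i,w) = A(i+1,-w)$, where $A(i,w) := \sum_{p:\, n_p = i} N_p(w)$, as required.

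For $w$ the second-smallest positive weight, the same extraction applies, but one must also account for monomials of degree $w$ arising from higher multiples of single $\pm w_1$-factors (when $w = k w_1$ for some integer $k \geq 2$, where $w_1$ is the smallest positive weight) and from cross-products of several distinct $\pm w_1$-factors at one fixed point. The higher-multiple contributions reproduce, term by term, the $t^{w_1}$-identity and therefore vanish by the smallest-weight case already established. The main obstacle I anticipate is the cross-product contribution, which is not a priori zero; I expect to dispose of it by combining primitivity of $w_1$ with Theorem \ref{t21} applied to connected components of $M^{\mathbb{Z}_{w_1}}$ in order to constrain multi-occurrences of $\pm w_1$-weights at a single fixed point, and by invoking the $\chi^i$-identities of Theorem \ref{t22} for $i > 0$ to rearrange the remaining cross-term sum into linear combinations of identities already proven.
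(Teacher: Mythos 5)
Your localization computation for the smallest positive weight is correct and complete: the generating-function form of Theorem \ref{t22}, the local expansions $1+(1+y)\sum_{k\geq1}t^{km}$ and $-y-(1+y)\sum_{k\geq1}t^{km}$, the observation that only a single factor with $|w_p^j|=w$ can contribute to the coefficient of $t^{w}$, and the division of the resulting polynomial identity by $(1+y)$ are all sound. For what it is worth, the paper itself does not prove Lemma \ref{l31} at all; it imports it from \cite{J2} via the notion of primitive weights, so your route is an independent, self-contained one for the smallest-weight case.

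The genuine gap is in the second-smallest case, at exactly the point you flag. Write $w_1$ for the smallest and $w_2$ for the second-smallest positive weight and assume $w_1<w_2$ (otherwise the two cases coincide). You correctly identify that the only problematic contribution to the coefficient of $t^{w_2}$ is the cross-product one, which can occur only when $w_1\mid w_2$ and a single fixed point carries at least two weights of absolute value $w_1$; but you only announce a plan for handling it, and the plan as stated does not obviously work: $M^{\mathbb{Z}_{w_1}}$ is all of $M$ when $w_1=1$, and the $\chi^i$-identities of Theorem \ref{t22} are already subsumed in the $\chi_y$-computation you are performing. The term is not negligible: if such a fixed point $p_0$ existed, the cross terms would contribute $-(w_2/w_1-1)(1+y)^2(-y)^{n_{p_0}-1}$, which after dividing by $(1+y)$ genuinely spoils the desired identity. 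Fortunately, the case you have already proved closes the gap in one line. Since $w_1<w_2$ and weights are counted with multiplicity, $+w_1$ occurs exactly once over all fixed points, and by Lemma \ref{l23} so does $-w_1$; your smallest-weight identity $A(i,w_1)=A(i+1,-w_1)$ then forces the unique fixed point carrying $+w_1$ to have $n_p=i_0$ and the unique fixed point carrying $-w_1$ to have $n_p=i_0+1$, so they are distinct. Hence no fixed point has two factors of absolute weight $w_1$, the cross terms vanish identically, and together with your (correct) observation that the higher-multiple single-factor contributions reproduce the already-vanishing $t^{w_1}$-coefficient, the $t^{w_2}$-extraction goes through. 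With that one observation supplied, your argument is a complete proof.
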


\begin{rem} The smallest positive weight and the second smallest positive weight may be equal. Moreover, there may be other weights that are equal to the second smallest positive weight. The third smallest positive weight need not be primitive; see (2)(b) of Theorem \ref{t21} or Example \ref{e213}. \end{rem}

Let the circle act on a compact almost complex manifold $M$ with a discrete fixed point set. Let $w$ be a positive integer. As we have seen in Lemma \ref{l23}, the number of times weight $w$ occurs over all the fixed points, counted with multiplicity, is the same as the number of times weight $-w$ occurs over all the fixed points, counted with multiplicity. From this, a multigraph associated to $M$ has been considered either implicitly or explicitly; we draw an edge $\epsilon$ from a fixed point $p$ having weight $w$ to a fixed point $p'$ having weight $-w$. The direction implies that an edge goes from a fixed point $p$ having positive weight $w$ to a fixed point $p'$ having negative weight $-w$. We label the edge by $w$ to encode weight $w$.

\begin{definition} \label{d32} A \textbf{labelled, directed multigraph} consists of a set $V$ of vertices, a set $E$ of edges, maps $i \colon E \to V$ and $t \colon E \to V$ that give the initial vertex and the terminal vertex of each edge, and a map $w \colon E \to \mathbb{N}^+$ where $\mathbb{N}^+$ is the set of positive integers. \end{definition}

\begin{definition} \label{d33} Let the circle act on a compact almost complex manifold $M$ with a discrete fixed point set. A (labelled, directed) multigraph is called a \textbf{multigraph associated to $M$} if for any fixed point $p$, the multiset of weights at $p$ are $\{w(\epsilon) \mid i(\epsilon)=p\} \cup \{-w(\epsilon) \mid t(\epsilon)=p\}$. \end{definition}

An edge $\epsilon$ is called a \textbf{loop} if $i(\epsilon)=t(\epsilon)$. We show that we can associate a multigraph without any loop that satisfies extra properties.

\begin{lem} \label{l35}
Let the circle act on a compact almost complex manifold $M$ with a discrete fixed point set. Then there exists a multigraph associated to $M$ with following properties:
\begin{enumerate}[(1)]
\item Given an edge $\epsilon$, if $w(\epsilon)$ is smaller than or equal to the second smallest positive weight, then $n_{i(\epsilon)}+1=n_{t(\epsilon)}$.
\item Given an edge $\epsilon$, if $w(\epsilon)$ is strictly bigger than the second smallest positive weight, then the weights at $i(\epsilon)$ and the weights at $t(\epsilon)$ are equal modulo $w(\epsilon)$.
\item The graph has no loops. \end{enumerate} \end{lem}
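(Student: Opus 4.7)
The plan is to construct the multigraph weight by weight, choosing a matching between the occurrences of $+w$ and of $-w$ at the fixed points for each positive weight value $w$, with the matching selected differently according to the size of $w$.

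For each positive weight $w$ that is at most the second smallest positive weight appearing on $M$, I will invoke Lemma \ref{l31} directly on $M$: for every $i$, it supplies a bijection between the occurrences of $+w$ at fixed points with $n_p=i$ and the occurrences of $-w$ at fixed points with $n_{p'}=i+1$. Orienting each matched pair as an edge from $p$ to $p'$ with label $w$ gives property (1) by construction; moreover, since $n_{i(\epsilon)}\neq n_{t(\epsilon)}$ forces $i(\epsilon)\neq t(\epsilon)$, no such edge is a loop, so property (3) holds for these edges.

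For each positive weight $w$ strictly larger than the second smallest positive weight, I will instead work component by component in $M^{\mathbb{Z}_w}$. For any connected component $Z$ of $M^{\mathbb{Z}_w}$ that contains an $S^1$-fixed point carrying a weight $\pm w$, the induced action of $S^1/\mathbb{Z}_w\cong S^1$ on $Z$ preserves the restricted almost complex structure and has discrete fixed point set $M^{S^1}\cap Z$; under this induced action the $S^1$-weight $\pm w$ in $T_pM$ corresponds to induced weight $\pm 1$. Since $1$ is necessarily the smallest positive weight of the induced action on $Z$, Lemma \ref{l31} applied inside $Z$ provides, for each $i$, a bijection between occurrences of induced weight $+1$ at fixed points of $Z$ with $n_p^Z=i$ and occurrences of $-1$ at fixed points of $Z$ with $n_{p'}^Z=i+1$. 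Drawing the corresponding edge in $M$ labelled $w$ from $p$ to $p'$ yields property (2) via Lemma \ref{l25} (both endpoints lie in the same component $Z$), and no loops arise because $n_p^Z\neq n_{p'}^Z$.

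Assembling these edges across all positive weights produces a multigraph associated to $M$ in the sense of Definition \ref{d33}: at each fixed point $p$ the multiset of incident edge labels (with signs determined by orientation) recovers the multiset of weights at $p$, because the matching for each $w$ is exhaustive on both sides and Lemma \ref{l23} guarantees the totals of $+w$ and $-w$ agree. The main subtlety I expect is loop avoidance in the large-$w$ case, where $+w$ and $-w$ could both occur at a single fixed point; descending to the induced action on $Z$ circumvents this obstruction because the relevant induced weight $\pm 1$ is automatically primitive, so Lemma \ref{l31} forces the $n^Z$-values of the two endpoints to differ by $1$.
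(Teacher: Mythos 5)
Your proposal is correct and follows essentially the same route as the paper: small weights (at most the second smallest positive weight) are matched via Lemma \ref{l31} on $M$ itself so that $n_{i(\epsilon)}+1=n_{t(\epsilon)}$, and larger weights are matched inside each connected component $Z$ of $M^{\mathbb{Z}_w}$ by applying Lemma \ref{l31} to the induced action, for which $w$ becomes the smallest positive weight, with Lemma \ref{l25} then giving property (2). The only cosmetic difference is that you normalize the induced weight to $\pm 1$ while the paper keeps it as $\pm w$; your explicit remark that the $n^Z$-values of the endpoints differ (hence no loops) is implicit in the paper's construction.
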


\begin{proof}
Let $w$ be a positive weight that is smaller than or equal to the second smallest positive weight. By Lemma \ref{l31}, for each $i$, the number of times $w$ occurs at fixed points with $n_p=i$ is equal to the number of times $-w$ occurs at fixed points with $n_p=i+1$. We draw edges $\epsilon$ with label (weight) $w(\epsilon)=w$ from fixed points with $n_p=i$ having weight $w$ to fixed points with $n_p=i+1$ having weight $-w$. This proves the first part.

Let $w$ be an integer that is strictly bigger than the second smallest positive weight. The smallest positive weight in the isotropy submanifold $M^{\mathbb{Z}/w}$ is $w$ itself. Let $Z$ be a connected component of $M^{\mathbb{Z}/w}$. Applying Lemma \ref{l31} for the induced action of $S^1=S^1/\mathbb{Z}_w$, the number of times the weight $+w$ occurs in points with $n_p^Z=i$ in $Z$ is equal to the number of times the weight $-w$ occurs in points with $n_p^Z=i+1$ in $Z$, for each $i$. Draw edges $\epsilon$ whose label is $w$ accordingly by this recipe. Let $\epsilon$ be an edge with $w(\epsilon)=w$. Then by our choice of graph, $i(\epsilon)$ and $t(\epsilon)$ are in the same connected component $Z$ of $M^{\mathbb{Z}/w}$. Therefore, by Lemma \ref{l25}, the weights at $i(\epsilon)$ and the weights at $t(\epsilon)$ are equal modulo $w$. \end{proof}

\begin{rem} In \cite{JT}, a multigraph is said to \emph{describe} $M$ if in addition, the two endpoints $i(\epsilon)$ and $t(\epsilon)$ are in the same component of the isotropy submanifold $M^{\mathbb{Z}/(w(\epsilon))}$ for each edge $\epsilon$. It is called an \emph{integral multigraph} in \cite{GS}. In this paper, we draw an edge $\epsilon$ as in \cite{JT} if $w(\epsilon)$ is not small. If a weight is small, we want to draw an edge differently, by using Lemma \ref{l31}. While our choice of multigraphs seems more complicated, it reduces the number of multigraphs to consider. \end{rem}

\section{4-dimension with 4 fixed points}

In this section, we prove Theorem \ref{t11}; we determine the fixed point data of a circle action on a compact almost complex manifold $M$, when the dimension of the manifold is 4 and there are 4 fixed points. The proof of Theorem \ref{t11} will be given at the end of this section. First, we consider a multigraph associated to $M$, that satisfies the conditions in Lemma \ref{l35}.

\begin{lem} \label{l41}
Let the circle act on a 4-dimensional compact almost complex manifold with 4 fixed points. Then $N^0=1$, $N^2=2$, and $N^2=1$. Let $p_i$ be fixed points such that $n_{p_1}=0$, $n_{p_2}=1$, $n_{p_3}=1$, and $n_{p_4}=2$. Then exactly one of the figures in Figure \ref{fig4} occurs as a multigraph associated to $M$ that satisfies the conditions in Lemma \ref{l35}. Alternatively, exactly one of the following holds for the multisets of weights at $p_i$:
\begin{enumerate}[(1)]
\item $\Sigma_{p_1}=\{a,b\}$, $\Sigma_{p_2}=\{-a,c\}$, $\Sigma_{p_3}=\{-b,d\}$, and $\Sigma_{p_4}=\{-c,-d\}$ for some positive integers $a,b,c$, and $d$.
\item $\Sigma_{p_1}=\{a,b\}$, $\Sigma_{p_2}=\{-a,c\}$, $\Sigma_{p_3}=\{-c,d\}$, and $\Sigma_{p_4}=\{-b,-d\}$ for some positive integers $a,b,c$, and $d$.
\end{enumerate}
\end{lem}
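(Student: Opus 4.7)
The plan is to pin down $(N^0, N^1, N^2)$, enumerate all loopless directed multigraphs with the forced degree sequence, and then use Lemma \ref{l35}(1) to discard the spurious configuration.

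First, with $n = 2$, Theorem \ref{t22} gives $N^i = N^{2-i}$, so $N^0 = N^2$. Combined with $N^0 + N^1 + N^2 = 4$, the triple $(N^0, N^1, N^2)$ must be one of $(0,4,0)$, $(1,2,1)$, $(2,0,2)$. Lemma \ref{l24} requires some index $i$ with both $N^i$ and $N^{i+1}$ nonzero, eliminating the first and third options. Hence $(N^0, N^1, N^2) = (1,2,1)$, and I label the fixed points $p_1, p_2, p_3, p_4$ so that $n_{p_i} = 0, 1, 1, 2$ respectively.

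Next, I invoke Lemma \ref{l35} to select a loopless multigraph associated to $M$. Since each vertex carries two weights, the graph has four edges, and the (in, out) degrees are forced: $p_1$ has $(0,2)$, $p_4$ has $(2,0)$, and each of $p_2, p_3$ has $(1,1)$. Let $k$ be the number of edges directed from $p_1$ to $p_4$. For $k = 0$, the two edges leaving $p_1$ cannot land on a common vertex (which would force in-degree $\geq 2$), so one goes to $p_2$ and one to $p_3$; the out-edges of $p_2$ and $p_3$ must then both terminate at $p_4$ to account for its two in-edges, yielding configuration (1). For $k = 1$, after possibly exchanging $p_2$ and $p_3$ I may take the non-$p_4$ target of $p_1$ to be $p_2$. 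Then $p_3$'s unique in-edge cannot come from $p_1$ (used up) or from $p_4$ (which has no out-edges), and no loops are allowed, so it must come from $p_2$; this forces $p_2 \to p_3$, and then $p_3$'s sole out-edge must hit $p_4$ to supply its remaining in-edge. Labelling the edges gives configuration (2).

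The step I expect to be the main obstacle is ruling out $k = 2$. If $k = 2$, both edges leaving $p_1$ terminate at $p_4$ and saturate its in-degree; the only loop-free way to provide an in-edge to each of $p_2$ and $p_3$ is to set $p_2 \to p_3$ and $p_3 \to p_2$. Then every edge of the multigraph is of the form $0 \to 2$ or $1 \to 1$, so none satisfies $n_{i(\epsilon)} + 1 = n_{t(\epsilon)}$. However, by Lemma \ref{l23} every positive weight occurs as a label of some edge; in particular the smallest positive weight does, and Lemma \ref{l35}(1) then forces its edge to go from level $i$ to level $i + 1$, a contradiction. Hence $k \in \{0, 1\}$, and the two surviving multigraphs give precisely the two weight configurations in the statement.
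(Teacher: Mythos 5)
Your proof is correct and follows essentially the same route as the paper: determine $(N^0,N^1,N^2)=(1,2,1)$ from Theorem \ref{t22} and Lemma \ref{l24}, then enumerate the loopless multigraphs with the forced degree sequence, using condition (1) of Lemma \ref{l35} to exclude a double edge from $p_1$ to $p_4$. Your write-up is more detailed than the paper's (which dismisses the case analysis with one sentence), and your exclusion of the $k=2$ case via the edge carrying the smallest positive weight is exactly the intended argument; the only quibble is that the fact that every positive weight labels an edge follows from Definition \ref{d33} rather than from Lemma \ref{l23}.
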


\begin{proof} By Theorem \ref{t22}, $N^0=N^2$. By Lemma \ref{l24}, there exists $i$ such that $N^i\neq0$ and $N^{i+1}\neq0$. These imply that $N^0=N^2=1$ and $N^1=2$.

We draw a multigraph in the sense of Lemma \ref{l35}. Label fixed points by $p_i$ such that $n_{p_1}=0$, $n_{p_2}=1$, $n_{p_3}=1$, and $n_{p_4}=2$. Let $a$ and $b$ be positive weights at $p_1$. By the first condition of Lemma \ref{l35}, there cannot be two edges between $p_1$ and $p_4$. By condering possible multigraphs, the lemma follows. \end{proof}

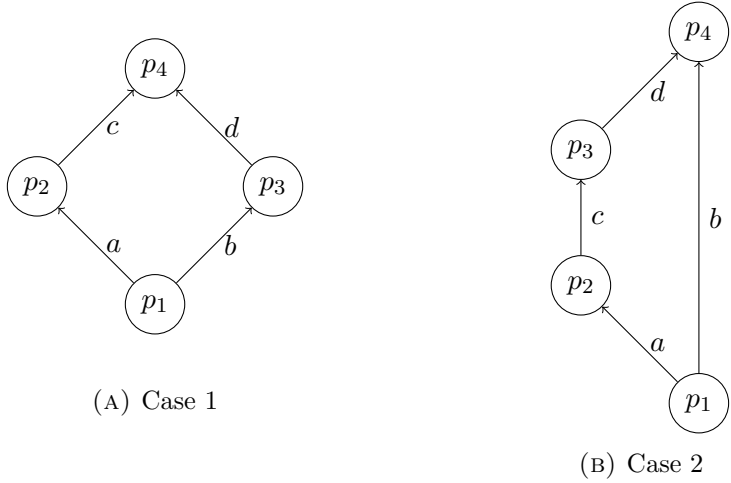
\begin{figure}
\centering
\begin{subfigure}[b][6cm][s]{.45\textwidth}
\centering
\vfill
\begin{tikzpicture}[state/.style ={circle, draw}]
\node[state] (A) {$p_1$};
\node[state] (B) [above left=of A] {$p_2$};
\node[state] (C) [above right=of A] {$p_3$};
\node[state] (D) [above right=of B] {$p_4$};
\path (A) [->] edge node[right] {$a$} (B);
\path (A) [->] edge node [right] {$b$} (C);
\path (B) [->] edge node [right] {$c$} (D);
\path (C) [->] edge node [right] {$d$} (D);
\end{tikzpicture}
\vfill
\caption{Case 1}\label{fig4-1}
\vspace{\baselineskip}
\end{subfigure}\qquad
\begin{subfigure}[b][6cm][s]{.45\textwidth}
\centering
\vfill
\begin{tikzpicture}[state/.style ={circle, draw}]
\node[state] (A) {$p_1$};
\node[state] (B) [above left=of A] {$p_2$};
\node[state] (C) [above =of B] {$p_3$};
\node[state] (D) [above right=of C] {$p_4$};
\path (A) [->] edge node[right] {$a$} (B);
\path (A) [->] edge node [right] {$b$} (D);
\path (B) [->] edge node [right] {$c$} (C);
\path (C) [->] edge node [right] {$d$} (D);
\end{tikzpicture}
\vfill
\caption{Case 2}\label{fig4-2}
\end{subfigure}
\caption{Multigraphs for 4-dimension with 4 fixed points.}\label{fig4}
\end{figure}

Therefore, our task to prove Theorem \ref{t11} is to classify the multisets of weights at the fixed points in each case of Lemma \ref{l41}.

\begin{lem} \label{l42} Suppose that the first case in Lemma \ref{l41} holds. Then either $b=c$ or $a=d$. If $b=c$, then either $a \equiv d \mod b$, or $a \equiv -d \mod b$. If $a=d$, then either $b \equiv c \mod a$, or $b \equiv -c \mod a$. \end{lem}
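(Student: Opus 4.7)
My plan is to derive the dichotomy $b=c$ or $a=d$ from the Atiyah--Bott--Berline--Vergne localization formula (Lemma \ref{l29}), and then read off the congruences by analysing the isotropy submanifold $M^{\mathbb{Z}_b}$ (respectively $M^{\mathbb{Z}_a}$) via Lemma \ref{l25}.

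Substituting the four multisets of Lemma \ref{l41}(1) into Lemma \ref{l29} gives
\begin{equation*}
\frac{1}{ab}-\frac{1}{ac}-\frac{1}{bd}+\frac{1}{cd}=0,
\end{equation*}
and clearing $abcd$ factors this as $(c-b)(d-a)=0$. Hence $b=c$ or $a=d$.

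Assume $b=c$; the goal is $a\equiv \pm d\pmod b$. When $b=1$ this is automatic, so assume $b>1$. If $b\mid a$, then both weights at $p_1$ are divisible by $b$, so the connected component of $M^{\mathbb{Z}_b}$ through $p_1$ has (real) dimension $4$; being open (of full dimension) and closed in $M$, and $M$ connected, this component is all of $M$. Consequently $b$ divides every weight at every fixed point, in particular $b\mid d$, and the conclusion $a\equiv 0\equiv d\pmod b$ holds trivially. The same reasoning handles $b\mid d$. So I may assume $b\nmid a$ and $b\nmid d$, in which case each of the four fixed points has exactly one weight divisible by $b$: the weight $+b$ at $p_1$ and $p_2$, and $-b$ at $p_3$ and $p_4$. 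Each $\mathbb{Z}_b$-component through one of these points is then a $2$-dimensional almost complex manifold on which the quotient circle $S^1/\mathbb{Z}_b$ acts with fixed points, and by Theorem \ref{t21}(2)(a) it is a $2$-sphere with exactly two fixed points whose tangent $S^1$-weights are negatives of one another. Therefore the four fixed points split into two pairs, each pair containing one point with tangent weight $+b$ and one with $-b$: either $(p_1,p_3)$ together with $(p_2,p_4)$, or $(p_1,p_4)$ together with $(p_2,p_3)$.

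To finish, I apply Lemma \ref{l25} within each pair. In the first pairing, the multisets $\{a,b\}$ and $\{-b,d\}$ must be congruent modulo $b$; since $b\nmid a$ and $b\nmid d$, the bijection is forced to match $a\leftrightarrow d$ and $b\leftrightarrow -b$, yielding $a\equiv d\pmod b$. In the second pairing, matching $\{a,b\}$ with $\{-b,-d\}$ similarly yields $a\equiv -d\pmod b$. The case $a=d$ is completely symmetric: one works with $\mathbb{Z}_a$ in place of $\mathbb{Z}_b$ and interchanges the roles of the pairs $(a,b)$ and $(c,d)$ to obtain $b\equiv \pm c\pmod a$. The main technical point requiring care is the clean separation of the degenerate sub-case in which some non-$b$ tangent weight happens to be divisible by $b$; I dispose of it above by showing that it forces the $\mathbb{Z}_b$-action to be trivial on $M$, making the desired congruence automatic.
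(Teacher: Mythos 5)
Your proof is correct and takes essentially the same route as the paper's: Lemma \ref{l29} factors as $(c-b)(d-a)=0$, and the congruences come from Lemma \ref{l25} applied to the $2$-dimensional connected component of $M^{\mathbb{Z}_b}$ through $p_1$, which must also contain $p_3$ or $p_4$ (carrying the weight $-b$). The only difference is bookkeeping: the paper first quotients by the trivially-acting subgroup so that effectiveness forces $\dim Z=2$ immediately, whereas you dispose of the degenerate sub-case $b\mid a$ by hand.
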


\begin{proof} By Lemma \ref{l29}, we have
\begin{center}
$\displaystyle 0=\frac{1}{ab}+\frac{1}{(-a)c}+\frac{1}{(-b)d}+\frac{1}{(-c)(-d)}=(\frac{1}{a}-\frac{1}{d})(\frac{1}{b}-\frac{1}{c})$.
\end{center}
Therefore, either $b=c$ or $a=d$.

First, suppose that $b=c$. By quotienting out by the subgroup that acts trivially, we may assume that the action is effective. The lemma follows if $b=1$. Next, suppose that $b>1$. Consider $M^{\mathbb{Z}_b}$, the set of points in $M$ that are fixed by the $\mathbb{Z}_b$-action, where $\mathbb{Z}_b$ acts on $M$ as a subgroup of $S^1$. Let $Z$ be a connected component of $M^{\mathbb{Z}_b}$ that contains $p_1$. Since the action is effective, $\dim Z=2$. Moreover, there is an induced action of $S^1=S^1/\mathbb{Z}_b$ that acts on $Z$. Since the induced action has $p_1$ as a fixed point, it follows that $Z$ is the 2-sphere and it contains another fixed point $p$ that has weight $-b$. Therefore, $p$ is either $p_3$ or $p_4$. By Lemma \ref{l25}, the weights at $p_1$ and the weights at $p$ are equal modulo $b$. If $p$ is $p_3$ we have $a \equiv d \mod b$ and if $p$ is $p_4$ we have $a \equiv -d \mod b$.

By the symmetry between $a$ and $b$, and $c$ and $d$, the other case follows. \end{proof}

\begin{lem} \label{l43} Suppose that the second case in Lemma \ref{l41} holds. Then $b=c$. Moreover, either $a \equiv d \mod b$ or $a \equiv -d \mod b$. \end{lem}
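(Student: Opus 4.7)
The plan is to mimic the proof of Lemma \ref{l42}: use Lemma \ref{l29} (localization with $\alpha=1$) for the equality $b=c$, then use Lemma \ref{l25} applied to a connected component of the isotropy submanifold $M^{\mathbb{Z}_b}$ for the congruence.

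For the algebraic step, I would apply Lemma \ref{l29} to the four fixed points of Case 2 of Lemma \ref{l41}. Reading off the weights from Figure \ref{fig4-2},
\begin{equation*}
0 \;=\; \frac{1}{ab} + \frac{1}{(-a)\,c} + \frac{1}{(-c)\,d} + \frac{1}{(-b)(-d)} \;=\; \Bigl(\frac{1}{a}+\frac{1}{d}\Bigr)\Bigl(\frac{1}{b}-\frac{1}{c}\Bigr).
\end{equation*}
Since $a$ and $d$ are positive, the first factor is strictly positive, so $b=c$. (Notice that the factorization differs from the one in Lemma \ref{l42}: the graph topology of Case 2 gives a sum of like signs $\tfrac{1}{a}+\tfrac{1}{d}$, which forces $b=c$ outright rather than offering an alternative $a=d$.)

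For the congruence, I would first quotient out by the finite subgroup of $S^{1}$ acting trivially to reduce to an effective action; this rescales all weights by a common factor and so preserves the statements $a\equiv\pm d\pmod b$. If $b=1$ the conclusion is automatic, so assume $b>1$. Let $Z$ be the connected component of $M^{\mathbb{Z}_b}$ containing $p_1$. Since exactly one weight at $p_1$ (namely $b$) is divisible by $b$, we have $\dim Z=2$, and the induced effective action of $S^{1}/\mathbb{Z}_b$ on $Z$ fixes $p_1$. By Theorem \ref{t21}(2)(a), $Z\cong S^{2}$ and contains precisely one further $S^{1}$-fixed point $p$, at which the weight in $T_p Z$ is $-b$. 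Inspecting the weight multisets in Case 2 (with $c=b$), the only candidates for $p$ are $p_3$, whose multiset is $\{-b,d\}$, or $p_4$, whose multiset is $\{-b,-d\}$. Lemma \ref{l25} then gives that the weight multisets at $p_1$ and at $p$ agree modulo $b$; matching the remaining weights (the ones not divisible by $b$) yields $a\equiv d\pmod b$ if $p=p_3$ and $a\equiv -d\pmod b$ if $p=p_4$.

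I do not anticipate a serious obstacle, as this is a direct adaptation of Lemma \ref{l42}. The only small point to verify is that the other $S^{2}$-fixed point in $Z$ cannot coincide with $p_1$ itself, but this is immediate because $p_1$ contributes the positive weight $+b$ to $T_{p_1}Z$, whereas the second fixed point must contribute $-b$.
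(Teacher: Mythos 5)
Your proposal is correct and follows essentially the same route as the paper: Lemma \ref{l29} yields the factorization $\bigl(\tfrac{1}{a}+\tfrac{1}{d}\bigr)\bigl(\tfrac{1}{b}-\tfrac{1}{c}\bigr)=0$, forcing $b=c$ since the first factor is positive, and the congruence then comes from Lemma \ref{l25} applied to the component of $M^{\mathbb{Z}_b}$ through $p_1$, exactly as in the proof of Lemma \ref{l42}. The extra details you supply (reducing to an effective action, the $b=1$ case, $\dim Z=2$) are precisely what the paper delegates to the phrase ``proceed as in the proof of Lemma \ref{l42}.''
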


\begin{proof} By Lemma \ref{l29}, we have
\begin{center}
$\displaystyle 0=\frac{1}{ab}+\frac{1}{(-a)c}+\frac{1}{(-c)d}+\frac{1}{(-b)(-d)}=(\frac{1}{a}+\frac{1}{d})(\frac{1}{b}-\frac{1}{c})$.
\end{center}
Therefore, we have that $b=c$. For the rest we proceed as in the proof of Lemma \ref{l42}. Let $Z$ be a connected component of $M^{\mathbb{Z}_b}$ that contains $p_1$ that has weight $b$. If $Z$ contains $p_3$ then $a \equiv d \mod b$ by Lemma \ref{l25}. If $Z$ contains $p_4$ then $a \equiv -d \mod b$.\end{proof}

With the above, we are ready to prove Theorem \ref{t11}.

\begin{proof} [Proof of Theorem \ref{t11}] This follows from Lemma \ref{l41}, Lemma \ref{l42}, and Lemma \ref{l43}. \end{proof}

\begin{rem} Given a circle action on a compact almost complex manifold $M$ with a discrete fixed point set, one may wonder if Theorem \ref{t22} gives all the information on the fixed point data, as it does, for instance for semi-free symplectic circle actions (Theorem 3.3 of \cite{L}) or when there are two fixed points (Theorem 2.8 of \cite{J3}). When $\dim M=4$ and there are 4 fixed points, from Lemma \ref{l41} one can check that Theorem \ref{t22} tells us the fixed point data is $\{a,b\},\{-a,b\},\{-b,c\},\{-b,-c\}$ for some positive integers $a,b,c$. However, Theorem \ref{t22} does not tell us that we must have either $a \equiv c \mod b$ or $a \equiv -c \mod b$. \end{rem}

\section{6-dimension with 4 fixed points: the case that $\mathrm{Todd}(M)=1$}

In this section, we classify the fixed point data of a circle action on a 6-dimensional almost complex manifold $M$ with 4 fixed points, when the Todd genus of $M$ is 1. First, we consider a multigraph associated to $M$ in the sense of Lemma \ref{l35}. And then in each case, we classify the fixed point data.

\begin{lem} \label{l51}
Let the circle act effectively on a 6-dimensional compact almost complex manifold with 4 fixed points, whose Todd genus is 1. Then $N^i=1$ for $i=0,1,2,3$. Let $p_i$ be a fixed point with $n_{p_i}=i$, for $i=0,1,2,3$. Then there exist positive integers $a,b,c,d,e,f$ so that exactly one of the figures in Figure \ref{fig5} occurs as a multigraph associated to $M$ that satisfies the conditions in Lemma \ref{l35}. Alternatively, exactly one of the following holds for the multisets of weights at $p_i$:
\begin{enumerate}[(1)]
\item $\Sigma_{p_0}=\{a,b,c\}, \Sigma_{p_1}=\{-d,e,f\}, \Sigma_{p_2}=\{-e,-f,d\}, \Sigma_{p_3}=\{-a,-b,-c\}$.
\item $\Sigma_{p_0}=\{a,b,c\}, \Sigma_{p_1}=\{-c,d,e\}, \Sigma_{p_2}=\{-d,-e,f\}, \Sigma_{p_3}=\{-a,-b,-f\}$.
\item $\Sigma_{p_0}=\{a,b,c\}, \Sigma_{p_1}=\{-a,d,e\}, \Sigma_{p_2}=\{-b,-d,f\}, \Sigma_{p_3}=\{-c,-e,-f\}$.
\item $\Sigma_{p_0}=\{a,b,c\}, \Sigma_{p_1}=\{-c,d,e\}, \Sigma_{p_2}=\{-a,-b,f\}, \Sigma_{p_3}=\{-d,-e,-f\}$.
\end{enumerate}
\end{lem}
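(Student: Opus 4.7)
The plan is to fix the distribution of fixed points by the index $n_p$, set up a small linear system for the edge counts in any multigraph from Lemma \ref{l35}, and discard the solutions that violate the ``forward-edge'' constraint coming from Lemma \ref{l35}(1). First, Theorem \ref{t22} with $n=3$ gives $N^0=N^3$ and $N^1=N^2$; together with the hypothesis $\mathrm{Todd}(M)=N^0=1$ and the total $N^0+N^1+N^2+N^3=4$, this forces $N^i=1$ for every $i\in\{0,1,2,3\}$, so there is a unique $p_i$ with $n_{p_i}=i$.

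Next, fix a multigraph associated to $M$ satisfying Lemma \ref{l35} and let $e_{ij}$ denote the number of edges from $p_i$ to $p_j$. Since $p_0$ has no negative weight, $p_3$ has no positive weight, and the graph has no loops, only $e_{01},e_{02},e_{03},e_{12},e_{13},e_{21},e_{23}$ can be nonzero. Matching the outgoing valence $3-i$ at $p_i$ with the incoming valence $j$ at $p_j$ produces a linear system whose nonnegative integer solutions are parametrized, by elementary substitution, as $e_{01}=e_{23}=1-e_{21}$, $e_{02}=e_{13}=2-e_{12}$, $e_{03}=e_{21}+e_{12}$ with $(e_{21},e_{12})\in\{0,1\}\times\{0,1,2\}$, giving six candidate tuples.

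To eliminate the bad ones, I apply Lemma \ref{l35}(1): every edge whose label is at most the second smallest positive weight value must go from $p_i$ to $p_{i+1}$. The two smallest positive weight values jointly contribute multiplicity at least two to the list of positive weights (either they are distinct and each appears with multiplicity $\geq 1$, or they coincide with multiplicity $\geq 2$). Consequently $e_{01}+e_{12}+e_{23}\geq 2$; evaluated on the parametrization this becomes $2-2e_{21}+e_{12}\geq 2$, excluding the tuples $(e_{21},e_{12})=(1,0)$ and $(1,1)$. In each of the four surviving tuples $(0,0),(0,1),(0,2),(1,2)$, I assign fresh variables $a,\ldots,f$ to the six edges and read $\Sigma_{p_i}$ off from the outgoing-positive/incoming-negative incidences at each vertex; the resulting weight multisets match, in the order of the parametrization, cases (4), (3), (2), (1) of the statement, with corresponding graphs as in Figure \ref{fig5}.

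The genuinely delicate step is the forward-edge lower bound $e_{01}+e_{12}+e_{23}\geq 2$; the rest is valence bookkeeping on the linear system and routine relabeling of edges.
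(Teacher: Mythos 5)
Your proposal is correct and follows essentially the same route as the paper: establish $N^i=1$ from Theorem \ref{t22} and Lemma \ref{l24}, enumerate the possible loop-free multigraphs via the in/out-degree constraints at the $p_i$ (the paper tracks only the edge counts $m_i$ from $p_0$, while you solve the full linear system in the $e_{ij}$, which amounts to the same enumeration), and discard the configurations with fewer than two edges of the form $p_i\to p_{i+1}$ using Lemma \ref{l35}(1), exactly as the paper does to exclude the cases with $m_1=0$ and $m_2>0$. Your justification of the bound $e_{01}+e_{12}+e_{23}\geq 2$ via the two smallest positive weight occurrences is sound and matches the paper's (more tersely stated) argument.
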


\begin{figure}
\centering
\begin{subfigure}[b][7cm][s]{.2\textwidth}
\centering
\vfill
\begin{tikzpicture}[state/.style ={circle, draw}]
\node[state] (a) {$p_0$};
\node[state] (b) [above=of a] {$p_1$};
\node[state] (c) [above=of b] {$p_2$};
\node[state] (d) [above=of c] {$p_3$};
\path (a) [->] [bend left =60] edge node[right] {$a$} (d);
\path (a) [->] [bend left =40] edge node [right] {$b$} (d);
\path (a) [->] [bend left =25] edge node [right] {$c$} (d);
\path (b) [->] [bend right =30] edge node [right] {$f$} (c);
\path (b) [->] edge node [right] {$e$} (c);
\path (c) [->] [bend right =30] edge node [right] {$d$} (b);
\end{tikzpicture}
\vfill
\caption{Case 1}\label{fig5-1}
\vspace{\baselineskip}
\end{subfigure}\qquad
\begin{subfigure}[b][7cm][s]{.2\textwidth}
\centering
\vfill
\begin{tikzpicture}[state/.style ={circle, draw}]
\node[state] (a) {$p_0$};
\node[state] (b) [above=of a] {$p_1$};
\node[state] (c) [above=of b] {$p_2$};
\node[state] (d) [above=of c] {$p_3$};
\path (a) [->] [bend left =40] edge node[right] {$a$} (d);
\path (a) [->] [bend left =20] edge node [right] {$b$} (d);
\path (a) [->] edge node [right] {$c$} (b);
\path (b) [->] [bend left=20] edge node [right] {$d$} (c);
\path (b) [->] [bend right =20]  edge node [right] {$e$} (c);
\path (c) [->] edge node [right] {$f$} (d);
\end{tikzpicture}
\vfill
\caption{Case 2}\label{fig5-2}
\vspace{\baselineskip}
\end{subfigure}\qquad
\begin{subfigure}[b][7cm][s]{.2\textwidth}
\centering
\vfill
\begin{tikzpicture}[state/.style ={circle, draw}]
\node[state] (a) {$p_0$};
\node[state] (b) [above=of a] {$p_1$};
\node[state] (c) [above=of b] {$p_2$};
\node[state] (d) [above=of c] {$p_3$};
\path (a) [->] edge node[right] {$a$} (b);
\path (a) [->] [bend left =40] edge node [right] {$b$} (c);
\path (a) [->] [bend left =40]edge node [right] {$c$} (d);
\path (b) [->] edge node [right] {$d$} (c);
\path (b) [->] [bend right =30]  edge node [right] {$e$} (d);
\path (c) [->] edge node [right] {$f$} (d);
\end{tikzpicture}
\vfill
\caption{Case 3}\label{fig5-3}
\end{subfigure}
\begin{subfigure}[b][7cm][s]{.2\textwidth}
\centering
\vfill
\begin{tikzpicture}[state/.style ={circle, draw}]
\node[state] (a) {$p_0$};
\node[state] (b) [above=of a] {$p_1$};
\node[state] (c) [above=of b] {$p_2$};
\node[state] (d) [above=of c] {$p_3$};
\path (a) [->] [bend left =60] edge node[right] {$a$} (c);
\path (a) [->] [bend left =35] edge node [right] {$b$} (c);
\path (a) [->] edge node [right] {$c$} (b);
\path (b) [->] [bend right=60] edge node [right] {$e$} (d);
\path (b) [->] [bend right =30]  edge node [right] {$d$} (d);
\path (c) [->] edge node [right] {$f$} (d);
\end{tikzpicture}
\vfill
\caption{Case 4}\label{fig5-4}
\vspace{\baselineskip}
\end{subfigure}\qquad
\caption{6 dimension, 4 fixed points, $\mathrm{Todd}(M)=1$.}\label{fig5}
\end{figure}
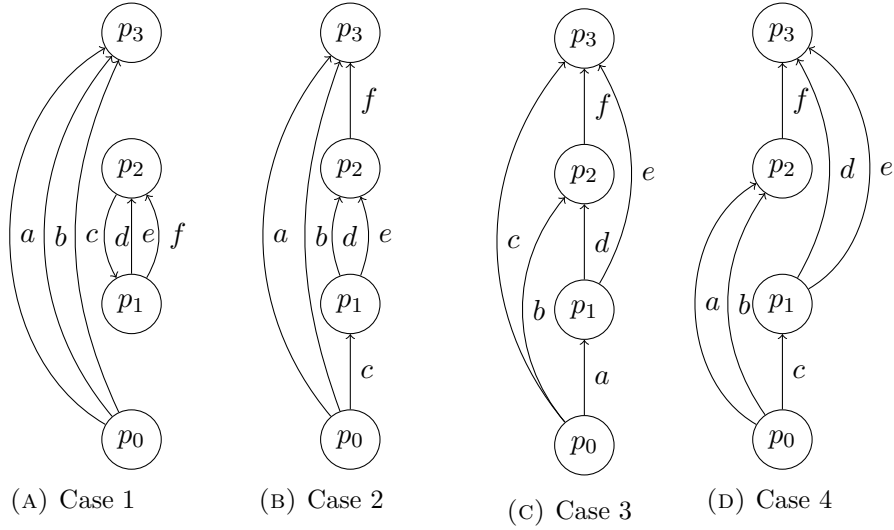

\begin{proof}
By Theorem \ref{t22}, $1=\mathrm{Todd}(M)=N^0=N^3$ and $N^1=N^2$. Hence, $N^i=1$ for $0 \leq i \leq 3$. Let $p_i$ be fixed points with $n_{p_i}=i$, for $0 \leq i \leq 3$. There exists a multigraph associated to $M$ that satisfies properties in Lemma \ref{l35}. Let $m_i$ be the number of edges between $p_0$ and $p_i$, for $i=1,2,3$. Then we have $m_1+m_2+m_3=3$, $0 \leq m_i$ for all $i$, $m_1 \leq 1$, and $m_2 \leq 2$.

First, suppose that $m_1=0$. If $m_2>0$, then any multigraph associated to $M$ without any loop does not have at least two edges $\epsilon$ with $n_{i(\epsilon)}+1=n_{t(\epsilon)}$, which contradicts the first property of Lemma \ref{l35}. Therefore, if $m_1=0$, then $m_2=0$ and $m_3=3$. This is the first case of the lemma (Figure \ref{fig5-1}).

Second, suppose that $m_1=1$. Then either $m_2=0$, $m_2=1$, or $m_2=2$. This is the second (Figure \ref{fig5-2}), third (Figure \ref{fig5-3}), and fourth case (Figure \ref{fig5-4}) of the lemma, respectively. \end{proof}

Let $l$ be the largest weight that occurs. In the next lemma, we prove that the maximum dimension of $M^{\mathbb{Z}_l}$ is 2. In other words, at each fixed point, there cannot be more than one weight that is divisible by $l$.

\begin{lem} \label{l52}
Let the circle act effectively on a 6-dimensional compact almost complex manifold with 4 fixed points and with $\mathrm{Todd}(M)=1$. Let $l$ be the largest weight among all the weights. Then $l>1$ and the dimension of a connected component of $M^{\mathbb{Z}_l}$ that contains a fixed point having the weight $l$ is two. In particular, any fixed point cannot have more than one weight that is a multiple of $l$.
\end{lem}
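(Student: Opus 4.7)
The plan is to combine the structure forced by Lemma \ref{l28} with the primitivity of small weights (Lemma \ref{l31}) to rule out large components of $M^{\mathbb{Z}_l}$. First, $l > 1$ is immediate: if $l = 1$, all weights are $\pm 1$ and Lemma \ref{l27} would demand at least $2^{3} = 8$ fixed points, contradicting the hypothesis of $4$ fixed points.

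For the dimension claim, let $Z$ be a connected component of $M^{\mathbb{Z}_l}$ containing a fixed point with weight $l$, and write $\dim Z = 2m$ with $m \geq 1$. Lemma \ref{l28} provides $k \cdot 2^m$ fixed points on $Z$, so $m = 3$ is excluded by $|M^{S^1}| = 4$, and the remaining task is to rule out $m = 2$. In that case $k = 1$, all four fixed points lie in $Z$, and $N_Z^0 = N_Z^2 = 1$, $N_Z^1 = 2$; each fixed point has two $T_pZ$-weights equal to $\pm l$ together with one extra weight. Using $\mathrm{Todd}(M) = 1$ and Theorem \ref{t22} to obtain $N^i = 1$ for $i = 0, 1, 2, 3$, I label $p_i$ so that $n_{p_i} = i$; the relation $n_{p_i} - n_{p_i}^Z \in \{0, 1\}$ (according to the sign of the extra weight) combined with the $N_Z^i$ counts uniquely forces $n_{p_0}^Z = 0$, $n_{p_1}^Z = 1$, $n_{p_2}^Z = 1$, $n_{p_3}^Z = 2$, yielding
\begin{equation*}
\Sigma_{p_0} = \{l, l, x_0\}, \quad \Sigma_{p_1} = \{l, -l, x_1\}, \quad \Sigma_{p_2} = \{l, -l, -x_2\}, \quad \Sigma_{p_3} = \{-l, -l, -x_3\}
\end{equation*}
for positive integers $x_i < l$.

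Ruling out this last configuration is the main obstacle, and it is killed by a single invocation of Lemma \ref{l31}: the positive weights of $M$, counted with multiplicity, form the list $l, l, l, l, x_0, x_1$, so since $x_0 < l$ the value $x_0$ is either the smallest or the second smallest positive weight. Lemma \ref{l31} then equates the number of occurrences of $+x_0$ at fixed points with $n_p = 0$ with the number of occurrences of $-x_0$ at fixed points with $n_p = 1$; the former is at least one (from $p_0$), while the latter is zero, because the only negative weight at $p_1$ is $-l$ and $l \neq x_0$. This contradiction rules out $m = 2$, so $\dim Z = 2$.

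For the ``in particular'' clause, let $p$ be any fixed point possessing $m \geq 1$ weights that are multiples of $l$; since $l$ is the largest weight these weights are necessarily $\pm l$, and $p$ lies in a connected component $Z'$ of $M^{\mathbb{Z}_l}$ of dimension $2m$. Applying Lemma \ref{l28} to $Z'$ gives $N_{Z'}^0 \geq 1$, so $Z'$ contains a fixed point all of whose $T Z'$-weights are $+l$; in particular this fixed point has weight $l$, and the main claim already established forces $\dim Z' = 2$, whence $m = 1$.
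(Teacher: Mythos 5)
Your proof is correct, and it closes the crucial case $\dim Z=4$ by a genuinely different (and shorter) route than the paper. The paper, after reaching the same weight configuration $\{l,l,a\},\{-l,l,b\},\{-l,-c,l\},\{-d,-l,-l\}$ via Lemma \ref{l28}, invokes Lemma \ref{l25} to force $a=b=l-c=l-d$, then Lemma \ref{l23} to get $l=2a$, then effectiveness to pin down $a=1$, and finally computes $\chi^0(M)$ explicitly from Theorem \ref{t22} to reach a contradiction. You instead observe that the extra weight $x_0$ at the point with $n_p=0$ is automatically the smallest or second smallest positive weight (the only positive weights are four copies of $l$ plus $x_0,x_1<l$), so the primitivity statement Lemma \ref{l31} applies with $i=0$ and immediately fails, since the unique point with $n_p=1$ has $-l$ as its only negative weight. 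This avoids the congruence bookkeeping, the appeal to effectiveness, and the rational-function computation entirely. Your treatment of $l>1$ also differs: the paper reads it off from the multigraphs of Lemma \ref{l51} (an edge with $n_{i(\epsilon)}+1\neq n_{t(\epsilon)}$ must carry a label exceeding the second smallest positive weight), whereas you use the count $k\cdot 2^3\geq 8$ from Lemma \ref{l27}, which is cleaner and does not even use $\mathrm{Todd}(M)=1$. Finally, you spell out the ``in particular'' clause, which the paper leaves implicit; your use of $N_{Z'}^0\geq 1$ to produce a point of $Z'$ with weight $+l$ is exactly the right way to reduce it to the main claim. All steps check out.
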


\begin{proof}
We can associate a multigraph that satisfies the conditions in Lemma \ref{l35}. By Lemma \ref{l51}, one of the figures in Figure \ref{fig7} occurs as a multigraph associated to $M$. Since there exists an edge $\epsilon$ with $n_{i(\epsilon)}+1 \neq n_{t(\epsilon)}$, it follows that $l>1$. Suppose that a fixed point $p$ has the weight $l$. Consider the set $M^{\mathbb{Z}_l}$ of points fixed by the $\mathbb{Z}_l$-action. Let $Z$ be a connected component of $M^{\mathbb{Z}_l}$ that contains $p$. Since $l>1$ and the action is effective, $2 \leq \dim Z \leq 4$. Suppose that $\dim Z=4$. By Lemma \ref{l28}, $Z$ contains $k \cdot 2^2$ fixed points for some positive integer $k$, i.e., $Z$ contains all the 4 fixed points. Moreover, $N_Z^i={2 \choose i}$ for $0 \leq i \leq 2$. It follows that the weights at the fixed points are then $\{l,l,a\}, \{-l,l,b\}, \{-l,-c,l\}, \{-d,-l,-l\}$ for some positive integers $a,b,c,d<l$. By Lemma \ref{l25}, the weights at any two fixed points are equal modulo $l$, since they all lie in $Z$. This implies that $a=b=l-c=l-d$, i.e., the weights are $\{l,l,a\}, \{-l,l,a\}, \{-l,a-l,l\}, \{a-l,-l,-l\}$. By Lemma \ref{l23}, $a=l-a$, i.e., $l=2a$. By the effectiveness of the action, this implies that $a=1$. The weights are then $\{2,2,1\}, \{-2,2,1\}, \{-1,-2,2\}, \{-1,-2,-2\}$. By Theorem \ref{t22},
\begin{center}
$\displaystyle 1=\chi^0(M)=\frac{1}{(1-t^2)^2(1-t)}+\frac{1}{(1-t^{-2})(1-t^2)(1-t)}+\frac{1}{(1-t^{-1})(1-t^{-2})(1-t^2)}+\frac{1}{(1-t^{-1})(1-t^{-2})(1-t^{-2})}=\frac{1-t^2+t^3-t^5}{(1-t)(1-t^2)^2}$,
\end{center}
which cannot hold. Therefore, $\dim Z=2$. \end{proof}

\begin{rem} \label{r53} To reduce the proof, we may reverse the circle action. By reversing a circle action we mean that $g\in S^1$ acts on $M$ by $g \cdot p = g^{-1} p$ for every $p \in M$. At each fixed point, this reverses the sign of each weight. For instance, suppose that we consider Case 2 in Lemma \ref{l51}. We begin by which weight is the largest weight. Suppose that $c$ is the largest weight. Then reverse the circle action. Then the weights are $\Sigma_{p_3}=\{a,b,f\},\Sigma_{p_2}=\{-f,d,e\},\Sigma_{p_1}=\{-d,-e,c\},\Sigma_{p_0}=\{-a,-b,-c\}$. The associated multigraph is the same, with $p_0$ and $p_3$ changed, $p_1$ and $p_2$ changed, and $c$ and $f$ changed. Therefore, once we deal with the case where $c$ is the largest weight, then we do not need to deal with the case where $f$ is the largest weight. We frequently use this phenomenon in the proof of Theorem \ref{t12}. \end{rem}

\begin{lem} \label{l53}
The first case in Lemma \ref{l51} does not hold.
\end{lem}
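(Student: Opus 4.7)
The plan is to rule out Case 1 of Lemma \ref{l51} by combining the multigraph constraint from Lemma \ref{l35}, the sphere analysis of Lemmas \ref{l52} and \ref{l25} for the largest weight, and the localization identity $\chi^0(M) = 1$ (Theorem \ref{t22}) expanded as a Laurent series at $t = 1$.

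First I would observe that in Figure \ref{fig5-1} the only edges $\epsilon$ with $n_{i(\epsilon)} + 1 = n_{t(\epsilon)}$ are the two labeled $e$ and $f$; by Lemma \ref{l35}(1) this forces the smallest and second smallest positive weights to lie in $\{e, f\}$, so $\max(e, f) < \min(a, b, c, d)$. Let $l$ be the largest weight and relabel $\{a, b, c\}$ so that $a = \max(a, b, c)$; then $l \in \{a, d\}$. I would next split into three subcases. If $l = d > a$, or if $l = a = d$ with the $\mathbb{Z}_a$-sphere pairing $(p_0, p_3), (p_1, p_2)$, then Lemma \ref{l52} together with Lemma \ref{l25} applied to the $M^{\mathbb{Z}_d}$-sphere joining two $\pm l$-fixed points forces $e + f \equiv 0 \mod d$; since $0 < e + f < 2d$ this gives $e + f = d$. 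Substituting $e + f = d$ into $\chi^0(M) = 1$ kills the $p_1 + p_2$ contribution and reduces the identity to the polynomial equation $t^a + t^b + t^c = t^{a+b} + t^{a+c} + t^{b+c}$, which has no positive integer solutions (the minimal exponent on the left is strictly smaller than that on the right). The remaining pairing in the case $l = a = d$ forces $\{b, c\} = \{e, f\}$ (since all four are less than $a$), contradicting $b, c > \max(e, f)$.

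The remaining subcase $l = a > d$ requires more algebra. Lemma \ref{l25} applied to the $M^{\mathbb{Z}_a}$-sphere joining $p_0$ and $p_3$ gives $\{b, c\} \equiv \{-b, -c\} \mod a$, hence $b + c = a$. Substituting $a = b + c$ into $\chi^0(M) = 1$ reduces it to the rational identity
\[
\frac{t^b + t^c}{(1-t^b)(1-t^c)} = \frac{t^d - t^{e+f}}{(1-t^d)(1-t^e)(1-t^f)}.
\]
Matching top-degree terms (with $b \leq c$ by relabeling) forces $b < c$, $d = b$, and $e + f > b$. The leading and next-order Laurent coefficients at $t = 1$ then yield $c(e+f-b) = 2ef$ and $b^2 + c^2 - 3bc + b(e+f) - ef = 0$. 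Eliminating $ef$ between these produces the quadratic $2c^2 - (5b + e + f) c + 2b(b + e + f) = 0$, whose discriminant is the perfect square $(3b - e - f)^2$, so $c = 2b$ or $c = (b + e + f)/2$. Plugging each back into $c(e+f-b) = 2ef$ produces $(b - e)(b - f) = 0$ and $(e - f)^2 = b^2$ respectively; both contradict $b > \max(e, f)$.

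I expect the main obstacle to be this last subcase: the leading Laurent constraint alone admits a one-parameter family of algebraic solutions, so one must combine it with the next-order constraint (equivalently with the Hirzebruch--Riemann--Roch identity $c_1 c_2[M] = 24$ coming from $\mathrm{Todd}(M) = 1$) to force $c$ into one of two explicit candidate values, each of which then fails against the strict inequality $b > \max(e, f)$.
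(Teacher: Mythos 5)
Your proof is correct as far as I can check, but it takes a substantially longer route than the paper's for no gain, because it misses one structural fact. The multigraph of Lemma \ref{l51} is constructed so as to satisfy \emph{both} conditions of Lemma \ref{l35}; since the $d$-edge joins $p_2$ ($n=2$) to $p_1$ ($n=1$) and $d>\max(e,f)$, where $\{e,f\}$ are the smallest and second smallest positive weights, condition (2) of Lemma \ref{l35} applies to the $d$-edge \emph{unconditionally}, giving $\Sigma_{p_1}\equiv\Sigma_{p_2}\bmod d$ and hence $e+f=d$ without any discussion of which weight is largest. The paper does exactly this: $e+f=d$ makes the $p_1,p_2$ contributions to $\chi^0(M)$ cancel, leaving $1=(1-t^{a+b+c})/((1-t^a)(1-t^b)(1-t^c))$, which is absurd. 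You only obtain the congruence $\Sigma_{p_1}\equiv\Sigma_{p_2}\bmod d$ through the largest-weight mechanism (Lemmas \ref{l52} and \ref{l25}), and so you are forced to treat the subcase $a>d$ by a separate two-stage Laurent expansion at $t=1$. That computation does check out: your leading coefficient $c(e+f-b)=2ef$ is correct, your second relation does vanish on the full solution variety of the identity (which reduces to the exponent-multiset equality $\{c,\,b+c,\,e+f,\,b+e+f\}=\{b+e,\,b+f,\,c+e,\,c+f\}$), the discriminant computation is right, and both terminal cases contradict $e,f<b$. A cleaner way to finish that subcase, if you keep your decomposition, is to cross-multiply the rational identity into the polynomial identity above and match exponents directly, which forces $c=2b$ with $b\in\{e,f\}$ (or $c\in\{e,f\}$), contradicting $e,f<b<c$ without any Laurent coefficients. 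In short: same ingredients (Lemma \ref{l35}(1), congruences mod large weights, rigidity of $\chi^0$), but the paper's single application of Lemma \ref{l35}(2) to the $d$-edge collapses all of your case analysis into five lines.
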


\begin{proof} The weights $e$ and $f$ are the only weights whose corresponding edge $\epsilon$ satisfies $n_{i(\epsilon)}+1=n_{t(\epsilon)}$. By Lemma \ref{l35}, this implies that $\{e,f\}$ are the first and second smallest positive weights and other positive weights are strictly bigger then $e,f$. In particular, $e<d$ and $f<d$. Therefore, by the second condition of Lemma \ref{l35} for the edge $\epsilon_d$ whose label (weight) is $d$, the weights at $p_1$ and $p_2$ are equal modulo $d$, i.e., $\{e,f,-d\} \equiv \{-e,-f,d\} \mod d$. These imply that $e+f=d$. By Theorem \ref{t22}, we have
\begin{center}
$\displaystyle \chi^0(M)=N^0=1=\frac{1}{(1-t^a)(1-t^b)(1-t^c)}+\frac{1}{(1-t^{-e-f})(1-t^e)(1-t^f)}+\frac{1}{(1-t^{e+f})(1-t^{-e})(1-t^{-f})}+\frac{1}{(1-t^{-a})(1-t^{-b})(1-t^{-c})}=\frac{1}{(1-t^a)(1-t^b)(1-t^c)}-\frac{t^{e+f}}{(1-t^{e+f})(1-t^e)(1-t^f)}+\frac{t^{e+f}}{(1-t^{e+f})(1-t^{e})(1-t^{f})}-\frac{t^{a+b+c}}{(1-t^{a})(1-t^{b})(1-t^{c})}=\frac{1-t^{a+b+c}}{(1-t^a)(1-t^b)(1-t^c)}$, 
\end{center}
which is impossible. \end{proof}

\begin{lem} \label{l54} Suppose that the second case in Lemma \ref{l51} holds. Then the multisets of weights are $\{1,2,3\}$, $\{-1,1,a\}$, $\{-1,-a,1\}$, and $\{-1,-2,-3\}$ for some positive integer $a$. This is the third case of Theorem \ref{t12}. \end{lem}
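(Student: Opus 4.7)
The plan is to apply Lemma \ref{l29} (the ABBV push-forward with $\alpha = 1$); for Case 2 it yields
\[
0 \;=\; \frac{1}{abc} - \frac{1}{cde} + \frac{1}{def} - \frac{1}{abf} \;=\; \left(\frac{1}{ab} - \frac{1}{de}\right)\left(\frac{1}{c} - \frac{1}{f}\right),
\]
so either $c = f$ or $ab = de$. I will pin down the weights in the $c=f$ branch (which yields the target) and rule out $ab = de$ with $c \neq f$.

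In the $c = f$ branch, assume WLOG $a \ge b$. The edges labelled $a$ and $b$ both connect $p_0$ (with $n_{p_0}=0$) to $p_3$ (with $n_{p_3}=3$), so by Lemma \ref{l35}(1) both $a, b$ strictly exceed the second smallest positive weight, and Lemma \ref{l35}(2) then yields the mod-$a$ multiset congruence $\{b,c\} \equiv \{-b,-c\} \pmod{a}$. One branch of this congruence forces $a = 2b = 2c$, in which case all three weights at $p_0$ are divisible by $b$; by effectiveness (otherwise $\mathbb{Z}_b$ would fix a $6$-dimensional connected submanifold, hence all of $M$) we need $b = 1$, but then Theorem \ref{t22} with $i=0$ reduces, after simplification, to the rational identity $(1-t^{d+e-1})(1-t) = 2(1-t^d)(1-t^e)$, which already fails at $t=0$. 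The other branch gives $a = b + c$; the analogous mod-$b$ computation then reduces to $b \mid 2c$, and the same effectiveness argument (together with direct $\chi^0 = 1$ contradictions to rule out the sporadic subcases $c \ge b$) forces $b = 2c$. Since the weights $\{3c, 2c, c\}$ at $p_0$ share the common factor $c$, effectiveness shrinks $c$ to $1$, so $a = 3$, $b = 2$, $c = f = 1$.

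With these values fixed, the identity $\chi^0(M) = 1$ from Theorem \ref{t22} collapses, after pairing the $p_0, p_3$ terms and the $p_1, p_2$ terms, to
\[
(1-t)(1 - t^{d+e-1}) \;=\; (1-t^d)(1-t^e),
\]
equivalently the multiset identity $\{d,e\} = \{1,\, d+e-1\}$, which forces one of $d, e$ to equal $1$ and the other to be a free positive integer $a$, matching the stated conclusion. For the $ab = de$, $c \neq f$ case, by Remark \ref{r53} I may assume $a$ is the largest weight after possibly reversing the action; applying Lemma \ref{l35}(2) to edge $a$ gives two subcases: one forces $a \mid b+c$ and $a \mid b+f$ with both sums strictly less than $2a$, hence $b+c = b+f = a$ so $c = f$ (a contradiction), while the other forces $a \mid 2b$ and $a \mid c+f$. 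Combined with Lemma \ref{l52} (no fixed point has two weights divisible by the largest weight $a$), this yields $a = 2b$, $c + f = 2b$, and $de = 2b^2$. The remaining mod conditions from the edges $b, c, d, e, f$, together with effectiveness and the $\chi^0(M) = 1$ identity, then rule out every surviving configuration by direct case analysis.

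The main obstacle is the combinatorial elimination in the $ab = de$ branch, where several edges each impose mod conditions whose joint infeasibility requires careful enumeration of subcases under the constraints $a = 2b$, $c+f = 2b$, $de = 2b^2$; by contrast, the heart of the $c = f$ case --- the collapse of $\chi^0(M) = 1$ to the clean multiset identity $\{d,e\} = \{1,\, d+e-1\}$ --- is strikingly short.
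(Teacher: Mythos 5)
Your opening factorization of Lemma \ref{l29} into $\left(\frac{1}{ab}-\frac{1}{de}\right)\left(\frac{1}{c}-\frac{1}{f}\right)=0$ is correct, and your final collapse of $\chi^0(M)=1$ to the multiset identity $\{d,e\}=\{1,\,d+e-1\}$ is a clean way to finish once $\{a,b,c\}=\{3,2,1\}$ and $c=f=1$ are in hand. But there are genuine gaps in getting there. In the $c=f$ branch you read the mod-$a$ congruence $\{a,b,c\}\equiv\{-a,-b,-c\}$ as forcing either $a=2b=2c$ or $a=b+c$; that reduction needs $b,c<a$, i.e.\ it needs $a$ to dominate the other weights, and you have only assumed $a\ge b$. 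Nothing at this stage rules out $c\ge a$ (or $d,e$ large), in which case $a\mid 2c$ or $a\mid b+c$ has many more solutions; indeed in the final answer the largest weight can be one of $d,e$ and arbitrarily large, so ``$a$ is largest'' is not automatic. The paper supplies exactly this missing control: it first fixes which weight is largest (the symmetries allow only $a\leftrightarrow b$, $d\leftrightarrow e$, and $c\leftrightarrow f$ under reversal, so the largest is WLOG one of $a,c,d$) and invokes Lemma \ref{l52} to bound the remaining weights by the largest one \emph{before} doing any modular arithmetic.

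The $ab=de$, $c\neq f$ branch has a second, independent problem: by Remark \ref{r53} reversing the action only exchanges $c$ and $f$ (and relabelling only exchanges $a\leftrightarrow b$ and $d\leftrightarrow e$), so you may \emph{not} assume $a$ is the largest weight there --- the largest could be $c$, $f$, $d$ or $e$, and those cases are never treated. Even in the sub-case you do set up ($a=2b$, $c+f=2b$, $de=2b^2$), the elimination is only asserted (``rule out every surviving configuration by direct case analysis''), not performed; since this branch meets the true solution set exactly when $c=f$, showing it is empty for $c\neq f$ genuinely requires the isotropy-weight arguments (Lemma \ref{l26}(3) together with the congruences of Lemma \ref{l35}) that the paper uses, not just $\chi^0(M)=1$. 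As written, the proposal is an outline whose two load-bearing steps --- bounding the weights before the congruence analysis, and the exhaustive elimination in the second branch --- are missing.
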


\begin{proof} By the symmetry between $a$ and $b$, $d$ and $e$, and by reversing the circle action (see Remark \ref{r53}), we may assume that one of the following holds for the largest weight:
\begin{enumerate}
\item $c$ is the largest weight.
\item $a$ is the largest weight.
\item $d$ is the largest weight.
\end{enumerate}

First, suppose that the case (1) holds. Since $\Sigma_{p_0}=\{a,b,c\}, \Sigma_{p_1}=\{-c,d,e\}$, and $c$ is the largest weight, by Lemma \ref{l52}, it follows that $a,b,d,e<c$. By the second condition of Lemma \ref{l35}, $\Sigma_{p_0}=\{a,b,c\} \equiv \{-c,d,e\}=\Sigma_{p_1} \mod c$. This implies that $\{a,b\}=\{d,e\}$. By the first condition of Lemma \ref{l35}, $a$ and $b$ are strictly biggest than the second smallest positive weight. This implies that $d$ and $e$ are also bigger than the second smallest positive weight. It follows that $f$ is the smallest positive weight. However, there is no second smallest positive weight, which is a contradiction. Therefore, $c$ cannot be the largest weight.

Second, suppose that the case (2) holds. By Lemma \ref{l52}, $b,c,f<a$. By the second condition of Lemma \ref{l35}, $\Sigma_{p_0}=\{a,b,c\} \equiv \{-a,-b,-f\}=\Sigma_{p_3} \mod a$. Then either
\begin{enumerate}[(a)]
\item $2b=a$ and $c+f=a$, or
\item $b+f=a$ and $b+c=a$.
\end{enumerate}

Assume that the case (a) holds. If $b=c$, then the weights at $p_0$ are $\{a,b,c\}=\{2b,b,b\}$. The effectiveness of the action implies that $b=1$. However, by the first condition of Lemma \ref{l35}, $b$ cannot be the smallest positive weight since its edge $\epsilon$ has $n_{i(\epsilon)}+3=n_{t(\epsilon)}$. Therefore, $b \neq c$. Since $c+f=a=2b$, this also implies that $f \neq b$. Since $p_0$ has $m_{p_0}^b=2$, by (3) of Lemma \ref{l26}, $p_1$ or $p_2$ must have two weights that are divisible by $b$. Since $c$ and $f$ are not multiples of $b$, $d$ and $e$ must be multiples of $b$. Then we have $m_{p_0}^b(-)=0$, $m_{p_1}^b(-)=0$, $m_{p_2}^b(-)=2$, and $m_{p_3}^b(-)=2$, which contradicts the second statement of (3) of Lemma \ref{l26}.

Assume that the case (b) holds. Similar to the case (a), we have that $b \neq c$. Suppose that $b>c$. Then by the second condition of Lemma \ref{l35} for $b$, $\Sigma_{p_0}=\{b+c,b,c\}\equiv\{-b-c,-b,-c\}=\Sigma_{p_3} \mod b$. This implies that $b=2c$. Then it follows that $a=3c$ and $\Sigma_{p_0}=\{3c,2c,c\}$. Since the action is effective, this implies that $c=1$. We show that at least one of $d$ or $e$ is equal to 1. For this, assume not. Since the biggest weight is $a=3$, $\{d,e\}=\{2,2\}, \{2,3\}$, or $\{3,3\}$. If it is $\{2,2\}$ or $\{3,3\}$, then by (3) of Lemma \ref{l26}, $p_0$ or $p_3$ must have precisely two weights that are multiples of 2 or 3, respectively. However, the weights at $p_0$ and $p_3$ are $\{1,2,3\}$ and $\{-1,-2,-3\}$, which is a contradiction. Assume that $\{d,e\}=\{2,3\}$. Without loss of generality, let $e=3$. Then by the second condition of Lemma \ref{l35} for $e=3$, we have $\Sigma_{p_1}=\{-1,2,3\} \equiv \{-2,-3,1\}=\Sigma_{p_2} \mod 3$. However, this cannot hold. Therefore, at least one of $d$ or $e$ is equal to 1. This is the third case of Theorem \ref{t12}.

Next, suppose that $c>b$. Since $c=f$ and $b<c$, by Lemma \ref{l35} it implies that $\{d,e\}$ must be the smallest and the second smallest positive weights. By the second condition of Lemma \ref{l35} for $c$, $\Sigma_{p_0}=\{b+c,b,c\}\equiv\{-c,d,e\}=\Sigma_{p_1} \mod c$, but this cannot hold.


Third, suppose that the case (3) holds. By Lemma \ref{l52}, $c,e,f<d$. By the second condition of Lemma \ref{l35}, $\Sigma_{p_1}=\{-c,d,e\} \equiv \{-d,-e,f\}=\Sigma_{p_2} \mod d$. This implies that either
\begin{enumerate}[(a)]
\item $c=e$ and $e=f$, or
\item $c+f=d$ and $2e=d$.
\end{enumerate}

Assume that the case (a) holds, i.e., $c=e=f$. Then $c$ is either the smallest positive weight or the second smallest positive weight. Suppose that $c>1$. Then we have $m_{p_i}^c=2$ for $i=1,2$. By (3) of Lemma \ref{l26}, it follows that $m_{p_0}^c=2$ or $m_{p_3}^c=2$. This implies that exactly one of $a$ and $b$ is a multiple of $c$. Without loss of generality, let $a=kc$ for some positive integer $k$. If we apply Lemma \ref{l29} to the induced action of $S^1=S^1/\mathbb{Z}_c$ on $M^{\mathbb{Z}_c}$, we have
\begin{center}
$\displaystyle 0=\sum_{p \in (M^{\mathbb{Z}_c})^{S^1}}\frac{1}{\prod w_p^i}=\frac{1}{kc^2}-\frac{1}{c^2}-\frac{1}{c^2}+\frac{1}{kc^2}$.
\end{center}
Therefore, $k=1$ and $a=c$. However, this contradicts the first condition in Lemma \ref{l35} since $c$ is either the smallest or the second smallest positive weight, but the edge $\epsilon$ for the weight $a$ has $n_{i(\epsilon)}+3=n_{t(\epsilon)}$. Therefore, $c=e=f=1$.

Next, we show that $a \neq b$. For this, suppose that $a=b$. Then $m_{p_0}^b=2$ and hence by (3) of Lemma \ref{l26}, we must have $m_{p_1}^a=2$ or $m_{p_2}^a=2$. However, since $c=e=f=1$, $p_2$ and $p_3$ cannot have two weights that are divisible by $a$. Therefore, $a \neq b$. Without loss of generality, let $a>b$. By the second condition of Lemma \ref{l35} for $a$, $\Sigma_{p_0}=\{a,b,1\} \equiv \{-a,-b,-1\}=\Sigma_{p_3} \mod a$. Note that $a>b>1$ and hence $a>2$. This implies that $a=b+1$. Next, By the second condition of Lemma \ref{l35} for $b$, we have $\Sigma_{p_0}=\{b+1,b,1\} \equiv \{-b-1,-b,-1\}=\Sigma_{p_3} \mod b$. It follows that $b=2$. This is the third case of Theorem \ref{t12}.

Assume that the case (b) holds. If $c=f$, then the weights at $p_1$ are $\{-c,2c,c\}$. Since the action is effective, this implies that $c=1$. By the second condition of Lemma \ref{l35}, this implies that $a=b=2$, since the biggest weight is 2. Then we have $m_{p_0}^2=2$, $m_{p_1}^2=1$, $m_{p_2}^2=1$, and $m_{p_3}^2=2$, which contradicts (3) of Lemma \ref{l26}.

It follows that $c \neq f$. Without loss of generality, by reversing the circle action (see Remark \ref{r53}), we may assume that $c>f$. By the first condition of Lemma \ref{l35}, two of $c,d,e,f$ are the smallest and the second smallest positive weights. On the other hand, $d$ is the biggest weight, $d=2e$, $d=c+f$, and $c>f$. This implies that $f$ is the smallest positive weight and $e$ is the second smallest positive weight. The second condition of Lemma \ref{l35} also implies that $a>e=\frac{d}{2}$ and $b>e=\frac{d}{2}$.

Suppose that $a=b$. Then we have $m_{p_0}^a=2$. By (3) of Lemma \ref{l26}, this implies that $m_{p_1}^a=2$ or $m_{p_2}^a=2$. It follows that $d$ and $e$ must be divisible by $a$, which is a contradiction since $a>e$.

Therefore, $a \neq b$. Without loss of generality, assume that $a>b$. By the second condition of Lemma \ref{l35} for $a$, we have $\Sigma_{p_0}=\{a,b,c\}\equiv\{-a,-b,-f\}=\Sigma_{p_3} \mod a$. Then one of the following holds:
\begin{enumerate}[(i)]
\item $2b=a$ and $c+f=a$.
\item $b+f=a$ and $b+c=a$.
\end{enumerate}
The case (i) is impossible since $a\leq d$ and $\frac{d}{2}<b$. The case (ii) is also impossible since $c \neq f$. \end{proof}

\begin{lem} \label{l55} Suppose that the third case in Lemma \ref{l51} holds. Then the multisets of weights are either
\begin{enumerate}[(1)]
\item $\{a,b,c\},\{-a,b-a,c-a\},\{-b,a-b,c-b\},\{-c,a-c,b-c\}$ for some positive integers $a,b,c$, or
\item $\{a,a+b,a+2b\}$, $\{-a,b,a+2b\}$, $\{-a-2b,-b,a\}$, $\{-a-2b,-a-b,-a\}$ for some positive integers $a,b$.
\end{enumerate}
This is the first case or the second case of Theorem \ref{t12}. \end{lem}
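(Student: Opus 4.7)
The plan is to pinpoint the largest weight $l$ among $\{a,b,c,d,e,f\}$ and perform case analysis, using Lemma \ref{l52} to bound the other weights, Lemma \ref{l35}(2) to obtain modular relations from the edge carrying $l$, and Lemma \ref{l29} together with Theorem \ref{t22} to close cases. A key preliminary observation is that reversing the circle action (Remark \ref{r53}) interchanges $p_0 \leftrightarrow p_3$ and $p_1 \leftrightarrow p_2$, hence swaps the edge labels $a \leftrightarrow f$ and $b \leftrightarrow e$ while fixing $c$ and $d$; so without loss of generality $l \in \{a,b,c,d\}$.

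First I would rule out $l = a$. Lemma \ref{l52} forces $b,c,d,e,f < a$, so the modular relation on edge $a$ becomes the multiset identity $\{b,c\} = \{d,e\}$. In the subcase $b = e$, $c = d$, direct evaluation of $\chi^0(M) = 1$ via Theorem \ref{t22} (using the identity $\frac{1}{1-t^a} + \frac{1}{1-t^{-a}} = 1$) telescopes to $\frac{1+t^{b+c}}{(1-t^b)(1-t^c)} = 1$, giving the impossibility $t^b + t^c = 0$. In the subcase $b = d$, $c = e$, Lemma \ref{l29} immediately forces $b = c$, and the resulting $\chi^0(M)$ identity $\frac{1+t^{2b}}{(1-t^b)^2} = 1$ is again impossible. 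The case $l = d$ is handled analogously: the modular relation on edge $d$ forces either $a = b$, $e = f$ (in which Lemma \ref{l29} forces $a = e$, and $\chi^0(M) = 1$ unwinds to $(t^a + t^c)(1 - t^a) = 0$, a contradiction) or $a + f = b + e = d$, from which Lemma \ref{l29} produces the quadratic $d^2 - (a+b)d + c(a - b) = 0$; this residual case is then closed by invoking Lemma \ref{l35}(1) (which forces the two smallest positive weights to be $\{a,f\}$, since they must come from the jump-one edges $\{a, d, f\}$ and $d = l$) together with the modular constraints from Lemma \ref{l35}(2) on edges $b$, $c$, and $e$.

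Next I would handle $l = c$. Lemma \ref{l52} gives $a, b, d, e, f < c$, and edge $c$'s modular relation yields $\{a, b\} \equiv \{-e, -f\} \pmod{c}$, which (all absolute values lying in $(0,c)$) becomes either $a + e = c$ and $b + f = c$, or $a + f = c$ and $b + e = c$. By a further label swap I may assume the first; without loss of generality $a < b$. Invoking Lemma \ref{l35}(2) on the next-largest among the jump-two edges $b, e$ then pins down $d = b - a$, producing the multisets $\{a,b,c\}, \{-a, b-a, c-a\}, \{-b, a-b, c-b\}, \{-c, a-c, b-c\}$ of case (1).

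Finally, for $l = b$ (with $l \neq c$), the modular relation on edge $b$ gives $\{a, c\} \equiv \{-d, f\} \pmod{b}$, and a finer case analysis using Lemma \ref{l29} together with modular constraints on the remaining edges—noting that in the quadric family one has the coincidence $b = e$, so Lemma \ref{l52} is tight—yields the complex-quadric multisets $\{a, a+b, a+2b\}, \{-a, b, a+2b\}, \{-a-2b, -b, a\}, \{-a-2b, -a-b, -a\}$ of case (2). The main technical obstacle throughout is the residual subcase $a + f = b + e = d$ of $l = d$: Lemma \ref{l29} alone leaves a one-parameter family of integer solutions, and one must combine it with Lemma \ref{l35}(1) and the modular constraints on multiple large edges to force a contradiction.
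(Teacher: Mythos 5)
Your overall architecture matches the paper's: reduce to $l\in\{a,b,c,d\}$ via the reversal symmetry $a\leftrightarrow f$, $b\leftrightarrow e$, use Lemma \ref{l52} and Lemma \ref{l35}(2) on the edge carrying $l$, and close cases with Lemma \ref{l29} and the $\chi^0(M)=1$ identity. Your treatment of $l=a$ is correct (more computational than the paper's, which kills this case instantly by noting that the two smallest positive weights must label the jump-one edges $a,d,f$, so $d,f$ are strictly smaller than $b,c,e$, contradicting $\{b,c\}=\{d,e\}$). But there is a genuine error in the case $l=c$: you claim that ``by a further label swap'' you may assume $a+e=c$, $b+f=c$ rather than $a+f=c$, $b+e=c$. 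No such swap exists. The labels sit on edges with fixed jump structure ($a,f$ are jump-one edges, $b,e$ are jump-two edges), and the only available symmetry, reversing the action, sends $a\mapsto f$, $b\mapsto e$, $c\mapsto c$, which preserves each of the two subcases rather than exchanging them. The omitted subcase $a+f=c$, $b+e=c$ is not vacuous: the paper needs a full paragraph to exclude it (reduce to $a$ being at most the second smallest weight, apply Lemma \ref{l35}(2) to the edge labelled $f=c-a$ to force $2b=c$ and $a=d$, then contradict Lemma \ref{l26}(3) for $w=b$). Relatedly, your ``without loss of generality $a<b$'' in the retained subcase is also not a symmetry; the inequality has to be derived (the paper obtains $b=a+d$ by a term-matching argument in the $\chi^0(M)=1$ identity, which also settles the residue-matching ambiguity your modular argument on the edge $b$ leaves open).

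Two further points. In the subcase $a+f=b+e=d$ of $l=d$, you already state the fact that closes it: Lemma \ref{l35}(1) forces $\{a,f\}$ to be the two smallest positive weights, so $b$ and $e$ are strictly larger and $d=b+e>a+f=d$ at once. Yet you present this case as a one-parameter family needing the quadratic from Lemma \ref{l29} plus modular constraints on $b$, $c$, $e$; that machinery is unnecessary and, as written, the contradiction is never actually exhibited. Finally, the case $l=b$, which is where conclusion (2) (the complex quadric) actually arises, is only gestured at: you must split $\{a,c\}\equiv\{-d,f\}\pmod{b}$ into $a+d=b,\ c=f$ (excluded by applying Lemma \ref{l26}(3) to $c$, since then no other weight can be a multiple of $c$) versus $a=f,\ c+d=b$ (where Lemma \ref{l29} gives $e=b=c+d$ and the $\chi^0(M)=1$ identity gives $c=a+d$). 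As it stands the proposal omits one necessary subcase entirely and leaves the two cases that produce and exclude the hardest configurations unproved.
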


\begin{proof}
By reversing the circle action if necessary (Remark \ref{r53}), we may assume that one of the following holds for the largest weight:
\begin{enumerate}
\item $a$ is the biggest weight.
\item $b$ is the biggest weight.
\item $c$ is the biggest weight.
\item $d$ is the biggest weight.
\end{enumerate}

First, suppose that the case (1) holds. By Lemma \ref{l52}, $b,c,d,e<a$. Since $a,d,f$ are the only weights whose edges $\epsilon$ satisfy $n_{i(\epsilon)}+1=n_{t(\epsilon)}$, by Lemma \ref{l35}, it follows that $\{d,f\}$ are the smallest and the second smallest positive weights. Moreover, $d$ and $f$ are strictly smaller than $a,b,c$, and $e$. By the second condition of Lemma \ref{l35} for $a$, we have $\Sigma_{p_0}=\{a,b,c\} \equiv \{-a,d,e\}=\Sigma_{p_1} \mod a$. This implies that $b=d$ or $c=d$, which is a contradiction.

Second, suppose that the case (2) holds. By Lemma \ref{l52}, $a,c,d,f<b$. By the second condition of Lemma \ref{l35} for $b$, we have $\{a,b,c\} \equiv \{-b,-d,f\} \mod b$. It follows that either
\begin{enumerate}[(a)]
\item $a+d=b$ and $c=f$.
\item $a=f$ and $c+d=b$.
\end{enumerate}

Assume that the case (a) holds. Since $c=f$, it follows that $f$ cannot be the smallest or the second smallest positive weight. Therefore $\{a,d\}$ are the smallest and the second smallest positive weights. Therefore, by Lemma \ref{l35}, $a<c$ and $d<c$. Next, since $\Sigma_{p_3}=\{-c,-e,-c(=-f)\}$ and the action is effective, $e \neq c$ and $m_{p_3}^c=2$. By (3) of Lemma \ref{l26}, at least two of $p_1$, $p_2$, and $p_3$ must have $m_p^c=2$. On the other hand, $2c>b$ where $b$ is the largest weight, $a<c$, $d<c$, and $e \neq c$. These imply that none of $a,b,d$, and $e$ can be a multiple of $c$ and this leads to a contradiction. Therefore, the case (a) cannot hold.

Assume that the case (b) holds. By Lemma \ref{l29}, we have
\begin{center}
$\displaystyle 0=\frac{1}{a(c+d)c}+\frac{1}{(-a)de}+\frac{1}{(-c-d)(-d)a}+\frac{1}{(-c)(-e)(-a)}=(\frac{1}{ac}+\frac{1}{ad})(\frac{1}{c+d}-\frac{1}{e})$.
\end{center}
This implies that $c+d=e$. Next, by Theorem \ref{t22},
\begin{center}
$\displaystyle \chi^0(M)=N^0=1=\frac{1}{(1-t^a)(1-t^c)(1-t^{c+d})}-\frac{t^a}{(1-t^a)(1-t^d)(1-t^{c+d})}+\frac{t^{c+2d}}{(1-t^a)(1-t^d)(1-t^{c+d})}-\frac{t^{a+2c+d}}{(1-t^a)(1-t^c)(1-t^{c+d})}$.
\end{center}
If we multiply the equation by the least common multiple of the denominators and simplify, we get $0=-t^c+t^{a+d}+t^{2c+d}-t^{a+c+2d}$. It follows that $c=a+d$. The weights at the fixed points are then
\begin{center}
$\Sigma_{p_0}=\{a,a+d,a+2d\},\Sigma_{p_1}=\{-a,d,a+2d\},\Sigma_{p_2}=\{-a-2d,-d,a\},\Sigma_{p_3}=\{-a-2d,-a-d,-a\}$.
\end{center}
This is the second case of Theorem \ref{t12}.

Third, suppose that the case (3) holds. By Lemma \ref{l52}, we have $a,b,e,f<c$. By the second condition of Lemma \ref{l35} for $c$, we have $\{a,b,c\} \equiv \{-c,-e,-f\} \mod c$. It follows that either
\begin{enumerate}[(a)]
\item $a+e=c$ and $b+f=c$.
\item $a+f=c$ and $b+e=c$.
\end{enumerate}

Suppose that the case (a) holds. Then we have $e=c-a$ and $f=c-b$. By Theorem \ref{t22}, we have
\begin{center}
$\displaystyle \chi^0(M)=1=\frac{1}{(1-t^a)(1-t^b)(1-t^{c})}-\frac{t^a}{(1-t^a)(1-t^d)(1-t^{e})}+\frac{t^{b+d}}{(1-t^b)(1-t^d)(1-t^{f})}-\frac{t^{c+e+f}}{(1-t^c)(1-t^e)(1-t^{f})}=[(\sum_{i=0}^{\infty}t^{ia})(\sum_{i=0}^{\infty}t^{ib})(\sum_{i=0}^{\infty}t^{ic})]-[t^a(\sum_{i=0}^{\infty}t^{ia})(\sum_{i=0}^{\infty}t^{id})(\sum_{i=0}^{\infty}t^{ie})]+[t^{b+d}(\sum_{i=0}^{\infty}t^{ib})(\sum_{i=0}^{\infty}t^{id})(\sum_{i=0}^{\infty}t^{if})]-[t^{c+e+f}(\sum_{i=0}^{\infty}t^{ic})(\sum_{i=0}^{\infty}t^{ie})(\sum_{i=0}^{\infty}t^{if})]$.
\end{center}
In the equation, constant terms match and $t^{ia}$ in the first bracket cancels out with $-t^{ia}$ in the second bracket for each $i>0$. Consider $t^b$ in the first bracket. Since the terms with smallest exponents in the third and the fourth brackets are $t^{b+d}$ and $t^{c+e+f}$, $t^b$ cannot cancel out with any term in those brackets. This implies that $t^b$ in the first bracket has to cancel out by either $-t^{a+d}$ or $-t^{a+e}$ in the second bracket. However, $a+e=c$ is the largest weight and $b<c$, and hence $t^b$ cancels out with $-t^{a+d}$. It follows that $b=a+d$. The weights at the fixed points are then
\begin{center}
$\Sigma_{p_0}=\{a,b,c\},\Sigma_{p_1}=\{-a,b-a,c-a\},\Sigma_{p_2}=\{-b,a-b,c-b\},\Sigma_{p_3}=\{-c,a-c,b-c\}$.
\end{center}
This is the first case of Theorem \ref{t12}.

Suppose that the case (b) holds. Since $a,d,f$ are the only weights whose edges $\epsilon$ satisfy $n_{i(\epsilon)}+1=n_{t(\epsilon)}$, it follows that one of $a$ and $f$ is at most the second smallest positive weight. By reversing the circle action if necessary (Remark \ref{r53}), we may assume that $a$ is at most the second smallest positive weight. By the second condition of Lemma \ref{l35}, it follows that $a<b$ and $a<e$. Since $a+f=c$ and $b+e=c$, we have $a<b<f<c$ and $a<e<f<c$. This also implies that $d$ is at most the second smallest positive weight. It follows that $d<b$ and $d<e$.

By Lemma \ref{l35} for $f=c-a$, we have $\Sigma_{p_2}=\{-b,-d,c-a(=f)\} \equiv \{-c+a(=-f),-c+b(=-e),-c\}=\Sigma_{p_3} \mod c-a(=f)$. Since $b<c$ and $c=a+(c-a)<b+(c-a)$, $-b \neq -c \mod c-a$. Therefore, it follows that $-b \equiv -c+b \mod c-a$ and $-d \equiv -c \mod c-a$, i.e., $b \equiv c-b \mod c-a$ and $d \equiv c \mod c-a$. Since $c=a+(c-a)<b+(c-a)$, it follows that $b=c-b$, i.e., $2b=c$. Moreover, since $d<c$ and $c-a>\frac{c}{2}$, we have that $d+(c-a)=c$, i.e., $a=d$. Then the weights at the fixed points are
\begin{center}
$\Sigma_{p_0}=\{a,b,2b(=c)\}, \Sigma_{p_1}=\{-a,a(=d),b(=b-c=e)\}, \Sigma_{p_2}=\{-a(=-d),-b,c-a(=f)\}, \Sigma_{p_3}=\{-2b(=-c),-b(=b-c=-e),a-c(=-f)\}$.
\end{center}
Since $m_{p_0}^b=2$, by (3) of Lemma \ref{l26}, either $m_{p_1}^b=2$ or $m_{p_2}^b=2$. However, since $a<b<f<2b$, this cannot hold. Therefore, the case (b) cannot hold.

Fourth, suppose that the case (4) holds. By Lemma \ref{l52}, $a,b,e,f<d$. By the second condition of Lemma \ref{l35} for $d$, we have $\Sigma_{p_1}=\{-a,d,e\} \equiv \{-b,-d,f\}=\Sigma_{p_2} \mod d$. It follows that one of the following holds:
\begin{enumerate}[(a)]
\item $a=b$ and $e=f$
\item $a+f=d$ and $e+b=d$.
\end{enumerate}
Since $a,d,f$ are the only weights whose edges $\epsilon$ satisfy $n_{i(\epsilon)}+1=n_{t(\epsilon)}$, it follows that $\{a,f\}$ are the smallest and the second smallest positive weights. In particular, $e$ and $b$ are strictly bigger than $a$ and $f$. It follows that $a \neq b$ by the first condition of Lemma \ref{l35} and the case (a) cannot hold. If the case (b) holds, we have $d=e+b>a+f=d$, which is a contradiction. The case (4) does not hold. \end{proof}

\begin{lem} \label{l56} The fourth case in Lemma \ref{l51} does not hold. \end{lem}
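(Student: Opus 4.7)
The plan is to rule out Case 4 by combining Lemma \ref{l29}, the modular constraints from Lemma \ref{l35}, and the rigidity $\chi^0(M)=N^0=1$ of Theorem \ref{t22}. First, reading off Figure \ref{fig5-4}, the only edges connecting fixed points whose indices $n_p$ differ by one are the $c$- and $f$-edges; hence by the first clause of Lemma \ref{l35}, the smallest and second smallest positive weights must label such edges, which forces $a,b,d,e>\max(c,f)$. Plugging the four weight triples into Lemma \ref{l29} and grouping,
\[\frac{1}{abc}-\frac{1}{cde}+\frac{1}{abf}-\frac{1}{def}=\left(\frac{1}{c}+\frac{1}{f}\right)\left(\frac{1}{ab}-\frac{1}{de}\right)=0,\]
so $ab=de$.

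Next, the symmetries $a\leftrightarrow b$ and $d\leftrightarrow e$, together with the reversal exchange $(a,b,c)\leftrightarrow(d,e,f)$ of Remark \ref{r53}, let me assume $a\geq b$, $d\geq e$, and $a\geq d$, so that $a$ is the overall largest weight. Applying the second clause of Lemma \ref{l35} to the $a$-edge gives $\{b,c\}\equiv\{-b,f\}\pmod a$; the pairing $b\equiv f$ is excluded since $b>f$, leaving $2b\equiv 0\pmod a$ and $c\equiv f\pmod a$. Since $0<c,f<a$ and $0<b\leq a$, this forces either $a=2b$ with $c=f$, or the degenerate $a=b$ with $c=f$. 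The same analysis on the $d$-edge, using $c=f$ and $e>c$, gives $d=2e$ (or degenerately $d=e$). Combined with $ab=de$, the main configuration yields $b=e$ and hence $a=d=2b$; the two mixed subcases produce $2b^2=d^2$, which has no integer solution; and the doubly degenerate subcase yields $a=b=d=e$.

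The last step is to compute $\chi^0(M)$. In the main configuration, the identity $1/(1-t^c)+1/(1-t^{-c})=1$ collapses both the $p_0$--$p_1$ and the $p_2$--$p_3$ contributions, producing
\[\chi^0(M)=\frac{1+t^{3b}}{(1-t^{2b})(1-t^b)},\]
which cannot equal $1$ (it would require $t^b+t^{2b}=0$). The doubly degenerate configuration similarly collapses to $(1+t^{2a})/(1-t^a)^2\neq 1$. The main obstacle is tracking the degenerate subcases where weights coincide, but in each one the same collapsing trick yields a nontrivial rational identity that fails. As a cleaner alternative in either configuration, Lemma \ref{l26}(3) applied with $w=b$ (respectively $w=a$) gives an immediate contradiction: every fixed point has $m_p^w=2$ while $m_p^w(-)$ takes only the values $0$ and $2$, violating the required $\{0,1,2\}$ distribution.
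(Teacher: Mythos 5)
Your proof is correct, and it reaches the paper's configuration by a partly different route. The skeleton is shared: both arguments identify $c,f$ as the two smallest positive weights from the multigraph, apply the second clause of Lemma \ref{l35} across the $a$-edge to force $a=2b$ and $c=f$ (and symmetrically $d=2e$), and both can finish with Lemma \ref{l26}(3) applied to $w=b$, observing that all four fixed points have $m_p^b=2$ but $m_p^b(-)\in\{0,2\}$ only. What you do differently is insert the localization identity of Lemma \ref{l29}, which factors as $\bigl(\tfrac{1}{c}+\tfrac{1}{f}\bigr)\bigl(\tfrac{1}{ab}-\tfrac{1}{de}\bigr)=0$ and yields $ab=de$; the paper never uses this. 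That relation buys you two things: it kills the mixed degenerate cases ($a=2b$, $d=e$ and vice versa) by the irrationality of $\sqrt2$, and it pins down $b=e$ directly, whereas the paper instead rules out $a=b$ up front with Lemma \ref{l26}(3) and later extracts $b\mid e$ from a second application of that lemma. Your primary closing step, the telescoping $\frac{1}{1-t^c}+\frac{1}{1-t^{-c}}=1$ in $\chi^0(M)$ giving $\frac{1+t^{3b}}{(1-t^{2b})(1-t^b)}\neq 1$, is also sound and is in the spirit of the paper's Lemma \ref{l53}; your ``cleaner alternative'' is exactly the paper's actual ending. The only point worth making explicit is that $b>c\geq 1$ guarantees $b\geq 2$, so Lemma \ref{l26} (which requires $w>1$) genuinely applies; this is immediate from your first paragraph but deserves a word.
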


\begin{proof}
The only weights whose edge $\epsilon$ satisfy $n_{i(\epsilon)}+1=n_{t(\epsilon)}$ are $c$ and $f$. By Lemma \ref{l35}, it follows that $\{c,f\}$ are the smallest positive weight and the second smallest positive weight, and $a,b,d,e$ are strictly bigger and $c,f$. 


We show that $a \neq b$. For this, assume that $a=b$. Then $m_p^a=2$. Therefore, by (3) of Lemma \ref{l26}, $m_{p_1}^a=2$ or $m_{p_3}^a=2$. This implies that $d$ and $e$ are multiples of $a$. Then we have $m_{p_0}^a(-)=m_{p_1}^a(-)=0$ and $m_{p_1}^a(-)=m_{p_3}^a(-)=2$, which contradicts (3) of Lemma \ref{l26}.

Therefore, $a \neq b$. Without loss of generality, assume that $a>b$. By the second condition of Lemma \ref{l35}, we have
\begin{center}
$\Sigma_{p_0}=\{a,b,c\} \equiv \{-a,-b,f\}=\Sigma_{p_2} \mod a$.
\end{center}
This implies that either
\begin{enumerate}[(a)]
\item $2b=a$ and $c=f$, or
\item $b=f$ and $b+c=a$.
\end{enumerate}
However, $f$ is either the smallest or second smallest positive weight and hence $b \neq f$ by the first condition of Lemma \ref{l35}. Therefore, $2b=a$ and $c=f$.

If we reverse the circle action(Remark \ref{r53}) and apply the same argument to weights $d$ and $e$, we conclude that $d \neq e$. Without loss of generality, assume that $d>e$. As before, it follows that $d=2e$. The multisets of weights are therefore
\begin{center}
$\Sigma_{p_0}=\{2b,b,c\}, \Sigma_{p_1}=\{-c,2e,e\}, \Sigma_{p_2}=\{-2b,-b,c\}, \Sigma_{p_3}=\{-2e,-e,c\}$. 
\end{center}
Since $m_{p_0}^b=2$, by (3) of Lemma \ref{l26}, it follows that $e$ is a multiple of $b$. We then have $m_{p_0}^b(-)=m_{p_1}^b(-)=0$ and $m_{p_1}^a(-)=m_{p_3}^a(-)=2$, which contradicts (3) of Lemma \ref{l26}. \end{proof}

\section{6-dimension with 4 fixed points: the case that $\mathrm{Todd}(M)=0$}

In this section, we classify the fixed point data of a circle action on a 6-dimensional almost complex manifold $M$ with 4 fixed points, when the Todd genus of $M$ is 0. As in the case that $\mathrm{Todd}(M)=1$, we shall consider a multigraph associated to $M$ in the sense of Lemma \ref{l35}, and then classify the fixed point data in each case.

\begin{lem} \label{l61}
Let the circle act effectively on a 6-dimensional compact almost complex manifold with 4 fixed points, whose Todd genus is 0. Then $N^0=N^3=0$ and $N^1=N^2=2$. Let $n_{p_i}=1$ for $i=1,2$ and $n_{p_i}=2$ for $i=3,4$. By permuting $p_1$ and $p_2$ and by permuting $p_3$ and $p_4$ if necessary, there exist positive integers $a,b,c,d,e,f$ so that exactly one of the figures in Figure \ref{fig7} occurs as a multigraph associated to $M$ that satisfies the conditions in Lemma \ref{l35}. Alternatively, exactly one of the following holds for the multisets of weights at $p_i$:
\begin{enumerate}[(1)]
\item $\Sigma_{p_1}=\{-a,b,c\}, \Sigma_{p_2}=\{-c,a,d\}, \Sigma_{p_3}=\{-b,-e,f\}, \Sigma_{p_4}=\{-d,-f,e\}$.
\item  $\Sigma_{p_1}=\{-a,b,c\}, \Sigma_{p_2}=\{-d,a,e\}, \Sigma_{p_3}=\{-b,-e,f\}, \Sigma_{p_4}=\{-c,-f,d\}$.
\item $\Sigma_{p_1}=\{-a,b,c\}, \Sigma_{p_2}=\{-d,a,e\}, \Sigma_{p_3}=\{-b,-c,f\}, \Sigma_{p_4}=\{-e,-f,d\}$.
\item $\Sigma_{p_1}=\{-a,b,c\}, \Sigma_{p_2}=\{-d,e,f\}, \Sigma_{p_3}=\{-b,-c,a\}, \Sigma_{p_4}=\{-e,-f,d\}$.
\item $\Sigma_{p_1}=\{-a,b,c\}, \Sigma_{p_2}=\{-d,e,f\}, \Sigma_{p_3}=\{-b,-f,a\}, \Sigma_{p_4}=\{-c,-e,d\}$.
\item $\Sigma_{p_1}=\{-a,b,c\}, \Sigma_{p_2}=\{-d,e,f\}, \Sigma_{p_3}=\{-e,-f,a\}, \Sigma_{p_4}=\{-b,-c,d\}$.
\end{enumerate}
\end{lem}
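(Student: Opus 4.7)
The plan is to extract the combinatorial skeleton of the fixed point data from the $\chi_y$-genus rigidity, and then enumerate all loopless directed multigraphs compatible with that skeleton modulo the obvious relabeling symmetry.

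First I would apply Theorem \ref{t22} with $i=0$: this gives $N^0 = \chi^0(M) = \mathrm{Todd}(M) = 0$, and combining with the identity $N^i = N^{n-i}$ for $n=3$ yields $N^3 = 0$ and $N^1 = N^2$. Since the total number of fixed points is four, this forces $N^1 = N^2 = 2$, so the fixed points split into two with exactly one negative weight (relabel them $p_1, p_2$) and two with exactly two negative weights (relabel them $p_3, p_4$).

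Next, by Lemma \ref{l35} there is an associated multigraph with no loops. Since each $p_i$ carries three weights, its out-degree equals $3 - n_{p_i}$ and its in-degree equals $n_{p_i}$, so $p_1, p_2$ each have out-degree $2$ and in-degree $1$, while $p_3, p_4$ each have out-degree $1$ and in-degree $2$; in particular there are $6$ edges in total. I would then partition the edges across the blocks $L := \{p_1, p_2\}$ and $U := \{p_3, p_4\}$, writing $\alpha, \beta, \gamma, \delta$ respectively for the numbers of edges from $L$ to $U$, from $U$ to $L$, inside $L$, and inside $U$. Summing out-degrees and in-degrees over each block gives the system $\gamma + \alpha = 4$, $\gamma + \beta = 2$, $\delta + \beta = 2$, $\delta + \alpha = 4$, which forces $\beta = \alpha - 2$ and $\gamma = \delta = 4 - \alpha$. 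Non-negativity then constrains $\alpha \in \{2, 3, 4\}$.

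I would finally enumerate, for each value of $\alpha$, all loopless multigraphs realizing the per-vertex degree constraints, modulo the $\mathbb{Z}_2 \times \mathbb{Z}_2$ action of independently swapping $p_1 \leftrightarrow p_2$ and $p_3 \leftrightarrow p_4$. A direct parameterization of the $L$-to-$U$ edge counts $s_{ij}$ by a single integer, and similarly for the $U$-to-$L$ counts, shows: $\alpha = 2$ yields a single orbit (Case 1 of the lemma); $\alpha = 3$ yields two orbits distinguished by whether the three $L$-to-$U$ edges split as $1+1+1$ or $2+1$ (Cases 2 and 3); and $\alpha = 4$ yields three orbits distinguished by how the $L$-to-$U$ and $U$-to-$L$ edges align across the two halves (Cases 4, 5, 6). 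Assigning a positive integer $a, b, c, d, e, f$ to the weight of each of the six edges in any given multigraph, and then reading off at each vertex the positive labels of outgoing edges together with the negatives of labels of incoming edges, produces exactly the six multisets stated.

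The main obstacle is the orbit enumeration in the last step: one must carefully identify configurations related by the $\mathbb{Z}_2 \times \mathbb{Z}_2$ symmetry so as not to double-count, and verify in particular that the three subcases arising for $\alpha = 4$ are genuinely inequivalent. Note that the further structural content of Lemma \ref{l35} (that edges with the smallest or second-smallest positive weight necessarily flow from lower $n_p$ to higher $n_p$, and that endpoints of larger-weight edges agree modulo the weight) plays no role in this lemma; those constraints enter only in the subsequent lemmas, which exploit them to pin down the explicit integer relations among $a, \dots, f$ in each of the six cases.
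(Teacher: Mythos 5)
Your proposal is correct and follows essentially the same route as the paper: both derive $N^1=N^2=2$ from Theorem \ref{t22} and then enumerate the loopless multigraphs permitted by Lemma \ref{l35} with out-degree $3-n_{p_i}$ and in-degree $n_{p_i}$ at each vertex, up to swapping $p_1\leftrightarrow p_2$ and $p_3\leftrightarrow p_4$. The only difference is organizational: the paper builds the graph by tracing the incoming edges at $p_1$ and then at $p_2$ through a case tree, whereas you first solve the linear system for the block edge-counts (forcing $\alpha\in\{2,3,4\}$) and then list the orbits for each $\alpha$ --- slightly more systematic bookkeeping arriving at the same six cases.
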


\begin{figure}
\begin{subfigure}[b][4cm][s]{.25\textwidth}
\centering
\vfill
\begin{tikzpicture}[state/.style ={circle, draw}]
\node[state] (a) {$p_1$};
\node[state] (b) [right=of a] {$p_2$};
\node[state] (c) [above=of a] {$p_3$};
\node[state] (d) [right=of c] {$p_4$};
\path (a) [->] [bend left =10]edge node[above] {$x$} (b);
\path (b) [->] [bend left =10] edge node [below] {$a$} (a);
\end{tikzpicture}
\vfill
\caption{Case (i)(a)}\label{fig6-1}
\vspace{\baselineskip}
\end{subfigure}\qquad
\begin{subfigure}[b][4cm][s]{.25\textwidth}
\centering
\vfill
\begin{tikzpicture}[state/.style ={circle, draw}]
\node[state] (a) {$p_1$};
\node[state] (b) [right=of a] {$p_2$};
\node[state] (c) [above=of a] {$p_3$};
\node[state] (d) [right=of c] {$p_4$};
\path (b) [->] edge node[above] {$a$} (a);
\path (d) [->] edge node[left] {$x$} (b);
\end{tikzpicture}
\vfill
\caption{Case (i)(b)}\label{fig6-2}
\vspace{\baselineskip}
\end{subfigure}\qquad
\begin{subfigure}[b][4cm][s]{.25\textwidth}
\centering
\vfill
\begin{tikzpicture}[state/.style ={circle, draw}]
\node[state] (a) {$p_1$};
\node[state] (b) [right=of a] {$p_2$};
\node[state] (c) [above=of a] {$p_3$};
\node[state] (d) [right=of c] {$p_4$};
\path (c) [->] edge node[left] {$a$} (a);
\path (d) [->] edge node[right] {$x$} (b);
\end{tikzpicture}
\vfill
\caption{Case (ii)}\label{fig6-3}
\vspace{\baselineskip}
\end{subfigure}\qquad
\caption{Cases in Lemma \ref{l61}.}\label{fig13}
\end{figure}

\begin{figure}
\begin{subfigure}[b][4cm][s]{.25\textwidth}
\centering
\vfill
\begin{tikzpicture}[state/.style ={circle, draw}]
\node[state] (a) {$p_1$};
\node[state] (b) [right=of a] {$p_2$};
\node[state] (c) [above=of a] {$p_3$};
\node[state] (d) [right=of c] {$p_4$};
\path (a) [->] [bend left =10]edge node[above] {$c$} (b);
\path (a) [->] edge node [left] {$b$} (c);
\path (b) [->] [bend left =10] edge node [below] {$a$} (a);
\path (b) [->] edge node [right] {$d$} (d);
\path (d) [->] [bend left =10]edge node [below] {$e$} (c);
\path (c) [->] [bend left =10]edge node [above] {$f$} (d);
\end{tikzpicture}
\vfill
\caption{(1)}\label{fig7-1}
\vspace{\baselineskip}
\end{subfigure}\qquad
\begin{subfigure}[b][4cm][s]{.25\textwidth}
\centering
\vfill
\begin{tikzpicture}[state/.style ={circle, draw}]
\node[state] (a) {$p_1$};
\node[state] (b) [right=of a] {$p_2$};
\node[state] (c) [above=of a] {$p_3$};
\node[state] (d) [right=of c] {$p_4$};
\path (b) [->] edge node[below] {$a$} (a);
\path (a) [->] edge node[pos=.2, left, sloped, rotate=270] {$c$} (d);
\path (a) [->]edge node [left] {$b$} (c);
\path (d) [->] edge node [right] {$d$} (b);
\path (b) [->]edge node[pos=.2, right, sloped, rotate=90] {$e$} (c);
\path (c) [->]edge node [above] {$f$} (d);
\end{tikzpicture}
\vfill
\caption{(2)}\label{fig7-2}
\vspace{\baselineskip}
\end{subfigure}\qquad
\begin{subfigure}[b][4cm][s]{.25\textwidth}
\centering
\vfill
\begin{tikzpicture}[state/.style ={circle, draw}]
\node[state] (a) {$p_1$};
\node[state] (b) [right=of a] {$p_2$};
\node[state] (c) [above=of a] {$p_3$};
\node[state] (d) [right=of c] {$p_4$};
\path (a) [->] [bend right =20]edge node[left] {$c$} (c);
\path (b) [->] edge node[below] {$a$} (a);
\path (a) [->] [bend left =20]edge node[left] {$b$} (c);
\path (d) [->] [bend left =20]edge node[left] {$d$} (b);
\path (b) [->] [bend left =20]edge node[left] {$e$} (d);
\path (c) [->] edge node[above] {$f$} (d);
\end{tikzpicture}
\vfill
\caption{(3)}\label{fig7-3}
\vspace{\baselineskip}
\end{subfigure}\qquad
\begin{subfigure}[b][4cm][s]{.25\textwidth}
\centering
\vfill
\begin{tikzpicture}[state/.style ={circle, draw}]
\node[state] (a) {$p_1$};
\node[state] (b) [right=of a] {$p_2$};
\node[state] (c) [above=of a] {$p_3$};
\node[state] (d) [right=of c] {$p_4$};
\path (a) [->] edge node[left] {$c$} (c);
\path (a) [->] [bend left =30]edge node[left] {$b$} (c);
\path (c) [->] [bend left =30]edge node[right] {$a$} (a);
\path (b) [->] edge node[left] {$f$} (d);
\path (b) [->] [bend left =30]edge node[left] {$e$} (d);
\path (d) [->] [bend left =30]edge node[right] {$d$} (b);
\end{tikzpicture}
\vfill
\caption{(4)}\label{fig7-4}
\vspace{\baselineskip}
\end{subfigure}\qquad
\begin{subfigure}[b][4cm][s]{.25\textwidth}
\centering
\vfill
\begin{tikzpicture}[state/.style ={circle, draw}]
\node[state] (a) {$p_1$};
\node[state] (b) [right=of a] {$p_2$};
\node[state] (c) [above=of a] {$p_3$};
\node[state] (d) [right=of c] {$p_4$};
\path (a) [->] edge node[left] {$b$} (c);
\path (b) [->] edge node[pos=.2, right, sloped, rotate=90] {$f$} (c);
\path (c) [->] [bend right =30]edge node[left] {$a$} (a);
\path (d) [->] edge node[right] {$d$} (b);
\path (b) [->] [bend right =30]edge node[right] {$e$} (d);
\path (a) [->] edge node[pos=.2, right, sloped, rotate=270] {$c$} (d);
\end{tikzpicture}
\vfill
\caption{(5)}\label{fig7-5}
\vspace{\baselineskip}
\end{subfigure}\qquad
\begin{subfigure}[b][4cm][s]{.25\textwidth}
\centering
\vfill
\begin{tikzpicture}[state/.style ={circle, draw}]
\node[state] (a) {$p_1$};
\node[state] (b) [right=of a] {$p_2$};
\node[state] (c) [above=of a] {$p_3$};
\node[state] (d) [right=of c] {$p_4$};
\path (a) [->] [bend left =10]edge node[pos=.1, left, sloped, rotate=270] {$b$} (d);
\path (a) [->] [bend right =10]edge node[pos=.1, right, sloped, rotate=270] {$c$} (d);
\path (c) [->]edge node [left] {$a$} (a);
\path (d) [->] edge node [right] {$d$} (b);
\path (b) [->][bend left =10]edge node[pos=.1, left, sloped, rotate=90] {$e$} (c);
\path (b) [->][bend right =10]edge node[pos=.1, right, sloped, rotate=90] {$f$} (c);
\end{tikzpicture}
\vfill
\caption{(6)}\label{fig7-6}
\vspace{\baselineskip}
\end{subfigure}\qquad
\caption{Completed multigraphs in Lemma \ref{l61}.}\label{fig7}
\end{figure}
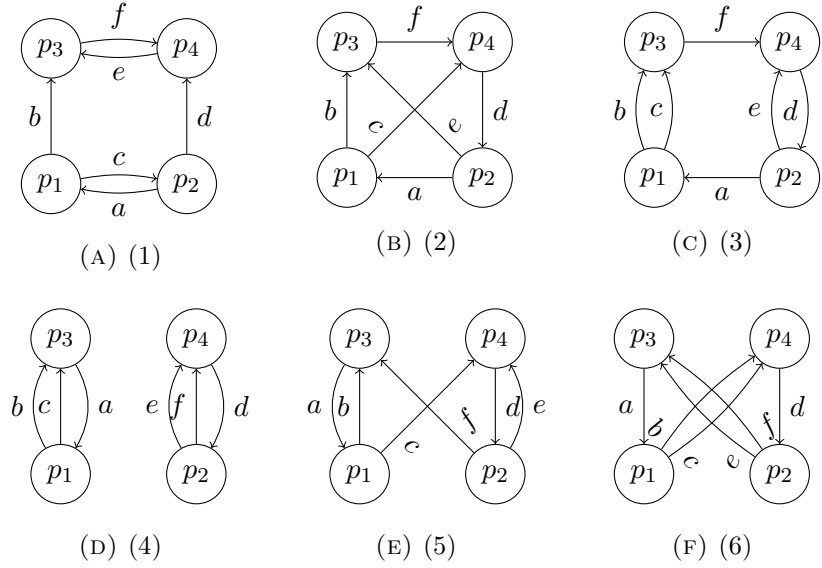

\begin{proof}
By Theorem \ref{t22}, $N^0=N^3$ and $N^1=N^2$. By Lemma \ref{l24}, there exists $i$ such that $N^i\neq0$ and $N^{i+1}\neq0$. Since $\mathrm{Todd}(M)=N^0=0$, it follows that $N^1=2$ and $N^2=2$. Let $p_i$ be fixed points with $n_{p_i}=1$ for $i=1,2$ and $n_{p_i}=2$ for $i=3,4$. 

Next, we consider a multigraph that satisfies conditions as in Lemma \ref{l35}. Let $-a$ be the negative weight at $p_1$ and $\epsilon_a$ its edge, i.e., $\epsilon_a$ is an edge with label (weight) $a$ and $t(\epsilon_a)=p_1$. Since we can associate a multigraph without any loop, by permuting $p_3$ and $p_4$ if necessary, two possibilities occur for $i(\epsilon_a)$:
\begin{enumerate}[(i)]
\item $i(\epsilon_a)=p_2$, i.e., $p_2$ has weight $+a$.
\item $i(\epsilon_a)=p_3$, i.e., $p_3$ has weight $+a$.
\end{enumerate}

First, suppose that $i(\epsilon_a)=p_2$. Next, let $-x$ be the negative weight at $p_2$. Let $\epsilon_x$ be an edge for $x$. By permuting $p_3$ and $p_4$ if necessary, two possibilities occur for $i(\epsilon_x)$.
\begin{enumerate}[(a)]
\item $i(\epsilon_x)=p_1$, i.e., $p_1$ has weight $+x$.
\item $i(\epsilon_x)=p_4$, i.e., $p_4$ has weight $+x$.
\end{enumerate}

Suppose that the case (a) holds. Since there is no loop, by permuting $p_3$ and $p_4$ necessary, the multigraph associated to $M$ must be as Figure \ref{fig7-1}. In this case, the weights are as in (1) after labelling the edges.


Suppose that the case (b) holds. In this case, if $\epsilon_u$ is an edge from $p_3$, it has to terminate at $p_4$. Completing a multigraph in the sense of Lemma \ref{l35}, there are two possibilities for the multigraphs; Figure \ref{fig7-2} and Figure \ref{fig7-3}. The weights in this case are as in (2) and (3), respectively.

Second, suppose that $i(\epsilon_a)=p_3$. Let $-x$ be the negative weight at $p_2$ and $\epsilon_x$ an edge for $x$. Then either $i(\epsilon_x)=p_1$ or $i(\epsilon_x)=p_4$. On the other hand,  the case that $i(\epsilon_x)=p_1$ is the same as the case (1)(b) by the horizontal symmetry if we permute $p_1$ and $p_2$ (and hence $a$ and $x$) and permute $p_3$ and $p_4$. Therefore, suppose that $i(\epsilon_x)=p_4$. Completing a multigraph that satisfies the conditions as in Lemma \ref{l35}, three possibilities occur; Figure \ref{fig7-4}, Figure \ref{fig7-5}, and Figure \ref{fig7-6}. The weights are as in (4), (5), and (6), respectively. \end{proof}

In the next lemma, we prove that as in the case that $\mathrm{Todd}(M)=1$, the maximum dimension of a connected component of $M^{\mathbb{Z}_l}$ is 2, where $l$ is the largest weight.

\begin{lem} \label{l62}
Let the circle act effectively on a 6-dimensional compact almost complex manifold $M$ with 4 fixed points, whose Todd genus is 0. Let $l$ be the largest weight among all the weights. Then $l>1$ and the dimension of a connected component of $M^{\mathbb{Z}_l}$ that contains a fixed point having the weight $l$ is two. In particular, any fixed point cannot have more than one weight that is a multiple of $l$.
\end{lem}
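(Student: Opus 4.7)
The plan is to parallel the strategy used for Lemma~\ref{l52} in the $\mathrm{Todd}(M)=1$ case. First, I would rule out $l=1$ by invoking Lemma~\ref{l27}: if $l=1$ then every weight is $\pm 1$, forcing the total number of fixed points to be $k\cdot 2^{3}=8k$ for some positive integer~$k$, which contradicts the hypothesis of exactly four fixed points. Hence $l>1$. Moreover, effectiveness of the $S^{1}$-action implies that $\mathbb{Z}_{l}$ cannot fix all of $M$, so any connected component $Z\subset M^{\mathbb{Z}_{l}}$ through an $S^{1}$-fixed point $p$ with weight $l$ satisfies $2\le\dim Z\le 4$.

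The heart of the argument is to rule out $\dim Z=4$. Assume it holds. By Lemma~\ref{l28}, $Z$ contains $k\cdot 2^{2}=4k$ $S^{1}$-fixed points, so $k=1$ and \emph{all} four fixed points of $M$ must lie in $Z$, with $N^{i}_{Z}=\binom{2}{i}$ for $0\le i\le 2$. Each fixed point then carries two $\pm l$ weights (coming from $T_{q}Z$) together with one extra weight that is not divisible by $l$. The conditions $N^{0}=N^{3}=0$ and $N^{1}=N^{2}=2$ arising from $\mathrm{Todd}(M)=0$ pin down the signs of these extra weights: the fixed point with two $+l$ weights must have a negative extra weight (else $N^{0}\ne 0$), the fixed point with two $-l$ weights must have a positive extra weight, and the two remaining fixed points must have extra weights of opposite sign (to give $N^{1}=N^{2}=2$). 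After permuting the middle two fixed points if necessary, the multisets of weights are thus $\{l,l,-\alpha\}$, $\{l,-l,\beta\}$, $\{l,-l,-\gamma\}$, $\{-l,-l,\delta\}$ for positive integers $\alpha,\beta,\gamma,\delta<l$ not divisible by~$l$.

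Next, I would extract the relations on $\alpha,\beta,\gamma,\delta$ from two sources. The balance of positive and negative occurrences of each weight (Lemma~\ref{l23}) forces $\{\alpha,\gamma\}=\{\beta,\delta\}$ as multisets, while the fact that all four fixed points lie in the same component of $M^{\mathbb{Z}_{l}}$ (Lemma~\ref{l25}) gives $-\alpha\equiv\beta\equiv-\gamma\equiv\delta\pmod{l}$. Either identification $\{\alpha,\gamma\}=\{\beta,\delta\}$ combined with these congruences leads to $2\alpha\equiv 2\gamma\equiv 0\pmod{l}$; since $l\nmid\alpha,\gamma$, this forces $l$ to be even and $\alpha=\gamma=l/2$, hence $\alpha=\beta=\gamma=\delta=l/2$. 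Effectiveness of the $S^{1}$-action (the gcd of the weights at the fixed point with $n^{Z}=0$ is $l/2$) then yields $l=2$ and $\alpha=\beta=\gamma=\delta=1$.

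The last step is computational. Feeding the resulting weight data $\{2,2,-1\},\{2,-2,1\},\{2,-2,-1\},\{-2,-2,1\}$ into Theorem~\ref{t22} with $i=0$, one rewrites each factor $1-t^{-k}$ as $-t^{-k}(1-t^{k})$ to bring all four localization summands over the common denominator $(1-t^{2})^{2}(1-t)$; the numerators sum to $-t-t^{2}+t^{3}+t^{4}=t(t-1)(t+1)^{2}$, so $\chi^{0}(M)$ simplifies to $-t/(1-t)^{2}$, which is a non-constant rational function and in particular not equal to the integer $\mathrm{Todd}(M)=0$. This contradiction rules out $\dim Z=4$, leaving $\dim Z=2$ as asserted. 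The only mildly delicate point, which I expect to be the main obstacle, is this final bookkeeping: one must bring the four localization contributions to a common denominator correctly and then factor the numerator cleanly enough to see that the result does not vanish.
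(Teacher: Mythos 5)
Your proof is correct and follows the same overall strategy as the paper: rule out $\dim Z=4$ by forcing all four fixed points into one component $Z$ of $M^{\mathbb{Z}_l}$ via Lemma~\ref{l28}, pin down the weights, and contradict the rigidity $\chi^0(M)=N^0=0$ from Theorem~\ref{t22}. Two of your steps differ in detail, and both are sound. First, you exclude $l=1$ via Lemma~\ref{l27} (which forces $8k$ fixed points), whereas the paper reads $l>1$ off the multigraph of Lemma~\ref{l61}; either works. Second, before the localization computation you combine the mod-$l$ congruences of Lemma~\ref{l25} with the weight-balancing Lemma~\ref{l23} and effectiveness to collapse everything to the single case $\{2,2,-1\},\{2,-2,1\},\{2,-2,-1\},\{-2,-2,1\}$, and your evaluation $\chi^0(M)=-t/(1-t)^2\neq 0$ checks out. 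The paper instead keeps the parameters $a,l$ general and tests $\chi^0=0$ directly; your route actually mirrors the paper's own treatment of the analogous Lemma~\ref{l52}, and it sidesteps a small internal inconsistency in the paper's proof of this lemma (the displayed weight multisets $\{a-l,-l,l\}$ and $\{-l,-l,a\}$ do not match the deduction ``$a=c$, $b=d$'' drawn from Lemma~\ref{l25} unless $a=l/2$ — which is exactly the conclusion your use of Lemma~\ref{l23} makes explicit). So your version is, if anything, the cleaner one.
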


\begin{proof} We can associate a multigraph that satisfies the conditions in Lemma \ref{l35}. By Lemma \ref{l61}, one of the figures in Figure \ref{fig7} occurs as a multigraph associated to $M$. Since there exists an edge $\epsilon$ with $n_{i(\epsilon)}+1 \neq n_{t(\epsilon)}$, it follows that $l>1$. Suppose that a fixed point $p$ has the weight $l$. Consider the set $M^{\mathbb{Z}_l}$ of points fixed by the $\mathbb{Z}_l$-action. Let $Z$ be a connected component of $M^{\mathbb{Z}_l}$ that contains $p$. Since $l>1$ and the action is effective, $2 \leq \dim Z \leq 4$. Suppose that $\dim Z=4$. By Lemma \ref{l28}, $N_Z^i={2 \choose i}$. By changing $p_1$ and $p_2$, and $p_3$ and $p_4$ if necessary, this implies that
\begin{center}
$\{l,l\} \subset \Sigma_{p_1}, \{-l,l\} \subset \Sigma_{p_2}, \{-l,l\} \subset \Sigma_{p_3}, \{-l,-l\} \subset \Sigma_{p_4}$.
\end{center}
Let the remaining weight at $p_i$ be $-a,b,-c,d$, respectively, for some positive integers $a,b,c,d<l$. By Lemma \ref{l25}, $\Sigma_{p_i} \equiv \Sigma_{p_j} \mod l$ for any $i,j$. This implies that $a=c$, $b=d$, and $a+b=l$, i.e., the weights are
\begin{center}
$\Sigma_{p_1}=\{-a,l,l\}, \Sigma_{p_2}=\{-l,l-a,l\}, \Sigma_{p_3}=\{a-l,-l,l\}, \Sigma_{p_4}=\{-l,-l,a\}$.
\end{center}
By Theorem \ref{t22}, we have
\begin{center}
$\displaystyle 0=N^0=\frac{1}{(1-t^{-a})(1-t^l)^2}+\frac{1}{(1-t^{-l})(1-t^{l-a})(1-t^l)}+\frac{1}{(1-t^{a-l})(1-t^{-l})(1-t^l)}+\frac{1}{(1-t^{-l})^2(1-t^a)}=\frac{-t^a(1-t^{l-a})-t^l(1-t^a)+t^{2l-a}(1-t^a)+t^{2l}(1-t^{l-a})}{(1-t^a)(1-t^{l-a})(1-t^l)^2}=\frac{-t^a+t^{a+l}+t^{2l-a}-t^{3l-a}}{(1-t^a)(1-t^{l-a})(1-t^l)^2}$.
\end{center}
However, the last expression cannot be a constant, which leads to a contradiction. \end{proof}

The following lemma gives constraints on the weights at the fixed points.

\begin{lemma} \label{l63}
In Lemma \ref{l61}, Let $\Sigma_{p_i}=\{-w_{p_i}^1,w_{p_i}^2,w_{p_i}^3\}$ for $i=1,2$ and $\Sigma_{p_i}=\{-w_{p_i}^1,-w_{p_i}^2,w_{p_i}^3\}$ for $i=3,4$, where $w_{p_i}^j$ are positive integers. Then
\begin{center}
$\min\{w_{p_1}^1,w_{p_2}^1\}=\min\{w_{p_3}^1+w_{p_3}^2,w_{p_4}^1+w_{p_4}^2\}$.
\end{center}
Let $C=\{a,b,c,d,e,f\}$ be the collection of the positive weights and let $\{w_{p_i}^4,w_{p_i}^5,w_{p_i}^6\}=C\setminus\{w_{p_i}^1,w_{p_i}^2,w_{p_i}^3\}$ for each $i$. Then
\begin{center}
$\displaystyle \max_{i=1,2}\{w_{p_i}^1+\sum_{j=4}^6 w_{p_i}^j\}=\max_{i=3,4}\{w_{p_i}^1+w_{p_i}^2+\sum_{j=4}^6 w_{p_i}^j\}$.
\end{center} \end{lemma}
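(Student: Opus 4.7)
The plan is to derive both identities from a single localization identity. By hypothesis $\mathrm{Todd}(M)=0$, so Theorem \ref{t22} with $i=0$ gives
\[
0 = \sum_{p \in M^{S^1}} \frac{1}{\prod_{j=1}^3 (1-t^{w_p^j})}.
\]
For $i=1,2$ the weights at $p_i$ are $\{-w_{p_i}^1, w_{p_i}^2, w_{p_i}^3\}$, so rewriting $(1-t^{-w})^{-1} = -t^{w}/(1-t^w)$ turns the corresponding summand into $-t^{w_{p_i}^1}/\prod_{j=1}^3(1-t^{w_{p_i}^j})$; for $i=3,4$, the summand becomes $t^{w_{p_i}^1+w_{p_i}^2}/\prod_{j=1}^3(1-t^{w_{p_i}^j})$. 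Expanding each factor $(1-t^{w})^{-1}$ as $1+t^w+t^{2w}+\cdots$ presents each summand as a formal power series in $t$ whose smallest exponent is $w_{p_i}^1$ (with coefficient $-1$) for $i=1,2$ and $w_{p_i}^1+w_{p_i}^2$ (with coefficient $+1$) for $i=3,4$.

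To prove the first identity, write $m_1 := \min\{w_{p_1}^1, w_{p_2}^1\}$ and $m_2 := \min\{w_{p_3}^1+w_{p_3}^2, w_{p_4}^1+w_{p_4}^2\}$. If $m_1 < m_2$, then only summands from $i \in \{1,2\}$ with $w_{p_i}^1 = m_1$ can contribute to the $t^{m_1}$-coefficient of the sum, each contributing $-1$; the total coefficient is thus at most $-1$, contradicting that the sum is zero. A symmetric argument rules out $m_1 > m_2$, so $m_1 = m_2$.

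For the second identity, I reverse the circle action (see Remark \ref{r53}), which multiplies every weight by $-1$ and so interchanges the two fixed points having one negative weight with the two fixed points having two negative weights, while preserving $\mathrm{Todd}(M)=0$. In the new labelling, the single negative weight at each new $p_i$ ($i=1,2$) has absolute value equal to some old $w_{p_j}^3$ with $j \in \{3,4\}$, and the two negative weights at each new $p_i$ ($i=3,4$) have absolute values equal to $w_{p_j}^2, w_{p_j}^3$ for some $j \in \{1,2\}$. Applying the first identity to the reversed action gives
\[
\min_{i=3,4}\{w_{p_i}^3\} = \min_{i=1,2}\{w_{p_i}^2 + w_{p_i}^3\}.
\]
Finally, since $\sum_{j=4}^6 w_{p_i}^j = \bigl(\sum_{w\in C} w\bigr) - w_{p_i}^1 - w_{p_i}^2 - w_{p_i}^3$, the left-hand side of the max identity equals $\bigl(\sum_{w\in C} w\bigr) - \min_{i=1,2}(w_{p_i}^2+w_{p_i}^3)$ and the right-hand side equals $\bigl(\sum_{w\in C} w\bigr) - \min_{i=3,4}(w_{p_i}^3)$, so the minimum identity just derived is equivalent to the claimed maximum identity. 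I expect no serious obstacle; the only subtle point is justifying the formal power series comparison, which is routine since each rational function involved is regular at $t=0$, so the coefficient-wise equation at each exponent is valid.
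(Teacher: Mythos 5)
Your proof is correct. For the first (minimum) identity you argue exactly as the paper does: both start from $\chi^0(M)=N^0=0$ in Theorem \ref{t22}, rewrite each summand so that the $i=1,2$ terms carry a factor $-t^{w_{p_i}^1}$ and the $i=3,4$ terms a factor $+t^{w_{p_i}^1+w_{p_i}^2}$, and then compare lowest-order terms (the paper clears denominators by $(1-t^a)\cdots(1-t^f)$ and compares the smallest exponents of the resulting polynomial identity; your formal power-series version is the same computation). For the second (maximum) identity the paper simply compares the largest exponents of that same polynomial identity: the top term of $-t^{w_{p_i}^1}\prod_{j=4}^6(1-t^{w_{p_i}^j})$ is $+t^{w_{p_i}^1+\sum_{j=4}^6 w_{p_i}^j}$ for $i=1,2$, while for $i=3,4$ it is $-t^{w_{p_i}^1+w_{p_i}^2+\sum_{j=4}^6 w_{p_i}^j}$, and these must cancel. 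You instead reverse the circle action (legitimate, and a device the paper itself uses; see Remark \ref{r53}), apply the minimum identity to the reversed action to obtain $\min_{i=3,4}\{w_{p_i}^3\}=\min_{i=1,2}\{w_{p_i}^2+w_{p_i}^3\}$, and translate this into the maximum identity via the fact that $\sum_{j=1}^6 w_{p_i}^j=\sum_{w\in C}w$ for every $i$. The two derivations are two faces of the same symmetry --- substituting $t\mapsto t^{-1}$ in the localization formula exchanges lowest and highest exponents and corresponds to reversing the action --- but yours avoids a separate sign and leading-term analysis at the top end; its only extra ingredient is the (correct) observation that each fixed point is incident to exactly three of the six edge labels, so the complementary sums $\sum_{j=4}^6 w_{p_i}^j$ behave as you claim.
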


\begin{proof}
By Theorem \ref{t22},
\begin{center}
$\displaystyle \chi^0(M)=N^0=0=\sum_{i=1}^2 \frac{1}{(1-t^{-w_{p_i}^1})(1-t^{w_{p_i}^2})(1-t^{w_{p_i}^3})}+\sum_{i=3}^4 \frac{1}{(1-t^{-w_{p_i}^1})(1-t^{-w_{p_i}^2})(1-t^{w_{p_i}^3})}=\sum_{i=1}^2 \frac{-t^{w_{p_i}^1}}{(1-t^{w_{p_i}^1})(1-t^{w_{p_i}^2})(1-t^{w_{p_i}^3})}+\sum_{i=3}^4 \frac{t^{w_{p_i}^1+w_{p_i}^2}}{(1-t^{w_{p_i}^1})(1-t^{w_{p_i}^2})(1-t^{w_{p_i}^3})}$.
\end{center}
Next, in any case of Lemma \ref{l61}, the least common multiple of the denominators is $(1-t^a) \cdots (1-t^f)$. We multiply the equation above by $(1-t^a) \cdots (1-t^f)$ to have
\begin{center}
$0=\sum_{i=1}^2 \{-t^{w_{p_i}^1} \prod_{j=4}^6 (1-t^{w_{p_i}^j})\} + \sum_{i=3}^4 \{t^{w_{p_i}^1+w_{p_i}^2} \prod_{j=4}^6 (1-t^{w_{p_i}^j})\}$.
\end{center}
Comparing terms with the smallest exponents and with different signs, it follows that
\begin{center}
$\min\{w_{p_1}^1,w_{p_2}^1\}=\min\{w_{p_3}^1+w_{p_3}^2,w_{p_4}^1+w_{p_4}^2\}$.
\end{center}
Comparing terms with the biggest exponents and with different signs, it follows that
\begin{center}
$\displaystyle \max_{i=1,2}\{w_{p_i}^1+\sum_{j=4}^6 w_{p_i}^j\}=\max_{i=3,4}\{w_{p_i}^1+w_{p_i}^2+\sum_{j=4}^6 w_{p_i}^j\}$.
\end{center} \end{proof}

\begin{lem} \label{l64} The case (1) in Lemma \ref{l61} cannot hold. \end{lem}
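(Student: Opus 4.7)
My plan is to exploit the structure of Figure \ref{fig7-1}: among the six labelled edges, only $b$ and $d$ satisfy $n_{i(\epsilon)}+1=n_{t(\epsilon)}$. By the contrapositive of the first property of Lemma \ref{l35}, each of $a,c,e,f$ must then strictly exceed the second smallest positive weight, so $\{b,d\}$ coincides with the smallest and second smallest positive weights (counted with multiplicity). Note also that the graph admits the obvious symmetry exchanging $p_1\leftrightarrow p_2$, $p_3\leftrightarrow p_4$ together with the simultaneous relabelling $a\leftrightarrow c$, $b\leftrightarrow d$, $e\leftrightarrow f$, so we may assume $b\le d$.

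First I would apply Lemma \ref{l29} and factor:
\[
0 \;=\; -\frac{1}{abc}-\frac{1}{acd}+\frac{1}{bef}+\frac{1}{def} \;=\; \left(\tfrac{1}{b}+\tfrac{1}{d}\right)\left(\tfrac{1}{ef}-\tfrac{1}{ac}\right),
\]
whence $ef=ac$. Next, the second property of Lemma \ref{l35} applied to the edge labelled $a$ from $p_2$ to $p_1$ (using that $a$ exceeds the second smallest positive weight) gives $\{-c,d\}\equiv\{b,c\}\pmod{a}$; since $0<b,c,d<a$ this forces either (i)~$a=b+c$ and $d=c$, or (ii)~$a=2c$ and $d=b$. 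In case (i), $d=c$ puts $c$ among the two smallest positive weights, contradicting the fact that the edge labelled $c$ is not an $n+1$-edge.

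So case (ii) holds, giving $a=2c$ and $b=d$. Applying Lemma \ref{l35}(2) to the edges labelled $e$ (from $p_4$ to $p_3$) and $f$ (from $p_3$ to $p_4$) yields $-f\equiv f\pmod{e}$ and $-e\equiv e\pmod{f}$, so $e\mid 2f$ and $f\mid 2e$; an elementary $\gcd$ argument then forces $e=f$, $e=2f$, or $f=2e$. The case $e=f$ is excluded because $e^2=ef=ac=2c^2$ has no integer solution, so $\{e,f\}=\{c,2c\}$ in the remaining sub-cases.

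To eliminate this last configuration I would appeal to Lemma \ref{l63}, since Lemma \ref{l29} is actually consistent once $a=2c$, $b=d$, $\{e,f\}=\{c,2c\}$ (the four residues pair up and cancel). The first identity of Lemma \ref{l63} reads $\min\{a,c\}=\min\{b+e,d+f\}$, and substituting the determined values gives $c=b+c$, forcing $b=0$, the desired contradiction. The main (mild) subtlety is therefore noticing that the na\"ive localisation of Lemma \ref{l29} is not strong enough here; the refined $\chi^0$-type relation of Lemma \ref{l63} is what actually closes the case.
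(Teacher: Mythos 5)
Your overall strategy is sound and the second half of your argument is a genuinely different (and valid) finish from the paper's, but there is one unjustified step in the first half. When you apply Lemma \ref{l35}(2) to the edge labelled $a$ and assert ``since $0<b,c,d<a$'', the inequalities $b,d<a$ do follow (because $a$ strictly exceeds the second smallest positive weight, which is $\max\{b,d\}$), but nothing you have established gives $c<a$: a priori $c$ (or $e$ or $f$) could be the largest weight. If $c\ge a$, the congruence $\{-c,d\}\equiv\{b,c\}\pmod a$ admits further solutions (e.g.\ $c\equiv d\pmod a$ with $c=d+ma$, $m\ge 1$, together with $b+d=a$), so your dichotomy ``(i) or (ii)'' is not forced. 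Your normalization $b\le d$ via the horizontal symmetry does not help here. The paper closes exactly this point by using the horizontal symmetry \emph{together with reversing the circle action} (which permutes $\{a,c,e,f\}$ transitively) to assume $a$ is the globally largest weight, and then invoking Lemma \ref{l52}/\ref{l62} to conclude that $c$ and $d$ are not multiples of $a$, hence $c<a$. You would need to add this normalization (or else run the symmetric argument mod $c$ in the case $c\ge a$, including $c=a$) before your case split is legitimate.

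Once that is repaired, the rest of your argument checks out and diverges from the paper in an interesting way: the paper finishes by applying Lemma \ref{l26}(3) to $w=c$ (forcing $e,f$ to be multiples of $c$ and then contradicting the required distribution of $m_p^c(-)$ over the fixed points), whereas you use the localization identity $ef=ac=2c^2$ from Lemma \ref{l29}, the congruences mod $e$ and $f$ to pin down $\{e,f\}=\{c,2c\}$, and then the minimum-exponent identity of Lemma \ref{l63} to force $b=0$. Both finishes work; yours trades the isotropy-submanifold counting of Lemma \ref{l26} for two extra congruence computations plus Lemma \ref{l63}, and your observation that Lemma \ref{l29} alone is consistent with the residual configuration is correct and worth making explicit.
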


\begin{proof} Assume on the contrary that the case (1) in Lemma \ref{l61} holds. Since the only edges $\epsilon_b$ for $b$ and $\epsilon_d$ for $d$ satisfy $n_{i(\epsilon)}+1=n_{t(\epsilon)}$, by the first condition of Lemma \ref{l35}, it follows that $\{b,d\}$ are the smallest and the second smallest weights. In particular, $a,c,e$, and $f$ are strictly bigger than $b$ and $d$. Next, by permuting $p_1$ and $p_2$ and by permuting $p_3$ and $p_4$ (by horizontal symmetry of the multigraph), and by reversing the circle action if necessary (Remark \ref{r53}), we may assume that $a$ is the biggest weight. By Lemma \ref{l62}, it follows that $c<a$. Next, by the second condition of Lemma \ref{l35} for $a$, we have that $\Sigma_{p_1}=\{-a,b,c\} \equiv \{-c,a,d\}=\Sigma_{p_2} \mod a$. Since $c>d$, we have that $2c=a$ and $b=d$. Then since $\Sigma_{p_1}=\{-2c,b,c\}$ and $\Sigma_{p_2}=\{-c,2c,b\}$, we have $m_{p_1}^c=m_{p_2}^c=2$. Therefore, by (3) of Lemma \ref{l26}, $m_{p_3}^c=2$ or $m_{p_4}^c=2$. Since $e,f>b$, this implies that $e$ and $f$ are multiples of $c$, i.e., $e=k_1c$ and $f=k_2c$ for some positive integers $k_1$ and $k_2$. Then we have $m_{p_i}^b(-)=1$ for all $i$. However, this contradicts the second statement of (3) of Lemma \ref{l26}. Therefore, the case (1) in Lemma \ref{l61} cannot hold. \end{proof}

\begin{lem} \label{l65} Assume that the case (2) in Lemma \ref{l61} holds. Then exactly one of the following holds for the multisets of weights at the fixed points:
\begin{enumerate}[(1)]
\item $\Sigma_{p_1}=\{-b-e,e,b\},\Sigma_{p_2}=\{-b-2e,b+e,e\},\Sigma_{p_3}=\{-e,-b-e,b+2e\},\Sigma_{p_4}=\{-b,-e,b+e\}$. This is the fourth case of Theorem \ref{t12}.
\item $\Sigma_{p_1}=\{-3b-c,b,c\},\Sigma_{p_2}=\{-2b-c,3b+c,3b+2c\},\Sigma_{p_3}=\{-b,-b-c,2b+c\},\Sigma_{p_4}=\{-c,-3b-2c,b+c\}$. This is the fifth case of Theorem \ref{t12}.
\item $\Sigma_{p_1}=\{-2b-c,b,c\},\Sigma_{p_2}=\{-b-c,2b+c,c\},\Sigma_{p_3}=\{-b,-c,2b+c\},\Sigma_{p_4}=\{-c,-2b-c,b+c\}$. This is the sixth case of Theorem \ref{t12}.
\end{enumerate} \end{lem}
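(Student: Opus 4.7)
The plan follows the template of Lemmas \ref{l53}--\ref{l56} and Lemma \ref{l64}: first pin down the candidates for the smallest, second smallest, and largest positive weights via Lemmas \ref{l35} and \ref{l62}, then case-split on the largest weight, and in each branch apply the modular congruence of Lemma \ref{l35}(2) together with Lemma \ref{l26} and the localization identity (Theorem \ref{t22}, equivalently Lemma \ref{l63}) to pin down the weights.

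First I inspect Figure \ref{fig7-2} and observe that the only edges $\epsilon$ satisfying $n_{i(\epsilon)}+1=n_{t(\epsilon)}$ are those labelled $b$, $c$, $e$. By Lemma \ref{l35}(1) the smallest and the second smallest positive weights must lie in $\{b,c,e\}$, so $a$, $d$, $f$ are each strictly greater than the second smallest positive weight, and by Lemma \ref{l35}(2) the three congruences
\[
\Sigma_{p_1}\equiv\Sigma_{p_2}\pmod a,\qquad \Sigma_{p_4}\equiv\Sigma_{p_2}\pmod d,\qquad \Sigma_{p_3}\equiv\Sigma_{p_4}\pmod f
\]
hold. The multigraph admits an anti-automorphism obtained by reversing the circle action (Remark \ref{r53}) and relabelling $p_1\leftrightarrow p_3$, $p_2\leftrightarrow p_4$, inducing the involution $a\leftrightarrow f$, $c\leftrightarrow e$, and fixing $b$ and $d$; using this I may assume without loss of generality that the largest weight $l$ lies in $\{a,b,c,d\}$.

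Next, for each choice of $l$ I combine Lemma \ref{l62} (at each fixed point, at most one weight is divisible by $l$) with the appropriate modular congruence to extract pairing equations. For example, when $l=a$ the congruence $\{-a,b,c\}\equiv\{-d,a,e\}\pmod a$ together with the strict bounds from Lemma \ref{l62} forces either $b+d=a$ with $c=e$, or $c+d=a$ with $b=e$. Analogous reductions apply when $l\in\{b,c,d\}$. In each resulting subcase I substitute the pairing into the $\chi^0(M)=0$ identity (Theorem \ref{t22} or Lemma \ref{l63}) to obtain a final scalar relation such as $f=b+c$, $d=2e$, or $a=3b+c$, and then verify that the resulting multiset matches exactly one of the configurations (1), (2), (3) claimed in the lemma. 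Roughly: the branch $l=a$ (equivalently $l=f$) produces the symmetric configuration (3), the branch $l=d$ produces configuration (1), and the branch $l=c$ (equivalently $l=e$) produces configuration (2).

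Spurious branches, in which two of $\{b,c,e\}$ coincide or a ``large'' weight becomes a small multiple of a small one, are eliminated by Lemma \ref{l26}(3): when $m^w_p=2$ occurs, fixed points $q$ with $m^w_q(-)$ taking each of the three values $0,1,2$ must appear, and this sign-distribution requirement rules out the collisions in the same manner as in Lemmas \ref{l54} and \ref{l64}. The principal obstacle is exactly this combinatorial bookkeeping: each of the four candidates for $l$ admits two pairings in its modular equation, giving roughly eight branches, and only three survive to produce configurations (1), (2), (3) while the rest are killed by the multiplicity argument. I expect the branch analysis itself, rather than any single step, to be the longest part of the proof.
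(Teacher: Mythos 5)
Your plan coincides with the paper's proof: the paper likewise identifies $\{b,c,e\}$ as the only candidates for the two smallest weights, uses the reversal involution of Remark \ref{r53} to reduce to the largest weight being one of $a,b,c,d$, and in each branch combines Lemma \ref{l62} with the mod-$l$ congruence of Lemma \ref{l35}(2), the localization identities, and Lemma \ref{l26}(3), arriving at exactly the branch outcomes you predict ($l=a$ gives configuration (3), $l=c$ gives (2), $l=d$ gives (1), and $l=b$ is impossible). The only substantive difference in execution is that the paper also invokes Lemma \ref{l29} (the push-forward of $1$) at several points --- e.g.\ to get $f=a$ in the $l=a$ branch and to kill the numerical subcases there --- whereas you rely on Lemma \ref{l63} and Lemma \ref{l26}(3); note that Lemma \ref{l63} records only the extremal terms of $\chi^0(M)=0$ and is not equivalent to Theorem \ref{t22}, so in those spots you would need either the full identity, Lemma \ref{l29}, or (as works in the $l=a$ branch) the mod-$f$ congruence to pin down the remaining weight.
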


\begin{proof} By reversing the circle action (Remark \ref{r53}), we may assume that exactly one of the following holds for the largest weight:
\begin{enumerate}
\item $a$ is the largest weight.
\item $b$ is the largest weight.
\item $c$ is the largest weight.
\item $d$ is the largest weight.
\end{enumerate}

First, suppose that $a$ is the largest weight. By Lemma \ref{l62}, we have that $b,c,d,e<a$. By Lemma \ref{l35}, $\Sigma_{p_1}=\{-a,b,c\}\equiv\{-d,a,e\}=\Sigma_{p_2} \mod a$. It follows that either
\begin{enumerate}[(a)]
\item $b+d=a$ and $c=e$, or
\item $b=e$ and $c+d=a$.
\end{enumerate}

Assume that the case (a) holds. We have
\begin{center}
$\Sigma_{p_1}=\{-b-d(=-a),b,c\},\Sigma_{p_2}=\{-d,b+d(=a),c\},\Sigma_{p_3}=\{-b,-c(=-e),f\},\Sigma_{p_4}=\{-c(=-e),-f,d\}$.
\end{center}
By Lemma \ref{l29}, we have
\begin{center}
$\displaystyle 0=\frac{1}{-(b+d)bc}+\frac{1}{-d(b+d)c}+\frac{1}{(-b)(-c)f}+\frac{1}{(-c)(-f)d}=(-\frac{1}{b+d}+\frac{1}{f})(\frac{1}{bc}+\frac{1}{cd})$.
\end{center}
It follows that $f=b+d$, i.e., $f=a$. By Lemma \ref{l63}, we have $\max\{b+2d+c+f(=a+d+e+f),d+b+c+f\}=\max\{2b+2c+2d(=b+c+a+d+e),2b+2c+d+f(=c+f+a+b+e)\}$. Subtracting $b+c+d$ from each term, this is equivalent to $\max\{d+f,f\}=\max\{b+c+d,b+c+f\}$. Since $d<f=a$, this implies that $d+f=b+c+f$, i.e., $d=b+c$. It follows that $a=f=b+d=2b+c$. Then the weights are
\begin{center}
$\Sigma_{p_1}=\{-2b-c,b,c\},\Sigma_{p_2}=\{-b-c,2b+c,c\},\Sigma_{p_3}=\{-b,-c,2b+c\},\Sigma_{p_4}=\{-c,-2b-c,b+c\}$.
\end{center}
This is the sixth case of Theorem \ref{t12}.

Assume that the case (b) holds. By Lemma \ref{l63}, we have $\min\{a,a-c(=d)\}=\min\{2b(=b+e),c+f\}$ and $\max\{2a-c+b+f(=a+d+e+f),a+b+f(=d+b+c+f)\}=\max\{2a+2b(=b+e+a+c+d),a+2b+c+f(=c+f+a+b+e)\}$. Since $a-c<a$, it follows that $a-c=\min\{2b(=b+e),c+f\}$. Since $a+b+f<a+2b+c+f$, it follows that $2a+b-c+f=\max\{2a+2b,a+2b+c+f\}$. On the other hand, if $a-c=c+f$, i.e., $a=2c+f$, then $2a+2b>a+2b+c+f$. Therefore, we have the following cases:
\begin{enumerate}[(i)]
\item $a-c=2b$ and $2a+b-c+f=2a+2b$.
\item $a-c=2b$ and $2a+b-c+f=a+2b+c+f$.
\item $a-c=c+f$ and $2a+b-c+f=2a+2b$.
\end{enumerate}

In case (i), we have that $a=2b+c$ and $f=b+c$. Then we have
\begin{center}
$\Sigma_{p_1}=\{-2b-c,b,c\},\Sigma_{p_2}=\{-2b,b,2b+c\},\Sigma_{p_3}=\{-b,-b,b+c\},\Sigma_{p_4}=\{-c,-b-c,2b\}$.
\end{center}
By the second condition of Lemma \ref{l35} for $f=b+c$, $\Sigma_{p_3}=\{-b,-b,b+c\}\equiv\{-c,-b-c,2b\}=\Sigma_{p_4} \mod b+c$. Since we must have $-b \equiv -c \mod b+c$, it follows that $b=c$. Since the action is effective, this implies that $b=1$. Then
\begin{center}
$\Sigma_{p_1}=\{-3,1,1\},\Sigma_{p_2}=\{-2,1,3\},\Sigma_{p_3}=\{-1,-1,2\},\Sigma_{p_4}=\{-1,-2,2\}$.
\end{center}
By Lemma \ref{l29}, we have
\begin{center}
$\displaystyle 0=\frac{1}{-3}+\frac{1}{-6}+\frac{1}{2}+\frac{1}{4} \neq 0$,
\end{center}
which is a contradiction.

Suppose that the case (ii) holds. Then $a=2b+c$ and $a=b+2c$, i.e, $b=c$. Since the action is effective, $c=1$. Then
\begin{center}
$\Sigma_{p_1}=\{-3,1,1\},\Sigma_{p_2}=\{-2,3,1\},\Sigma_{p_3}=\{-1,-1,1\},\Sigma_{p_4}=\{-1,-1,2\}$.
\end{center}
By Lemma \ref{l29}, this case is impossible.

Suppose that the case (iii) holds. From the latter we have $f=b+c$. Then we have
\begin{center}
$\Sigma_{p_1}=\{-b-3c,b,c\},\Sigma_{p_2}=\{-b-2c,b+3c,b\},\Sigma_{p_3}=\{-b,-b,b+c\},\Sigma_{p_4}=\{-c,-b-c,b+2c\}$.
\end{center}
By the second condition of Lemma \ref{l35} for $f=b+c$, $\Sigma_{p_3}=\{-b,-b,b+c\}\equiv\{-c,-b-c,2b\}=\Sigma_{p_4} \mod b+c$. Since we must have $-b \equiv -c \mod b+c$, it follows that $b=c$. Since the action is effective, this implies that $b=1$. Then
\begin{center}
$\Sigma_{p_1}=\{-4,1,1\},\Sigma_{p_2}=\{-3,1,4\},\Sigma_{p_3}=\{-1,-1,2\},\Sigma_{p_4}=\{-1,-2,3\}$.
\end{center}
As above, by Lemma \ref{l29} we get a contradiction.

Second, suppose that $b$ is the largest weight. Since $b,c$, and $e$ are the only weights whose corresponding edges satisfy $n_{i(\epsilon)}+1=n_{t(\epsilon)}$, this implies that $\{c,e\}$ are the smallest and the second smallest positive weights. In particular, $c$ and $e$ are strictly smaller than $a,b,d$, and $f$. By Lemma \ref{l62}, $a,f<b$. By Lemma \ref{l35}, $\Sigma_{p_1}=\{-a,b,c\}\equiv\{-b,-e,f\}=\Sigma_{p_3} \mod b$. Since $e<a$, it follows that $-a \equiv f \mod b$ and $c \equiv -e \mod b$, i.e., $a+f=b$ and $c+e=b$. Then we have $a+f=b=c+e<a+f$, which is a contradiction.

Third, suppose that $c$ is the largest weight. By an analogous argument as above, $\{b,e\}$ are the smallest and the second smallest positive weights, and $b$ and $e$ are strictly smaller than $a,c,d$, and $f$. By Lemma \ref{l62}, $a,b,d,f<c$. By Lemma \ref{l35}, $\Sigma_{p_1}=\{-a,b,c\}\equiv\{-c,-f,d\}=\Sigma_{p_4}\mod c$. Since $b<d$, $b \neq d \mod c$. Therefore, we have $a+d=c$ and $b+f=c$. By Lemma \ref{l63}, $\min\{a,c-a\}=\{b+e,c+(c-b)\}$. Since $b+e<c+(c-b)$, this implies that $\min\{a,c-a\}=b+e$.

Assume that $a=b+e$. With that $a=b+e$, $d=c-a=c-b-e$, and $f=c-b$, the second part of Lemma \ref{l63} implies that $c=2b+3e$. Then we have
\begin{center}
$\Sigma_{p_1}=\{-b-e,b,2b+3e\},\Sigma_{p_2}=\{-b-2e,b+e,e\},\Sigma_{p_3}=\{-b,-e,b+3e\},\Sigma_{p_4}=\{-2b-3e,-b-3e,b+2e\}$.
\end{center}
If we reverse the circle action (Remark \ref{r53}), then we have
\begin{center}
$\Sigma_{p_1}=\{-2b-3c,-b,b+e\},\Sigma_{p_2}=\{-b-e,-e,b+2e\},\Sigma_{p_3}=\{-b-3e,b,e\},\Sigma_{p_4}=\{-b-2e,b+3e,2b+3e\}$.
\end{center}
This is the fifth case of Theorem \ref{t12}.

Next, assume that $c-a=b+e$, i.e., $c=a+b+e$. Then we have
\begin{center}
$\Sigma_{p_1}=\{-a,b,a+b+e\},\Sigma_{p_2}=\{-b-e,a,e\},\Sigma_{p_3}=\{-b,-e,a+e\},\Sigma_{p_4}=\{-a-b-e,-a-e,b+e\}$.
\end{center}
By Lemma \ref{l35}, we have $\Sigma_{p_3}=\{-b,-e,a+e\}\equiv\{-a-b-e,-a-e,b+e\}=\Sigma_{p_4} \mod a+e$. It follows that $-e \equiv b+e \mod a+e$. Since $a>b$, this implies that $-e+(a+e)=b+e$, i.e., $a=b+e$. Then we have $m_{p_2}^{b+e}(-)=1$ for $i=1,2,4$, which contradicts the second statement of (3) of Lemma \ref{l26}.

Fourth, suppose that $d$ is the largest weight. By Lemma \ref{l62}, $a,e,c,f<d$. By reversing the circle action (Remark \ref{r53}), we may assume that $a \leq f$. Next, by Lemma \ref{l63}, $\min\{a,d\}=\min\{b+e,c+f\}$. Since $a \leq f$ and $a<d$, this implies that $a=b+e$. By Lemma \ref{l35}, $\Sigma_{p_2}=\{-d,a,e\}\equiv\{-c,-f,d\}=\Sigma_{p_4} \mod d$. This implies that either
\begin{enumerate}[(a)]
\item $a+c=d$ and $e+f=d$, i.e., $c=d-a$ and $f=d-e$.
\item $a+f=d$ and $e+c=d$, i.e., $f=d-a$ and $e=d-c$.
\end{enumerate}

Assume that the case (a) holds. Since $a+c=d$, $e+f=d$, and $a \leq f$, we have $e \leq c$. With $a=b+e$, $d=a+c=b+c+e$ and $f=d-e=b+c$. We have
\begin{center}
$\Sigma_{p_1}=\{-b-e,b,c\},\Sigma_{p_2}=\{-b-c-e,b+e,e\},\Sigma_{p_3}=\{-b,-e,b+c\},\Sigma_{p_4}=\{-c,-b-c,b+c+e\}$.
\end{center}
By Lemma \ref{l35} for $f=b+c$, $\Sigma_{p_3}=\{-b,-e,b+c\}\equiv\{-c,-b-c,b+c+e\}=\Sigma_{p_4} \mod b+c$. If $-b \equiv -c \mod b+c$ and $-e \equiv b+c+e \mod b+c$, since $e \leq c$, it follows that $b=c=e$. If $-e \equiv -c \mod b+c$, we have $c=e$. In either case we have $c=e$. Then we have
\begin{center}
$\Sigma_{p_1}=\{-b-e,b,e\},\Sigma_{p_2}=\{-b-2e,b+e,e\},\Sigma_{p_3}=\{-b,-e,b+e\},\Sigma_{p_4}=\{-e,-b-e,b+2e\}$.
\end{center}
This is the fourth case of Theorem \ref{t12}.

Next, assume that the case (b) holds. By Lemma \ref{l35}, we have $\Sigma_{p_3}=\{-b,-e,d-b-e(=f)\}\equiv\{e-d(=-c),b+e-d(=-f),d\}=\Sigma_{p_4} \mod d-b-e(=f)$. Note that since $a+f=d$ and $a\leq f$, $f \geq \frac{d}{2}$, where $d$ is the largest weight. Since $-b \equiv e-d \mod d-b-e$ and $d-b-e \equiv b+e-d \mod d-b-e$, we have $-e \equiv d \mod d-b-e$. Since $b+e=a\leq \frac{d}{2}$, $e<\frac{d}{2}$. This implies that $-e+2(d-b-e)=d$, i.e., $d=2b+3e$. Then we have
\begin{center}
$\Sigma_{p_1}=\{-b-e,b,2b+2e\},\Sigma_{p_2}=\{-2b-3e,b+e,e\},\Sigma_{p_3}=\{-b,-e,b+2e\},\Sigma_{p_4}=\{-2b-2e,-b-2e,2b+3e\}$.
\end{center}
By Lemma \ref{l35} for $c=2b+2e$, we have $\Sigma_{p_1}=\{-b-e,b,2b+2e\}\equiv\{-2b-2e,-b-2e,2b+3e\}=\Sigma_{p_4} \mod 2b+2e$, but this cannot hold. \end{proof}

\begin{lem} \label{l66} Assume that the case (3) in Lemma \ref{l61} holds. Then exactly one of the following holds for the multisets of weights at the fixed points:
\begin{enumerate}[(1)]
\item $\Sigma_{p_1}=\{-b-c,b,c\},\Sigma_{p_2}=\{-b-c-e,b+c,e\},\Sigma_{p_3}=\{-b,-e,b+e\},\Sigma_{p_4}=\{-e,-b-c,b+c+e\}$.
\item $\Sigma_{p_1}=\{-2b-c,b,c\},\Sigma_{p_2}=\{-b-c,2b+c,c\},\Sigma_{p_3}=\{-b,-c,2b+c\},\Sigma_{p_4}=\{-c,-2b-c,b+c\}$.
\end{enumerate}
Therefore, this is either the fourth case or the fifth case of Theorem \ref{t12}. \end{lem}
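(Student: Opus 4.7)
The plan is to parallel the analysis of case (2) carried out in Lemma \ref{l65}. First I identify which edges in Figure \ref{fig7-3} satisfy $n_{i(\epsilon)}+1=n_{t(\epsilon)}$: precisely the edges labeled $b$, $c$, and $e$. By Lemma \ref{l35}(1) this forces the smallest and second smallest positive weights to lie in $\{b,c,e\}$; in particular $a$, $d$, and $f$ are each strictly larger than the second smallest positive weight. Lemma \ref{l35}(2) applied to the edges $a$, $d$, $f$ then yields the congruences
\begin{equation*}
\{-d,e\}\equiv\{b,c\}\pmod a,\qquad \{-e,-f\}\equiv\{a,e\}\pmod d,\qquad \{b,c\}\equiv\{e,-d\}\pmod f,
\end{equation*}
each of which branches into two pairings.

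Next I exploit the symmetries of Figure \ref{fig7-3}. The transposition $b\leftrightarrow c$ is a graph automorphism (the two labels sit on parallel edges from $p_1$ to $p_3$), and reversing the circle action followed by the relabeling $p_1\leftrightarrow p_3$, $p_2\leftrightarrow p_4$ (see Remark \ref{r53}) swaps $a\leftrightarrow f$ while fixing the other labels. After these reductions I may assume the overall largest weight $l$ lies in $\{a,b,d,e\}$. In each sub-case Lemma \ref{l62} says that no fixed point has two weights divisible by $l$, so exactly one of the congruences from the previous paragraph pins down the relations among the weights, producing a short list of candidates. I then cut this list down using Lemma \ref{l29} (the localization identity $\sum 1/\prod w_p^i=0$, which typically factors and forces one of two factors to vanish) and Lemma \ref{l63} (the min/max identities coming from $\chi^0(M)=0$). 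Candidates that survive these identities but are still internally inconsistent are eliminated with Lemma \ref{l26}(3), comparing how many fixed points have exactly two weights divisible by some small integer and how their signs distribute. The two sub-cases that survive produce precisely the two families in the statement; a final check that they coincide with the fourth and fifth cases of Theorem \ref{t12} (after matching parameters, and in case (1) after recognizing the enforced identification of two weights coming from Lemma \ref{l23}) completes the proof.

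The main obstacle will be organizing the combinatorial case split: four choices for the largest weight, each splitting into the two pairings of the relevant congruence, and sometimes further splitting after a second congruence is applied to the resulting weights. The most delicate cases are the ones where $l=a$ or $l=d$, because both the $\bmod a$ and $\bmod f$ (resp.\ $\bmod d$) congruences become informative and one has to chase several rounds of substitutions before Lemma \ref{l29} or Lemma \ref{l63} closes the case. Using the $b\leftrightarrow c$ and $a\leftrightarrow f$ symmetries uniformly at the outset, as in the proof of Lemma \ref{l65}, will be essential to keep the bookkeeping manageable.
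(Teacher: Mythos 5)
Your plan follows the paper's proof of this lemma essentially step for step: same identification of the primitive edges $b,c,e$ in Figure \ref{fig7-3}, same use of the $b\leftrightarrow c$ automorphism and of action-reversal (which, as you say, realizes $a\leftrightarrow f$), the same reduction to the largest weight lying in $\{a,b,d,e\}$, and the same closing toolkit (Lemmas \ref{l62}, \ref{l35}, \ref{l29}, \ref{l63}, \ref{l26}). One efficiency remark: the localization identity of Lemma \ref{l29} for this multigraph factors globally as $\bigl(\tfrac{1}{f}-\tfrac{1}{a}\bigr)\bigl(\tfrac{1}{bc}+\tfrac{1}{de}\bigr)=0$, so $f=a$ before any case split; the paper extracts this first, and doing so collapses six unknowns to five and makes the subsequent bookkeeping much lighter than applying Lemma \ref{l29} inside each sub-case.

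There is one concrete soft spot in the plan as written. You record only the three congruences coming from the edges $a$, $d$, $f$ and then say that in each sub-case ``exactly one of the congruences from the previous paragraph pins down the relations.'' But when the largest weight is $b$ or $e$, none of those three congruences is the mod-$l$ congruence, and the listed ones do not suffice: the paper kills $l=b$ using $\Sigma_{p_1}\equiv\Sigma_{p_3}\bmod b$ and kills $l=e$ using $\Sigma_{p_2}\equiv\Sigma_{p_4}\bmod e$ (the latter case genuinely needs it -- the mod-$a$, mod-$d$, mod-$f$ congruences together with Lemmas \ref{l29} and \ref{l63} leave consistent configurations such as $d=f=a$, $a=b+c$, which are only eliminated after the mod-$e$ congruence forces $d\equiv a$ and Lemma \ref{l26}(3) finishes). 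The fix is immediate and already implicit in your setup: Lemma \ref{l35}(2) applies to \emph{any} edge whose label exceeds the second smallest positive weight, in particular to the edge carrying the largest weight itself even when that edge is one of $b,c,e$; just add the mod-$l$ congruence to your list in those sub-cases. With that amendment the plan goes through and reproduces the paper's argument.
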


\begin{proof} By Lemma \ref{l29}, we have
\begin{center}
$\displaystyle 0=\frac{1}{(-a)bc}+\frac{1}{(-d)ae}+\frac{1}{(-b)(-c)f}+\frac{1}{(-e)(-f)d}=(-\frac{1}{a}+\frac{1}{f})(\frac{1}{bc}+\frac{1}{de})$.
\end{center}
It follows that $f=a$. By changing $b$ and $c$, we may assume that one of the following holds for the largest weight:
\begin{enumerate}
\item $a$ is the largest weight.
\item $b$ is the largest weight.
\item $d$ is the largest weight.
\item $e$ is the largest weight.
\end{enumerate}

First, suppose that $a$ is the largest weight. By Lemma \ref{l62}, we have that $b,c,d,e<a$. Next, by the second condition of Lemma \ref{l35} for $a$, $\Sigma_{p_1}=\{-a,b,c\}\equiv\{-d,a,e\}=\Sigma_{p_2} \mod a$. By permuting $b$ and $c$ if necessary, this implies that $b+d=a$ and $c=e$. Next, by Lemma \ref{l63}, $\min\{a,d\}=\min\{b+c,e+f\}$. Since $d<a$ and $b+c<e+f=c+a$, it follows that $d=b+c$. Then the weights are
\begin{center}
$\Sigma_{p_1}=\{-2b-c,b,c\},\Sigma_{p_2}=\{-b-c,2b+c,c\},\Sigma_{p_3}=\{-b,-c,2b+c\},\Sigma_{p_4}=\{-c,-2b-c,b+c\}$.
\end{center}
This is the fifth case of Theorem \ref{t22}.

Second, suppose that $b$ is the largest weight. By Lemma \ref{l62}, it follows that $c,a<b$. Next, the edges $\epsilon$ for $b,c$, and $e$ only satisfy $n_{i(\epsilon)}+1=n_{t(\epsilon)}$. On the other hand, $b$ is the largest weight. It follows that $\{c,e\}$ are the smallest and the second smallest positive weights. In particular, $c$ and $e$ are strictly smaller than $a,b,e$, and $f(=a)$. By the second condition of Lemma \ref{l35} for $b$, we have $\Sigma_{p_1}=\{-a,b,c\}\equiv\{-b,-c,f(=a)\}=\Sigma_{p_3} \mod b$. Since $c<a$, we cannot have $-a \equiv -c \mod b$. Therefore, we must have $-a \equiv a \mod b$ and $c \equiv -c \mod b$. These imply that $2a=2c=b$ and in particular, $a=c$, which is a contradiction. Therefore, $b$ (and hence $c$) cannot be the largest weight.

Third, suppose that $d$ is the largest weight. By Lemma \ref{l62}, we have $a,e<d$. By the second condition of Lemma \ref{l35} for $d$, $\Sigma_{p_2}=\{-d,a,e\} \equiv \{-e,-f(=-a),d\}=\Sigma_{p_4} \mod d$. It follows that either
\begin{enumerate}[(a)]
\item $a+e=d$.
\item $2a=2e=d$.
\end{enumerate}
In either case we have $a+e=d$. Next, by Lemma \ref{l63}, we have $\min\{a,d\}=\min\{b+c,e+f(=e+a)\}$. Since $a<d$ and $a<e+f(=e+a)$, we have $a=b+c$. Then the multisets of weights are
\begin{center}
$\Sigma_{p_1}=\{-b-c,b,c\},\Sigma_{p_2}=\{-b-c-e,b+c,e\},\Sigma_{p_3}=\{-b,-c,b+c\},\Sigma_{p_4}=\{-e,-b-c,b+c+e\}$.
\end{center}
If we let $b=A$, $c=B$, $e=C$, and $b+c=D$, then we have
\begin{center}
$\Sigma_{p_1}=\{-A-B,A,B\},\Sigma_{p_2}=\{-C-D,C,D\},\Sigma_{p_3}=\{-A,-B,A+B\},\Sigma_{p_4}=\{-C,-D,C+D\}$.
\end{center}
This is the fourth case of Theorem \ref{t12}.

Fourth, suppose that $e$ is the largest weight. Since $b,c$, and $e$ only have edges satisfying $n_{i(\epsilon)}+1=n_{t(\epsilon)}$ and $e$ is the largest weight, $\{b,c\}$ are the smallest and the second positive weights. In particular, $b$ and $c$ are strictly smaller than $a,d,e$, and $f$. By Lemma \ref{l62}, it follows that $a,d<e$. By the second condition of Lemma \ref{l35} for $e$, we have $\Sigma_{p_2}=\{-d,a,e\}\equiv\{-e,-f(=-a),d\}=\Sigma_{p_4} \mod e$. Therefore, either
\begin{enumerate}[(a)]
\item $d=f=a$.
\item $2d=e=2a$.
\end{enumerate}
Assume that $d=f=a$. Then we have $m_{p_i}^a=1$ for $i=1,3$ and $m_{p_i}^a=2$ for $i=2,4$, which contradicts (3) of Lemma \ref{l26}. Next, assume that $2d=e=2a$. The weights at $p_2$ are then $\{-d,d,2d\}$. Since the action is effective, this implies that $d=1$. However, $p_2$ cannot have weight $-1$ by the first condition of Lemma \ref{l35}. Therefore, the fourth case cannot hold. \end{proof}

\begin{lem} \label{l67} Assume that the case (4) in Lemma \ref{l61} holds. Then the multisets of weights at the fixed points are $\{-b-c,b,c\},\{-e-f,e,f\},\{-b,-c,b+c\}$, and $\{-e,-f,e+f\}$ for some positive integers $b,c,e$, and $f$, i.e., this is the fourth case of Theorem \ref{t12}. \end{lem}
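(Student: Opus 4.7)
The plan is to combine the Hirzebruch localization identity $\chi^0(M)=N^0=0$ from Theorem~\ref{t22} with the mod-$w$ congruence from Lemma~\ref{l35}(2) applied to the edge labelled $d$.

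First I would invoke $\chi^0(M)=0$ and pair the contributions of $p_1$ with $p_3$ and of $p_2$ with $p_4$, which after simplification yields the identity
\[
\frac{t^{b+c}-t^a}{(1-t^a)(1-t^b)(1-t^c)} + \frac{t^{e+f}-t^d}{(1-t^d)(1-t^e)(1-t^f)} = 0. \quad (\star)
\]
By Lemma~\ref{l63} together with the $\{p_1,p_3\}\leftrightarrow\{p_2,p_4\}$ symmetry of the multigraph, I may assume $a\le d$, so $a=\min\{b+c,e+f\}$. If $a=b+c$, the first fraction in $(\star)$ vanishes, forcing the second to vanish as well, so $d=e+f$ and the lemma is established.

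The remaining task is to rule out $a<b+c$. In that case $a=e+f$, and if $a=d$ then the second fraction of $(\star)$ also vanishes, again forcing $a=b+c$, a contradiction; hence $a<d$. Factoring $-t^a(1-t^{b+c-a})$ and $t^a(1-t^{d-a})$ out of the two numerators of $(\star)$ and clearing denominators yields the polynomial identity
\[
(1-t^{d-a})(1-t^a)(1-t^b)(1-t^c) = (1-t^{b+c-a})(1-t^d)(1-t^e)(1-t^f). \quad (\star\star)
\]
Comparing multiplicities of each cyclotomic root on both sides of $(\star\star)$ forces the multiset equality $\{d-a,a,b,c\}=\{b+c-a,d,e,f\}$. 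Using $e,f<a<d$ and, up to $b\leftrightarrow c$, this pins down $b+c=2a$, $b=d$, $c=2a-d$, and $\{e,f\}=\{d-a,2a-d\}$, with $a<d<2a$.

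The main obstacle is to derive a contradiction from this configuration; for this I would apply Lemma~\ref{l35}(2) to the edge labelled $d$, which in the Case~(4) multigraph (Figure~\ref{fig7-4}) runs from $p_4$ to $p_2$. Since $n_{p_4}+1=3\ne1=n_{p_2}$, the hypothesis of condition~(1) of Lemma~\ref{l35} fails, so $d$ exceeds the second smallest positive weight (indeed $c,e,f<a<d$), and thus $\Sigma_{p_2}\equiv\Sigma_{p_4}\pmod{d}$, i.e.\ $\{e,f\}\equiv\{-e,-f\}\pmod{d}$. Both possible matchings fail: $e\equiv-e$ and $f\equiv-f$ force $e=f=d/2$, whence $a=e+f=d$, contradicting $a<d$; while $e\equiv-f$ and $f\equiv-e$ force $d\mid e+f=a$, contradicting $0<a<d$. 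This contradiction completes the proof.
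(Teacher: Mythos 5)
Your proposal is correct, and it rests on exactly the same two ingredients as the paper's proof: the localization identity $\chi^0(M)=0$ (via Lemma \ref{l63}, giving $a=\min\{b+c,e+f\}$ after the symmetry reduction $a\le d$) and the congruence $\Sigma_{p_2}\equiv\Sigma_{p_4}\pmod{d}$ from the second condition of Lemma \ref{l35}. The only divergence is in the subcase $a=e+f<b+c$: the paper applies that congruence immediately and, since $e,f<e+f\le d$, concludes $d=e+f$ on the spot (either $2e=2f=d$ or $e+f=d$), after which $\chi^0(M)=0$ forces $a=b+c$; you instead first pin down the weights completely via the cyclotomic-factor comparison in $(\star\star)$ and only then invoke the same congruence to reach a contradiction. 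Your detour is valid (the multiset equality of exponents does follow from matching multiplicities of primitive roots of unity, and the case analysis $b=d$, $c=2a-d$, $\{e,f\}=\{d-a,2a-d\}$ is airtight given $e,f<a<d$), but it is strictly more work than necessary.
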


\begin{proof}
By the horizontal symmetry of the multigraph associated, we may assume that $a \leq d$. By Lemma \ref{l63}, we have that $\min\{a,d\}=\min\{b+c,e+f\}$. First, assume that $b+c < e+f$. Then we have $a=b+c$. By Theorem \ref{t22}, we have
\begin{center}
$\displaystyle \chi^0(M)=N^0=0=\frac{1}{(1-t^{-b-c})(1-t^b)(1-t^c)}+\frac{1}{(1-t^{-d})(1-t^e)(1-t^f)}+\frac{1}{(1-t^{-b})(1-t^{-c})(1-t^{b+c})}+\frac{1}{(1-t^{-e})(1-t^{-f})(1-t^d)}=\frac{-t^{b+c}}{(1-t^{b+c})(1-t^b)(1-t^c)}+\frac{-t^d}{(1-t^{d})(1-t^e)(1-t^f)}+\frac{t^{b+c}}{(1-t^{b})(1-t^{c})(1-t^{b+c})}+\frac{t^{e+f}}{(1-t^{e})(1-t^{f})(1-t^d)}=\frac{-t^d+t^{e+f}}{(1-t^{d})(1-t^e)(1-t^f)}$.
\end{center}
It follows that $d=e+f$. This is the fourth case of Theorem \ref{t12}.

Second, assume that $b+c\geq e+f$. Then we have $d\geq a=e+f$. By the second condition of Lemma \ref{l35} for an edge with label $d$, we have $\Sigma_{p_2}=\{-d,e,f\} \equiv \{-e,-f,d\}=\Sigma_{p_4} \mod d$. Since $d \geq e+f$, either $2e=2f=d$, or $e+f=d$. In either case it follows that $d=e+f$. By an analogous argument as above, by Theorem \ref{t22}, it follows that $a=b+c$ if we consider $\chi^0(M)=0$. Again, this is the fourth case of Theorem \ref{t12}. \end{proof}

\begin{lem} \label{l68} Assume that the case (5) in Lemma \ref{l61} holds. Then the multisets of weights at the fixed points are
\begin{center}
$\Sigma_{p_1}=\{-a-b,a,b\},\Sigma_{p_2}=\{-c-d,c,d\},\Sigma_{p_3}=\{-a,-b,a+b\},\Sigma_{p_4}=\{-c,-d,c+d\}$
\end{center}
for some positive integers $a,b,c$, and $d$, i.e., this is the fourth case of Theorem \ref{t12}. \end{lem}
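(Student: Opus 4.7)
The plan is to combine the Todd localization $\chi^0(M)=0$ from Theorem \ref{t22}, the $\min$ identities of Lemma \ref{l63}, and the mod-$w$ congruences of Lemma \ref{l35}(2), using the horizontal symmetry of Figure \ref{fig7-5} together with the reversal symmetry of Remark \ref{r53}.

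First I would invoke Lemma \ref{l63}, which yields
\[
\min\{a,d\}=\min\{b+f,\,c+e\}=\min\{b+c,\,e+f\}.
\]
By the automorphism of Figure \ref{fig7-5} swapping $(p_1,p_2)\leftrightarrow(p_3,p_4)$ and $(a,b,c)\leftrightarrow(d,e,f)$, I may assume $a\le d$, and by comparing which pairs attain each minimum, either $c=f$ or $b=e$ must hold. Reversing the circle action preserves Case $(5)$ while swapping $c\leftrightarrow f$, so WLOG $c=f$; the case $b=e$ is symmetric. Under $c=f$, the minimum identity becomes $a=c+\min\{b,e\}$, and the goal is to force $b\le e$ so that $a=b+c$.

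To exclude $e<b$, I would use Lemma \ref{l35}(2) applied to the edge labelled $a$, whose endpoints $p_3,p_1$ have $n_{p_3}+1\ne n_{p_1}$, so that $a$ exceeds the second-smallest positive weight. This gives $\{-b,-c\}\equiv\{b,c\}\pmod{c+e}$. Either $(c+e)\mid 2c$, which forces $c=e$ and then $c\mid b$; combined with the mod-$d$ congruence at edge $d$ one gets $d=2c$, and then Theorem \ref{t22} applied to $\chi^0(M)=0$ reduces to $t^{(k+1)c}=t^{2c}$ (writing $b=kc$), forcing $k=1$ and contradicting $b>c=e$. Or $(c+e)\mid(b+c)$ with $b+c=k(c+e)$: $k=1$ gives $b=e$ (contradiction), while $k\ge 2$ makes $b>c$, so Lemma \ref{l35}(2) at edge $b$ produces a further mod-$b$ congruence that is inconsistent for every $k\ge 2$. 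Hence $b\le e$, giving $a=b+c$.

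Finally, substituting $c=f$ and $a=b+c$ into $\chi^0(M)=0$ and grouping the pair-contributions $(p_1,p_3)$ and $(p_2,p_4)$ yields
\[
\frac{t^{b+c}-t^a}{(1-t^a)(1-t^b)(1-t^c)}+\frac{t^{c+e}-t^d}{(1-t^d)(1-t^e)(1-t^c)}=0,
\]
whose first summand vanishes by $a=b+c$, forcing the second to vanish and hence $d=c+e$. The weights become $\{-b-c,b,c\},\ \{-c-e,c,e\},\ \{-b,-c,b+c\},\ \{-c,-e,c+e\}$, matching the fourth case of Theorem \ref{t12} after relabeling. The main obstacle is the mod-$(c+e)$ and mod-$b$ bookkeeping in the second step, since $\chi^0(M)=0$ alone admits spurious solutions (for instance $(a,b,c,d,e,f)=(3,4,2,4,1,2)$) that are excluded only by the Lemma \ref{l35}(2) hypothesis on the Figure \ref{fig7-5} multigraph.
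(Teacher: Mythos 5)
Your reduction to ``$c=f$ or $b=e$'' can be made to work: Lemma \ref{l63} itself only gives $\min\{a,d\}=\min\{b+f,c+e\}$ for Case (5), but applying it to the reversed action (which, as you note, realizes Case (5) again with $c$ and $f$ exchanged) yields $\min\{a,d\}=\min\{b+c,e+f\}$, and since $(b+f)+(c+e)=(b+c)+(e+f)$ the two pairs must then coincide as multisets. The genuine gap is the next sentence: the case $b=e$ is \emph{not} symmetric to the case $c=f$. The only symmetries available are the horizontal automorphism of Figure \ref{fig7-5}, acting on labels as $(a\,d)(b\,e)(c\,f)$, and the reversal of the action, acting as $(c\,f)$; these generate a Klein four-group that preserves each block $\{b,e\}$ and $\{c,f\}$ setwise, so it carries the condition $b=e$ to itself, never to $c=f$. (Intrinsically, $b$ and $e$ are the edges forming $2$-cycles with the return edges $a$ and $d$, while $c$ and $f$ are the cross edges $p_1\to p_4$ and $p_2\to p_3$; no relabeling exchanges these roles.) Hence the alternative $b=e$ with $c\ne f$ needs its own argument and gets none: there the clean pairing of $(p_1,p_3)$ and $(p_2,p_4)$ in $\chi^0(M)=0$ fails because the denominators involve $c$ and $f$ separately, and the mod-$a$ congruence with $a=b+\min\{c,f\}$ only yields divisibilities like $(b+c)\mid(b+f)$, leaving infinitely many candidates to eliminate. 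As written the proof omits this case entirely.

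The half you do carry out, $c=f$, is correct and takes a genuinely different route from the paper, which instead runs a case analysis on which of $a,b,c$ is the largest weight (after the horizontal symmetry), using Lemma \ref{l62}, the mod-(largest weight) congruence, and Lemma \ref{l26}(3). Your exclusion of $e<b$ via the mod-$(c+e)$, mod-$d$ and mod-$b$ congruences checks out in detail -- including on your example $(3,4,2,4,1,2)$, which really does satisfy $\chi^0(M)=0$ and is killed only by the mod-$b$ congruence -- and the final grouping of $(p_1,p_3)$ and $(p_2,p_4)$ in $\chi^0(M)=0$ gives $d=c+e$ immediately. With the missing $b=e$ case supplied, this would be a complete and arguably shorter proof; without it, it is not.
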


\begin{proof} By horizontal symmetry of the multigraph associated, we may assume that one of the following holds for the largest weight:
\begin{enumerate}
\item $a$ is the largest weight.
\item $b$ is the largest weight.
\item $c$ is the largest weight.
\end{enumerate}

First, suppose that $a$ is the largest weight. By Lemma \ref{l62}, it follows that $b,c,f<a$. By the second condition of Lemma \ref{l35} for $a$, $\Sigma_{p_1}=\{-a,b,c\}\equiv\{-b,-f,a\}=\Sigma_{p_3}\mod a$. Therefore, either
\begin{enumerate}[(a)]
\item $2b=a$ and $c+f=a$, or
\item $b+f=a$ and $b+c=a$.
\end{enumerate}

Assume that the case (a) holds. Suppose that $b>1$ and $b \neq c$. Then we have $m_{p_1}^b=m_{p_3}^b=2$. Therefore, by (3) of Lemma \ref{l26}, we must have $m_{p_2}^b=2$ or $m_{p_4}^b=2$. Since $b \neq c$, $f=a-c=2a-b \neq b$. This implies that $d$ and $e$ must by divisible by $b$, say $d=k_1 b$ and $e=k_2 b$ for some positive integers $k_i$, $i=1,2$. Then we have $m_{p_i}^b(-)=1$ for all $i$, which contradicts the second statement of (3) of Lemma \ref{l26}. Therefore, either $b=1$ or $b=c$. If $b=1$, then $a=2$ and hence $c=f=1$ from $c+f=a$. When $b=c$, by the effectiveness of the action, since $p_1$ has weights $\{-2b(=-a),b,b(=c)\}$, we have $b=1$. In either case we have $b=c=f=1$. Since the largest weight is 2, we have $f=1$. By Theorem \ref{t22}, we have
\begin{center}
$\displaystyle 0=\chi^0(M)=\frac{1}{(1-t^{-2})(1-t)^2}+\frac{1}{(1-t^{-d})(1-t)(1-t^e)}+\frac{1}{(1-t^{-1})^2 (1-t^{2})}+\frac{1}{(1-t^{-e})(1-t^{-1})(1-t^d)}=\frac{-t^2}{(1-t^{2})(1-t)^2}+\frac{-t^d}{(1-t^{d})(1-t)(1-t^e)}+\frac{t^2}{(1-t^{1})^2 (1-t^{2})}+\frac{t^{1+e}}{(1-t^{e})(1-t^{1})(1-t^d)}=\frac{-t^d+t^{1+e}}{(1-t^{e})(1-t^{1})(1-t^d)}$.
\end{center}
It follows that $d=e+1$. Since the largest weight is 2, this implies that $e=1$. The weights at the fixed points are
\begin{center}
$\Sigma_{p_1}=\{-2,1,1\},\Sigma_{p_2}=\{-2,1,1\},\Sigma_{p_3}=\{-1,-1,2\},\Sigma_{p_4}=\{-1,-1,2\}$.
\end{center}
This is the fourth case of Theorem \ref{t12}.

Assume that the case (b) holds. Then we have $c=f=a-b$. Since $a$ is the largest weight, $d \leq a$. Suppose that $d=a$. Since $d=a$ is the largest weight, by Lemma \ref{l62}, it follows that $e,a-b<d(=a)$. Then by the second condition of Lemma \ref{l35} for $d(=a)$, we have $\Sigma_{p_2}=\{-a(=-d),e,a-b(=f)\}\equiv\{b-a(=-c),-e,a(=d)\} \mod a(=d)$. Then either
\begin{enumerate}[(a)]
\item $e \equiv b-a \mod a$, i.e., $e-a=b-a$ and hence $b=e$.
\item $2e=a$ and $a-b \equiv b-a \mod a$, i.e., $2b=a$.
\end{enumerate} 
In either case we have that $b=e$. the weights are
\begin{center}
$\Sigma_{p_1}=\{-a,b,a-b\},\Sigma_{p_2}=\{-a,b,a-b\},\Sigma_{p_3}=\{b-a,-b,a\},\Sigma_{p_4}=\{b-a,-b,a\}$.
\end{center}
This is the fourth case of Theorem \ref{t12}. Next, suppose that $d<a$. By Lemma \ref{l63}, we have $\min\{a,d\}=\min\{b+f=b+(a-b)=a,e+f=e+(a-b)$. This implies that $d=e+(a-b)$. Then the weights are
\begin{center}
$\Sigma_{p_1}=\{-a,b,a-b\},\Sigma_{p_2}=\{-a+b-e,e,a-b\},\Sigma_{p_3}=\{b-a,-b,a\},\Sigma_{p_4}=\{-e,b-a,a-b+e\}$.
\end{center}
If we let $A=a-b$, $B=b$, $C=e$, and $D=a-b$, then we have
\begin{center}
$\Sigma_{p_1}=\{-A-B,A,B\},\Sigma_{p_2}=\{-C-D,C,D\},\Sigma_{p_3}=\{-A,-B,A+B\},\Sigma_{p_4}=\{-C,-D,C+D\}$.
\end{center}
This is the fourth case of Theorem \ref{t12}.

Second, suppose that $b$ is the largest weight. By Lemma \ref{l62} and Lemma \ref{l35} for $b$, from $\Sigma_{p_1}=\{-a,b,c\}\equiv\{-b,-f,a\}=\Sigma_{p_3} \mod b$, it follows that either
\begin{enumerate}[(a)]
\item $a=f$ and $c=a$, or
\item $2a=b$ and $c+f=b$.
\end{enumerate}
On the other hand, since only $b,c,e,f$ are the weights whose edges satisfy $n_{i(\epsilon)}+1=n_{t(\epsilon)}$ and $b$ is the largest weight, one of $c$ and $f$ is at most the second smallest positive weight. On the other hand, $a$ is strictly bigger than the second smallest positive weight since its edge $\epsilon_a$ satisfies $n_{i(\epsilon_a)}-1=n_{t(\epsilon_a)}$. This implies that $a$ is strictly bigger than at least one of $c$ and $f$. This implies that the case (a) is impossible. This also implies that when the case (b) holds, $c \neq a$ or $f \neq a$, and hence $c \neq a$ and $f \neq a$. Next, by reversing the circle action (Remark \ref{r53}), we may assume that $c \leq f$. Since $c+f=2a=b$ and $c \neq a$, this implies that $c < a=\frac{b}{2} <f=2a-c$. Next, by Lemma \ref{l63}, we have $\min\{a,d\}=\min\{b+f=2a+(2a-c),c+e\}$ and $\max\{a+d+e+f,a+b+c+d\}=\max\{b+c+d+e+f,a+b+c+e+f\}$. Since $a<b+f=2a+(2a-c)$, it follows that either $a=c+e$ or $d=c+e$. If $a=c+e$, we have $\max\{2c+d+4e,4c+d+3e\}=\max\{4c+d+5e,5c+6e\}$, which cannot hold. If $d=c+e$, we have $\max\{3a+2e,3a+2c+e\}=\max\{4a+c+2e,5a+e\}$ and hence $3a+2c+e=\max\{4a+c+2e,5a+e\}$. If $3a+2c+e=4a+c+2e$, we have $a<a+e=c<a$, which is a contradiction. If $3a+2c+e=5a+e$, we have $a=c<a$, which is a contradiction. Therefore, $b$ cannot be the largest weight.

Third, suppose that $c$ is the largest weight. Since the weights $b,c,e,f$ only have edges satisfying $n_{i(\epsilon)}+1=n_{t(\epsilon)}$ and $c$ is the largest weight, one of $b$ or $e$ is at most the second smallest positive weight. In particular, $a$ and $d$ are strictly bigger than the second smallest positive weight. By Lemma \ref{l62} and Lemma \ref{l35} for $b$, from $\Sigma_{p_1}=\{-a,b,c\}\equiv\{-c,-e,d\}=\Sigma_{p_3} \mod c$, it follows that either
\begin{enumerate}[(a)]
\item $a=e$ and $b=d$, or
\item $b+e=c$ and $a+d=c$.
\end{enumerate}
However, the case (a) cannot hold because of the discussion above. Therefore, the case (b) holds, and $e=c-b$ and $d=c-a$. Next, by Lemma \ref{l63}, $\min\{a,d=c-a\}=\min\{b+f,c+(c-b)=c+e\}$. Since $a<c$, this implies that either $a=b+f$ or $c-a=b+f$. Moreover, Lemma \ref{l63} says that $\max\{a+(c-a)+(c-b)+f,(c-a)+a+b+c\}=\max\{b+f+c+(c-a)+(c-b),c+(c-b)+a+b+f\}$, i.e., $\max\{2c-b+f,2c+b\}=\max\{3c-a+f,2c+a+f\}$. Since $2c-b+f<2c+a+f$, we have $2c+b=\max\{3c-a+f,2c+a+f\}$. From the minimum assume that $c-a=b+f$, i.e., $c=a+b+f$. Then from the maximum we have $2a+3b+2f=\max\{2a+3b+4f,3a+2b+3f\}$, which cannot hold. The equation $2c+b=\max\{3c-a+f,2c+a+f\}$ cannot also be satisfied when $a=b+f$. Therefore, $c$ cannot be the largest weight. \end{proof}

\begin{lem} \label{l69} Assume that the case (6) in Lemma \ref{l61} holds. Then the multisets of weights at the fixed points are
\begin{center}
$\Sigma_{p_1}=\{-b-c,b,c\},\Sigma_{p_2}=\{-b-c,b,c\},\Sigma_{p_3}=\{-b,-c,b+c\},\Sigma_{p_4}=\{-b,-c,b+c\}$
\end{center}
for some positive integers $b$ and $c$, i.e., this is the fourth case of Theorem \ref{t12}. \end{lem}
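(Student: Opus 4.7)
The plan is to determine the largest weight among $a,b,c,d,e,f$ and apply the standard toolkit of Section 6. A direct application of Lemma \ref{l29} yields
\[
-\frac{1}{abc}-\frac{1}{def}+\frac{1}{efa}+\frac{1}{bcd}=\frac{(a-d)(ef-bc)}{abcdef}=0,
\]
so either $a=d$ or $ef=bc$. From Figure \ref{fig7-6}, only the edges labeled $b,c,e,f$ satisfy $n_{i(\epsilon)}+1=n_{t(\epsilon)}$, so by Lemma \ref{l35}(1) the two smallest positive weights lie in $\{b,c,e,f\}$ and both $a$ and $d$ strictly exceed the second smallest positive weight; hence Lemma \ref{l35}(2) applies to edges labeled $a$ and $d$. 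Using the horizontal symmetry of the multigraph ($p_1\leftrightarrow p_2$ and $p_3\leftrightarrow p_4$, swapping $(a,b,c)\leftrightarrow(d,e,f)$), the $b\leftrightarrow c$ and $e\leftrightarrow f$ symmetries, and reversal of the circle action (Remark \ref{r53}), I may assume the largest weight is either $a$ or $b$.

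If $a$ is the largest weight, Lemma \ref{l62} forces $b,c,e,f<a$ and $d\leq a$. Lemma \ref{l35}(2) for the label $a$ gives $\{-e,-f\}\equiv\{b,c\}\pmod{a}$, and since each pairwise sum lies strictly between $0$ and $2a$, I may take (after swapping $b\leftrightarrow c$ if needed) $e=a-b$ and $f=a-c$. Substituting, $ef-bc=a(a-b-c)$, so the identity above collapses to $(a-d)(a-b-c)=0$. Independently, Lemma \ref{l63} gives $\min\{a,d\}=\min\{e+f,b+c\}=\min\{2a-b-c,b+c\}$; combined with $d\leq a$, either sub-case forces $a=d=b+c$, whence $e=c$ and $f=b$, which is the desired data.

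If $b$ is the largest weight (WLOG $b\geq c$), Lemma \ref{l62} forces $a,c,d<b$, and Lemma \ref{l35}(2) for $b$ reduces $\{-a,c\}\equiv\{-c,d\}\pmod{b}$ to two sub-cases: (I) $a=c=d$, or (II) $a+d=b$ and $2c=b$. In (I), Lemma \ref{l63} forces $e+f=a$; if $a\geq 2$, Lemma \ref{l35}(2) for the label $d=a$ requires one of $e,f$ to be a positive multiple of $a$, contradicting $e,f<a$, while $a=1$ is incompatible with $e+f=1$. In (II), I may assume $a\neq d$ (otherwise (II) collapses to (I)); then $ef=bc=b^2/2$, and necessarily $c=b/2\geq 2$ (as $b=2$ would force $a=d=1$). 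Applying Lemma \ref{l26}(3) with $w=c$: since $a\neq c$ and $d\neq c$, neither $a$ nor $-d$ is a multiple of $c$, so $p_1$ has $m_p^c=2$ with $m_p^c(-)=0$ and $p_4$ has $m_p^c=2$ with $m_p^c(-)=2$; any of $p_2,p_3$ with $m_p^c=2$ must have both $e,f$ multiples of $c$, forcing $m_p^c(-)\in\{0,2\}$ there too. No fixed point supplies $m_p^c(-)=1$, contradicting Lemma \ref{l26}(3).

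The main obstacle is Case B, sub-case (II): the numerical constraints $a+d=b$, $2c=b$, $ef=bc$ are consistent with Lemmas \ref{l29}, \ref{l35}, and \ref{l63} on their own, and must be ruled out using the finer distributional content of Lemma \ref{l26}(3) applied at $w=c=b/2$.
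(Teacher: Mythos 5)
Your proof is correct, and while it runs on the same skeleton as the paper's (reduce by the symmetries to the largest weight being $a$ or $b$, then apply Lemma \ref{l62}, Lemma \ref{l35}, Lemma \ref{l63}, and Lemma \ref{l26}), it diverges in a genuinely useful way at the heaviest step. In the main case ($a$ largest), the paper normalizes $b+c\le e+f$ by reversing the action, extracts $d=b+c$ from Lemma \ref{l63}, and then obtains $a=b+c$ by expanding $\chi^0(M)=0$ into a sixteen-term polynomial identity; you instead invoke Lemma \ref{l29} at the outset, factor it as $(a-d)(ef-bc)=0$, note that the mod-$a$ congruence $e=a-b$, $f=a-c$ turns $ef-bc$ into $a(a-b-c)$, and close both branches with the minimum part of Lemma \ref{l63} together with $d\le a$ --- this eliminates the paper's most computational step. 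In the sub-case $a=c=d$ you replace the paper's $m_p^a$-counting argument by Lemma \ref{l63} (forcing $e+f=a$) plus the mod-$a$ congruence along the $d$-edge, which is equally valid; in the sub-case $a+d=b$, $2c=b$ your argument is essentially the paper's Lemma \ref{l26}(3) count at $w=c$, except that you justify that $a$ and $d$ are not multiples of $c$ via the reduction $a\neq d$ rather than via effectiveness, which is arguably cleaner (the one step worth spelling out is that $a=c$ together with $a+d=2c$ would give $d=c=a$, contradicting $a\neq d$, and that $0<a,d<2c$ then rules out any other multiple of $c$).
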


\begin{proof} By horizontal symmetry of the multigraph(see Figure \ref{fig7-6}), we may assume that one of the following holds for the largest weight:
\begin{enumerate}
\item $a$ is the largest weight.
\item $b$ is the largest weight.
\end{enumerate}

First, suppose that $a$ is the largest weight. By Lemma \ref{l62}, we have that $b,c,e,f<a$. By the second condition of Lemma \ref{l35} for $a$, $\Sigma_{p_1}=\{-a,b,c\}\equiv\{-e,-f,a\}=\Sigma_{p_3}\mod a$. By permuting $b$ and $c$ and by permuting $e$ and $f$ necessary, we may assume that $b+e=a$ and $c+f=a$, i.e., $e=a-b$ and $f=a-c$. Moreover, by reversing the circle action (Remark \ref{r53}), we may assume that $b+c \leq e+f$, i.e., $b+c \leq e+f=(a-b)+(a-c)$. It follows that $b+c \leq a$. Next, by Lemma \ref{l63}, we have $\min\{a,d\}=\min\{e+f=(a-b)+(a-c),b+c\}$. Since $d \leq a$ and $b+c \leq (a-b)+(a-c)$, from the minimum it follows that $d=b+c$. Next, as in Lemma \ref{l63}, we consider $\chi^0(M)=0$ in Theorem \ref{t22}, make the exponents in the denominators positive, multiply the equation by the least common multiple of the denominators $(1-t^a)(1-t^b)(1-t^c)(1-t^{b+c})(1-t^{a-b})(1-t^{a-c})$, and cancel out terms to have
\begin{center}
$0=-t^a+t^{a+b+c}-t^{3a-b-c}+t^{2b+c}+t^{b+2c}-t^{a+2b+c}-t^{a+b+2c}-t^{2b+2c}+t^{a+2b+2c}+t^{2a-b-c}-t^{2a+b+c}-t^{a+c}-t^{a+b}+t^{2a+c}+t^{2a+b}+t^{2a}$.
\end{center}
Since $b+c \leq a$, comparing the terms with smallest exponents and with different signs, we must have $a=b+c$. Then the multisets of weights at the fixed points are
\begin{center}
$\Sigma_{p_1}=\{-b-c,b,c\},\Sigma_{p_2}=\{-b-c,b,c\},\Sigma_{p_3}=\{-b,-c,b+c\},\Sigma_{p_4}=\{-b,-c,b+c\}$.
\end{center}
This is the fourth case of Theorem \ref{t12}.

Second, suppose that $b$ is the largest weight. By Lemma \ref{l62}, $a,c,d<b$. By the second condition of Lemma \ref{l35} for $b$, $\Sigma_{p_1}=\{-a,b,c\}\equiv\{-b,-c,d\}=\Sigma_{p_4} \mod b$. It follows that either
\begin{enumerate}[(a)]
\item $a=c$ and $c=d$, or
\item $a+d=b$ and $2c=b$.
\end{enumerate}

Assume that the case (a) holds. Since the edge $\epsilon_a$ for $a$ has $n_{i(\epsilon_a)}-1=n_{t(\epsilon_a)}$, by Lemma \ref{l35}, $a$ is strictly bigger than the second smallest positive weight. In particular, $a>1$. Moreover, since $c=a$ and $d=a$, we have $m_{p_1}^a=2$ and $m_{p_4}^a=2$. By (3) of Lemma \ref{l26}, it follows that $m_{p_2}^a=2$ or $m_{p_3}^a=2$. This implies that exactly one of $e$ and $f$ is a multiple of $a$. Then we have $m_{p_i}^a(-)=1$ for all $i$, which contradicts the second statement of (3) of Lemma \ref{l26}.

Assume that the case (b) holds. Then we have $\Sigma_{p_1}=\{-a,2c(=b),c\}$. If $c=1$, then the largest weight is $b=2$ and hence $a=1$, but the edge for $a$ has $n_{i(\epsilon_a)}-1=n_{t(\epsilon_a)}$, which does not satisfy the first condition of Lemma \ref{l35}. It follows that $c>1$. Since the action is effective, this implies that $a$ (and hence $d=2c-a$) cannot be a multiple of $c$. Then we have $m_{p_1}^c=2$ and $m_{p_4}^c=2$. By (3) of Lemma \ref{l26}, we must have $m_{p_2}^c=2$ or $m_{p_3}^c=2$. It follows that both $e$ and $f$ are multiples of $c$. Then we have $m_{p_i}^c(-)=0$ for $i=1,2$ and $m_{p_i}^c=2$ for $i=3,4$. In particular, there is no fixed point $p$ with $m_p^c(-)=1$ and this contradicts the second statement of (3) of Lemma \ref{l26}. Therefore, $b$ (and hence $c$, $e$, and $f$) cannot be the largest weight. \end{proof}

\section{Proof of Theorem \ref{t12}}

\begin{proof} [Proof of Theorem \ref{t12}] Quotienting out by the subgroup that acts trivially, we may assume that the action is effective. By Theorem \ref{t22}, $N^0=N^3$ and $N^1=N^2$, where $N^i$ is the number of fixed points that has exactly $i$ negative weights. By Lemma \ref{l24}, there exists $i$ such that $N^i \neq 0$ and $N^{i+1} \neq 0$. These imply that either
\begin{enumerate}
\item $N^i=1$ for $0 \leq i \leq 3$.
\item $N^1=N^2=2$ and $N^i=0$ otherwise.
\end{enumerate}
Note that $N^0=\mathrm{Todd}(M)$ is the Todd genus of $M$. In the first case, the Todd of $M$ is 1 and Theorem \ref{t12} follows from Lemma \ref{l51}, Lemma \ref{l53}, Lemma \ref{l54}, Lemma \ref{l55}, and Lemma \ref{l56}. In the second case, the Todd genus of $M$ is 0 and Theorem \ref{t12} follows from Lemma \ref{l61}, Lemma \ref{l64}, Lemma \ref{l65}, Lemma \ref{l66}, Lemma \ref{l67}, Lemma \ref{l68}, and Lemma \ref{l69}. \end{proof}

\end{document}